\numberwithin{equation}{section}
\theoremstyle{plain}
\newtheorem{theorem}[equation]{Theorem}
\newtheorem*{theorem*}{Theorem}
\newtheorem{lemma}[equation]{Lemma}
\newtheorem{proposition}[equation]{Proposition}
\newtheorem{corollary}[equation]{Corollary}
\theoremstyle{definition}
\newtheorem{example}[equation]{Example}
\newtheorem{definition}[equation]{Definition}
\newcommand{\cg}[1]{\langle #1 \rangle}
\newcommand{\C}{\mathbb{C}}             
\renewcommand{\H}{\mathcal{H}}
\newcommand{\id}{\operatorname{id}}        
\newcommand{\Q}{\mathbb{Q}}                 
\newcommand{\R}{\mathbb{R}}                
\newcommand{\Z}{\mathbb{Z}}                
\DeclareMathOperator{\twist}{Twist}
\DeclareMathOperator{\coker}{coker}
\newcommand{\pr}{\textnormal{pr}}
\DeclareMathOperator{\Ind}{Ind}
\DeclareMathOperator{\cyl}{cyl}
\newcommand{\sbm}[1]{{\let\amp=&\left(\begin{smallmatrix}#1\end{smallmatrix}\right)}} 
\definecolor{mypink}{RGB}{220, 20, 95}
\definecolor{myblue}{RGB}{55, 155, 215}
\definecolor{mygreen}{RGB}{35, 175, 145}
\definecolor{mydarkblue}{RGB}{70, 80, 170}
\title{Equivariant Topological T-Duality}
\author{Tom Dove}
\author{Thomas Schick}
\date{\today}
\address{Mathematisches Institut, Universit\"at G\"ottingen, Bunsenstr. 3-5, 37073 G\"ottingen, Germany}
\email{tomjdove@gmail.com}
\email{thomas.schick@math.uni-goettingen.de}
\begin{document}

\onehalfspacing

\begin{abstract}
Topological T-duality is a relationship between pairs $(E,P)$ over a fixed space $X$, where $E \to X$ is a principal torus bundle and $P \to E$ is a twist, such as a gerbe of principal $PU(\H)$-bundle.
This is of interest to topologists because of the T-duality transformation: a T-duality relation between pairs $(E, P)$ and $(\hat E, \hat P)$ comes with an isomorphism (with degree shift) between the twisted K-theory of $E$ and the twisted K-theory of $\hat E$.
We formulate topological T-duality for circle bundles in the equivariant setting, following the definition of Bunke, Rumpf, and Schick.
We define the T-duality transformation in equivariant K-theory and show that
it is an isomorphism for all compact Lie groups, equal to its own inverse and
uniquely characterized by naturality and a normalization for trivial situations.
\end{abstract}

\maketitle
\tableofcontents

\section{Introduction}

Topological T-duality over a base space $X$ is a relation between pairs $(E,P)$ consisting of a $T^n$-bundle $E \to X$ and a twist $P \to E$.
A twist in this context is the same as in twisted equivariant K-theory; we, for instance, consider principal $PU(\mathcal H)$-bundles.
This models the underlying topology of T-duality in string theory, where it is a duality between models of space-time.
In this context, $E$ is the ``background'' in which strings propagate and $P$ is the ``H-flux'' describing the B-field.
The topological picture is necessarily not the full story, since we are forgetting a significant amount of geometry.
Nonetheless, one can get valuable insights into the global topology of the string background and its T-dual \cite{BEM}.
Such insights are difficult to make when performing calculations locally, as physicists often do.

In topological T-duality, two pairs $(E, P)$ and $(\hat E, \hat P)$ are T-dual if there is an isomorphism between the pullbacks of $P$ and $\hat P$ to $E \times_X \hat E$ that satisfies a certain ``Poincar\`e bundle'' condition \cite{BunkeRumpfSchick}.
\[
\begin{tikzcd}[column sep={4em,between origins},row sep=1em]
    & p^*P \arrow[ld] \arrow[rd] \arrow[rr, "\cong", swap] \arrow[rr, "u"] & & \hat p^* \hat P \arrow[ld] \arrow[rd] & \\
    P \arrow[rd] & & E \times_X \hat E \arrow[ld, "p", swap] \arrow[rd, "\hat p"] & & \hat P \arrow[ld] \\
    & E \arrow[rd] & & \hat E \arrow[ld] & \\
    & & X &&
\end{tikzcd}
\]
In this case, we say that we have a T-duality triple $\bigl( (E,P), (\hat E, \hat P), u\bigr)$.
One can alternatively define a T-duality triple using a twist on the fiberwise join of $E$ and $\hat E$ that restricts to $P$ and $\hat P$ and satisfies a condition analogous to the Poincar\`e bundle condition \cite{DoveSchick:newapproach}.
The advantage of this approach is that one can more directly prove the
existence and uniqueness results for T-duals originally established in \cite{BunkeRumpfSchick}.

Coming with the T-duality relation is an isomorphism (with degree shift) between the $P$-twisted K-theory of $E$ and the $\hat P$-twisted K-theory of $\hat E$.
It is defined as the composition
\[
    K^*(E,P)
    \xrightarrow{\, p^* \,}
    K^*(E \times_X \hat E, p^*P)
    \xrightarrow{\, u^* \,}
    K^*(E \times_X \hat E, \hat p^*\hat P)
    \xrightarrow{\, \hat p_! \,}
    K^{*-n}(\hat E, \hat P).
\]
In words, we pull back along the projection $p$, apply the twist isomorphism $u$, and then push forward along $p_!$.
Twisted K-theory classifies certain physical properties of a space-time model, so such an isomorphism must exist because the physical quantities measured by the twisted K-groups on each pair must be equivalent.
More generally, there is the theory of ``T-admissible'' cohomology theories; these are those for which there is a T-duality isomorphism.
Among these are twisted K-theory and periodic twisted de Rham cohomology, the
former of which is known to be related to the classification of D-brane
charges in string theory. In the untwisted case, this was argued in
\cite{MinasianMoore} and later \cite{Witten:DbranesKtheory}. The case of
torsion twists (non-trivial torsion B-fields) was considered in
\cite{Kapustin} and the case of general twists in \cite{BM}.

Equivariant T-duality has until now not been formulated in general. 
We do so in this paper for circle bundles, extending the work of Bunke and
Schick on topological T-duality \cites{BunkeSchick05,BunkeRumpfSchick} to the
equivariant setting. 
This includes a formulation of the T-duality relation between $G$-equivariant circle bundles equipped with a $G$-equivariant twist.
We define the notion of $G$-T-admissibility and show that $G$-T-admissibility of a twisted $G$-equivariant cohomology theory implies that the T-duality transformation is an isomorphism.
The main theorem is that twisted equivariant K-theory is T-admissible, hence the T-duality transformation is an isomorphism, for all compact Lie groups:

\begin{theorem*}[Theorem \ref{thm:main}]
Let $G$ be a compact Lie group and $X$ a finite $G$-CW-complex.
If $(E,P)$ and $(\hat E, \hat P)$ are $G$-equivariant T-dual pairs of circle
bundles over $X$ then the T-duality transformation is an isomorphism: 
\[
K^*_G(E,P) \xrightarrow{\,\, \cong \,\,} K^{*-1}_G(\hat E, \hat P)
\]
\end{theorem*}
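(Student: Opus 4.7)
The plan is to verify that twisted equivariant K-theory is $G$-T-admissible, since the excerpt promises that $G$-T-admissibility implies the T-duality transformation is an isomorphism. A natural formulation of $G$-T-admissibility combines the standard axioms of a $G$-equivariant cohomology theory (Mayer--Vietoris, Milnor $\lim^1$, homotopy invariance) with a normalization condition pinning down the T-duality transformation on trivial pieces. Twisted equivariant K-theory satisfies the first three axioms by well-known results, so the essential content is the normalization.

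I would first reduce a general finite $G$-CW-complex $X$ to the case $X = G/H$ for closed subgroups $H \le G$. Every such $X$ is built by successive pushouts of the form $X_{k-1} \leftarrow G/H \times S^{n-1} \hookrightarrow G/H \times D^n$, and the pair $(E,P)$ restricts compatibly to each piece. Assuming the T-duality transformation is an isomorphism over each $G$-cell $G/H \times D^n \simeq G/H$ and over the attaching sphere $G/H \times S^{n-1} \simeq G/H$, the Mayer--Vietoris sequence together with the five lemma yields an isomorphism over $X$ by induction on the number of cells.

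Over $X = G/H$, I would use the induction isomorphism $K^*_G(G \times_H Y;\, Q) \cong K^*_H(Y;\, Q|_Y)$ to convert the problem into an $H$-equivariant problem over a point. A $G$-equivariant circle bundle $E \to G/H$ corresponds to an $H$-action on $S^1$, which factors through a character $\chi \colon H \to S^1$. A $G$-equivariant twist on $E$ correspondingly becomes an $H$-equivariant twist on this character circle. Since $H^3(S^1;\Z)$ vanishes non-equivariantly, the equivariant twist reduces to representation-theoretic data, essentially a character of $H$ describing the $H$-equivariant structure on a trivialization.

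The main obstacle, and the heart of the proof, is the explicit computation over a character circle $S^1_\chi$ carrying an $H$-equivariant twist. I would decompose $S^1 = D^+ \cup D^-$ glued along $S^0$; the resulting Mayer--Vietoris sequence in twisted equivariant K-theory collapses everything to expressions in the representation ring $R(H)$. Comparing $K^*_H(S^1_\chi;\, P)$ with $K^{*-1}_H(\hat S^1_{\hat\chi};\, \hat P)$ via the explicit transformation $\hat p_! \circ u^* \circ p^*$, the Poincar\'e bundle condition in the definition of a T-duality triple exchanges the Chern class of $E$ with that of $\hat P$ and vice versa, which is precisely what forces the transformation to be an isomorphism. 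This parallels the non-equivariant argument of \cite{BunkeSchick05}, with $R(H)$ playing the role of the coefficient ring $\Z$ and each step carried out $H$-equivariantly.
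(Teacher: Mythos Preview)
Your reduction strategy is correct and matches the paper: Mayer--Vietoris and the five lemma reduce to $X=G/H$, and the induction isomorphism reduces further to $H$-equivariant pairs over a point. This is exactly the content of the paper's T-admissibility framework.

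The gap is in your final paragraph, where you propose to compute the T-duality transformation directly via a Mayer--Vietoris decomposition of $S^1_\chi$. The analogy with the non-equivariant argument of \cite{BunkeSchick05} breaks down here: non-equivariantly over a point one has $H^2(\ast)=0$ and $H^3(S^1)=0$, so every pair is trivial and the computation is essentially vacuous. Equivariantly, $H^2_H(\ast)$ and $H^3_H(S^1_\chi)$ are typically nonzero, so there are genuinely nontrivial pairs even over a point---this is exactly the observation the paper makes that ``$G$-equivariant T-duality over a point is the same as T-duality over $BG$''. Your Mayer--Vietoris description does compute the twisted $K$-groups (this is Lemma~\ref{lem:twistedktheorycircle}), but knowing the groups abstractly does not tell you what the T-duality transformation does. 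Your sentence ``the Poincar\'e bundle condition\dots\ is precisely what forces the transformation to be an isomorphism'' is not an argument; tracing $\hat p_! \circ u^* \circ p^*$ explicitly through the Mayer--Vietoris identifications is exactly the hard part, and you have not indicated how to do it.

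The paper's route around this obstacle is substantially more elaborate than you envision. For cyclic groups $\Z_n$ (Section~\ref{sec:cyclic}) the transformation is never computed directly: instead one restricts along $\Z_d\hookrightarrow\Z_n$ to reduce to a trivial T-duality triple, and then has to check that the restriction maps are isomorphisms (or injections onto matching subgroups), which requires a careful and somewhat delicate number-theoretic computation. For general finite $G$, the paper passes to rational coefficients and uses a fixed-point decomposition theorem to reduce to the cyclic case. For compact Lie $G$, the paper uses an injectivity trick (pulling back along $E\times_X\hat E\to X$ to trivialise the diagram) combined with the McClure--Uuye restriction theorem to reduce to finite subgroups, together with the torsion-freeness established in Lemma~\ref{lem:twistedktheorycircle}. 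None of this machinery appears in your outline, and without it the verification of T-admissibility over a point is not complete.
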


We also realize that the T-duality transformation is its own inverse
in Corollary \ref{cor:inverse_T_is_T} and that it is uniquely determined by
naturality and normalization in Corollary \ref{cor:uniqueness}.

We leave the case of equivariant principal torus bundles of fiber dimension
$n>1$ to future work. This is due to the difficulties known already in the
non-equivariant case: in this case, T-duals do not always exist, if they exist they are in
general not unique, and the Poincar\'e bundle condition for T-duality is
somewhat technical to formulate. Equivariance would add further complications
due to a zoo of possible actions and twists already of finite cyclic groups on
a single higher dimensional torus.

The interest in twisted K-theory in mathematics is largely a result of the influx of ideas from physics over the past few decades.
It was defined early in the days of K-theory, initially for torsion twists by Donovan-Karoubi \cite{DonovanKaroubi} and then later for general twists by Rosenberg \cite{Rosenberg:TKT}, but did not capture much attention until it was applied in physics.
Twisted K-theory is of interest to physicists due to its applications in
string theory, in particular as the potential home of D-brane charges in the
presence of B-fields \cites{MinasianMoore,Witten:DbranesKtheory,Kapustin,BM},
which is also why it relates to T-duality.
The significance of twisted equivariant K-theory in particular is highlighted by the results of Freed, Hopkins, and Teleman \cite{FHTtwistedktheory}, who prove a close relationship between the twisted equivariant K-theory of a compact Lie group with its conjugation action and the Verlinde algebra of its loop group. 

Defining the T-duality transformation requires an understanding of the push-forward map in twisted equivariant K-theory.
In the non-equivariant setting, the Thom isomorphism and push-forward map have been constructed in \cite{CareyWang:ThomisoTKT}, which incorporates a variety of approaches to twisted K-theory; bundle gerbe modules, bundles of Fredholm operators, and a small amount of KK-theory.
There is however little in the algebraic topology literature on the push-forward in twisted equivariant K-theory.
For this reason, we turn to non-commutative geometry, where a Thom isomorphism has been established for groupoid equivariant KK-theory \cite{Moutuou}.
We choose to define twisted equivariant K-theory in terms of the K-theory of a C*-algebra of sections encoding the twist (see Section \ref{sec:Ktheory}), and so it is straightforward to apply results about equivariant KK-theory to our twisted equivariant K-groups.
Other work on the Thom isomorphism includes the thesis of Garvey \cite{Garvey:thesis}, who proves that the Thom isomorphism in groupoid equivariant KK-theory defined in two ways, by pulling back the Bott element or by using spin representations and Clifford multiplication, are the same.

Physicists have reason to expect a generalisation of T-duality to the equivariant setting.
String theory has been formulated for sufficiently nice orbifold space-times, such as global quotient orbifolds.
Therefore one expects equivariant T-duality at the very least for finite group actions.
Early steps in this direction were the formulations of T-duality for non-free
circle actions \cites{BunkeSchicknonfree,MathaiWu}.

One of the holy grails of the equivariant theory would be the application of
topological T-duality to mirror symmetry following the formulation of
Strominger, Yau, and Zaslow \cite{SYZ}, which, however, is still a wide open
problem.

\section{Background on Topological T-Duality}

T-duality has its origins in string theory, where it is a duality between two space-time models.
Two T-dual models may a priori be very different, but are in fact physically equivalent.
This duality was first observed by studying closed strings on a cylinder of radius $R$; the cylinder is thought of as space-time where one spatial dimension has been curled up into a circle.
In this setting, the string equations of motion were found to be invariant under the transformation $R \mapsto \frac{\alpha'}{R}$, where $\alpha'$ is a constant with units of length squared.
In other words, the physics of a string on a cylinder of radius $R$ is indistinguishable from that of a cylinder of radius $1/R$.
This was generalised by Buscher to form the ``Buscher rules'', which describe how the metric and B-field change locally under T-duality \cite{Buscher87}.

In topological T-duality, we only consider the topology that underlies T-duality.
In this case, the objects of study are pairs $(E, P)$ consisting of an $S^1$-bundle $\pi \colon E \to X$ over a fixed base space $X$ together with a twist $P \to E$.
The task is always to formulate the T-duality relation and prove  existence
and uniqueness results for T-duals.
We also ask how the cohomology of the backgrounds change under T-duality; we shall see that cohomology groups that carry physical information about the space-time model should be invariant under T-duality.

The first paper on topological T-duality was by Bouwknegt, Evslin, and Mathai \cite{BEM}.
The main observation was that the presence of a non-trivial twist $P$ (the so-called \emph{H-flux}) changes the topology of space-time when taking the T-dual.
In particular, if $(E, P)$ is T-dual to $(\hat E, \hat P)$, there is an exchanging of Chern classes:
\begin{equation}\label{eqn:Texchange}
    \pi_!\bigl([P]\bigr) = c_1(\hat E),
    \,\,\,
    \hat \pi_!\bigl([\hat P]\bigr) = c_1(\hat E),
\end{equation}
where $[P] \in H^3(E;\Z)$ and $[\hat P] \in H^3(\hat E, \Z)$ are the characteristic classes classifying the twists, $c_1(-)$ denotes the first Chern class, and $\pi_!, \hat\pi_!$ are the push-forward maps for the respective $S^1$-bundles.
The main observation is that a non-trivial H-flux induces a global change in the topology of the string background.
This is notable because global topological changes are not detected by the Buscher rules, which are local in nature.

A formalised definition of topological T-duality was made by Bunke and Schick using a Thom class on an associated $S^3$-bundle, namely the sphere bundle of $L \oplus \hat L$, where $L$ and $\hat L$ are the line bundles associated with $E$ and $\hat E$ \cite{BunkeSchick05}.
In this setting, the authors confirmed the relation \eqref{eqn:Texchange} and proved that every pair $(E, P)$ has a uniquely defined T-dual, up to isomorphism.
The key part of the proof was the construction of a classifying space $R$ together with a homeomorphism $T \colon R \to R$ inducing the T-duality relation.

The same authors, with the addition of Rumpf, then formulated T-duality for principal $T^n$-bundles in \cite{BunkeRumpfSchick}, introducing the notion of T-duality triples.
They consider pairs $(E,P)$ consisting of a principal $T^n$-bundle $\pi \colon E \to X$ and a twist $P \to E$, with the additional assumption that $P$ is trivialisable when restricted to the fibers of $E$.
There was one further assumption on the characteristic class of $P$, but this turns out to be redundant \cite{DoveSchick:newapproach}.
Here, two pairs $(E,P)$ and $(\hat E, \hat P)$ are T-dual if they belong to a T-duality triple $\bigl( (E, P), (\hat E, \hat P), u \bigr)$, where $u$ is a twist morphism fitting into the following diagram:
\begin{equation}\label{eqn:Tdiag}
    \begin{tikzcd}[column sep={4em,between origins},row sep=1em]
        & p^*P \arrow[ld] 
            \arrow[rd] \arrow[rr, "u"] 
        && \hat p^* \hat P
            \arrow[ld] \arrow[rd] 
        & \\
        P 
            \arrow[rd] 
        && E \times_X \hat E
            \arrow[ld, "p", swap] \arrow[rd, "\hat p"]
        && \hat P 
            \arrow[ld] 
        \\
        & E 
            \arrow[rd] 
        && \hat E 
            \arrow[ld]
        & \\
        && X 
        &&
    \end{tikzcd}
\end{equation}
The morphism $u$ must satisfy the \emph{Poincar\'e bundle condition}, which we describe now.
First, recall that automorphisms of the trivial twist are in bijection with degree 2 cohomology of the base space. 
Now, if $x \in X$, then by choosing trivialisations of $P|_{E_x}$ and $\hat P|_{\hat E_x}$, $u$ gives an automorphism of the trivial twist on $E_x \times \hat E_x \cong T^{2n}$:
\[
    P_{\text{trivial}} 
    \cong P|_{E_x} 
    \xrightarrow{\, u \,}
    \hat P|_{\hat E_x}
    \cong
    P_{\text{trivial}}
\]
If this automorphism corresponds to the standard symplectic class of
$H^2((S^1\times S^1)^n;\Z)$ up to the choice of trivialisations of $P|_{E_x}$ and $\hat P|_{\hat E_x}$, then $u$ satisfies the Poincar\'e bundle condition.
This condition is named after the Poincar\'e line bundle, which arises in algebraic geometry and is the canonical line bundle on the product of an abelian variety with its dual.

For $n>1$, the situation is quite different to the circle case: the T-dual of a pair $(E,P)$ need not exist and if it does exist it need not be unique.
Bunke, Rumpf and Schick provide a simple criterion for when a T-dual exists and what the T-duals are \cite{BunkeRumpfSchick}*{Theorem 2.24}.
The proofs of this classification are based on the construction of a classifying space for T-duality triples; simpler proofs have since been found based on an alternative formulation of T-duality \cite{DoveSchick:newapproach}.
When the twist is not trivialisable on fibers, there is a non-classical interpretation of the T-dual as a bundle of noncommutative tori \cite{MathaiRosenberg:Tdualitytorus}.
This fits into an interpretation of T-duality via noncommutative geometry, which we won't discuss. 

Given a diagram \eqref{eqn:Tdiag} and a twisted cohomology theory $h^*(-)$, we can define the following composition:
\[
    h^*(E,P)
    \xrightarrow{\,\, p^* \,\,}
    h^*(E \times_X \hat E, p^*P)
    \xrightarrow{\,\, u^* \,\,}
    h^*(E \times_X \hat E, \hat p^*\hat P)
    \xrightarrow{\,\, \hat p_! \,\,}
    h^{*-n}(\hat E, \hat P).
\]
In words, we pull back along $p$, apply the twist automorphism $u$, and then push forward along $\hat p$.
This is called the T-duality transformation and is an essential aspect of T-duality.
Since a pair and its dual should represent equivalent space-time models, cohomological information about each model should be equivalent.
Thus the T-duality transformation should be an isomorphism for cohomological groups that carry physical information.
This is true of twisted K-theory and twisted de Rham cohomology; see the aforementioned T-duality papers.
As a general framework for this, Bunke and Schick introduced the notion of T-admissibility; the T-duality transformation can be defined for any twisted cohomology theory (satisfying some prescribed axioms) and will be an isomorphism if the theory is T-admissible \cite{BunkeSchick05}. 

In general, one would like to allow for singularities, that is, cases where the $T^n$-action is not free.
The main approach for this is to consider T-duality in the context of stacks, groupoids, and orbispaces \cites{BunkeSchicknonfree,BunkeSchickSpitzweck,Pande18}.
A full theory of T-duality for stacks, including a T-duality isomorphism for the K-theory of stacks, would of course include the equivariant case considered in this paper.
The aforementioned papers, however, do not consider this type of T-duality transformation.
\cite{BunkeSchicknonfree} considers Borel equivariant K-theory, defined by taking the non-equivariant K-theory of the Borel construction, whereas \cite{BunkeSchickSpitzweck} considers periodic twisted cohomology, which is a generalisation of de Rham cohomology.
Non-free actions have also been dealt with by passing to the Borel construction \cite{LinshawMathai}.

\section{Equivariant T-Duality: The Setup}

Let $G$ be a compact group and $X$ a locally compact $G$-space.
For the purpose of the rest of this paper, equivariant (topological) T-duality is a relationship between pairs $(E, P)$ consisting of a $G$-equivariant principal $S^1$-bundle $E \to X$ together with a $G$-equivariant twist $P \to E$.
In practice, there is a choice to be made about which model of twist to use, depending on how one wants to define twisted K-theory.
Instead of making a choice, we will define equivariant T-duality and the T-duality transformation for twists satisfying a set of prescribed axioms, which are detailed in the appendix.
When we later discuss the T-duality transformation in twisted equivariant K-theory, we shall use equivariant principal $PU(\H)$-bundles.

Consider two $G$-equivariant pairs $(E, P)$ and $(\hat E, \hat P)$ fitting into the following diagram:
\begin{equation}\label{tdualitydiagram}
\begin{tikzcd}[column sep={4em,between origins},row sep=1em]
    & 
    p^*P \arrow[ld] \arrow[rd] \arrow[rr, "u"] & & 
    \hat p^* \hat P \arrow[ld] \arrow[rd] & \\
    P \arrow[rd] & & 
    E \times_X \hat E \arrow[ld, "p", swap] \arrow[rd, "\hat p"] & & 
    \hat P \arrow[ld] \\
    & 
    E \arrow[rd, "\pi", swap] & & 
    \hat E \arrow[ld, "\hat \pi"] & \\
    & & 
    X & &
\end{tikzcd}
\end{equation}
All of the maps in the diagram are $G$-equivariant. One can therefore apply the Borel construction to the entire diagram to get a non-equivariant diagram of circle bundles and twists over the Borel construction $X \times_G EG$. This leads to our definition of equivariant T-duality:

\begin{definition}
A $G$-equivariant T-duality triple is a triple $\bigl( (E, P), (\hat E, \hat
P), u \bigr)$ fitting into a diagram of the form \eqref{tdualitydiagram} and
such that the induced non-equivariant triple over $X \times_G EG$ is a
T-duality triple (with fiber $S^1$).
\end{definition}

An obvious question is whether every T-duality triple over $X \times_G EG$ comes from an equivariant triple.
We will now prove that this is indeed the case.
Let $S(E, \hat E)$ denote the sphere bundle of $(E \times_{S^1} \C) \oplus (\hat E \times_{S^1} \C)$.
We can construct $S(E, \hat E)$ as the gluing of two mapping cylinders:
\[
    S(E,\hat E) \cong \cyl(p) \cup_{E \times_X \hat E} \cyl(\hat p).
\]
There are canonical inclusions $i \colon E \to S(E, \hat E)$ and $i \colon \hat E \to S(E, \hat E)$ that send $E$ and $\hat E$ to the ends of their respective cylinders.

\begin{proposition}\label{prop:twistiso}
There is a bijection between the set of isomorphisms $p^*P \to \hat p^*\hat P$ and the set of isomorphism classes of twists $T$ on $S(E,\hat E)$ such that $T|_E \cong P$ and $T|_{\hat E} \cong \hat P$.
\end{proposition}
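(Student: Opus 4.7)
The plan is to exploit the pushout description $S(E,\hat E) \cong \cyl(p) \cup_{E \times_X \hat E} \cyl(\hat p)$ together with two standard properties of twists (which appear in the axiomatization in the appendix): homotopy invariance, so that a twist on $\cyl(f \colon A \to B)$ is, up to isomorphism, the pullback of a twist on $B$; and a gluing/Mayer--Vietoris axiom, so that a twist on a pushout $Y_1 \cup_Z Y_2$ is equivalent to a pair of twists on $Y_1, Y_2$ together with an isomorphism of their restrictions to $Z$. With these in hand, classifying twists on $S(E,\hat E)$ restricting to $P$ and $\hat P$ reduces to specifying gluing data over $E \times_X \hat E$.

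For the forward map, given an isomorphism $u \colon p^*P \to \hat p^*\hat P$, use homotopy invariance to extend $P$ to a twist $\tilde P$ on $\cyl(p)$ with $\tilde P|_E \cong P$ and $\tilde P|_{E \times_X \hat E} \cong p^*P$, and analogously extend $\hat P$ to $\tilde{\hat P}$ on $\cyl(\hat p)$. The isomorphism $u$ is then precisely the gluing datum that assembles $\tilde P$ and $\tilde{\hat P}$ into a twist $T$ on $S(E,\hat E)$, and by construction $T|_E \cong P$ and $T|_{\hat E} \cong \hat P$.

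For the reverse map, given a twist $T$ on $S(E,\hat E)$ with $T|_E \cong P$ and $T|_{\hat E} \cong \hat P$, homotopy invariance identifies $T|_{\cyl(p)}$ with $\tilde P$ and $T|_{\cyl(\hat p)}$ with $\tilde{\hat P}$ up to isomorphism. Restricting these identifications to the overlap $E \times_X \hat E$ and composing yields an isomorphism $p^*P \to \hat p^*\hat P$, which we take to be $u$.

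The main obstacle is matching the equivalence relations on the two sides: on the twist side we are quotienting by isomorphism of $T$, while on the other side we deal with honest isomorphisms $u$, not equivalence classes. The verification to be carried out is that the ambiguities in the reverse map --- choices of boundary isomorphisms $T|_E \cong P$, $T|_{\hat E} \cong \hat P$, and of the homotopy-invariance isomorphisms on each cylinder --- are precisely absorbed by passing to isomorphism classes of $T$, while conversely distinct $u$ yield non-isomorphic twists $T$. This is essentially a rigidification statement for the gluing of sections in the stack of twists, handled by the axioms in the appendix.
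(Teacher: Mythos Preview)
Your approach is essentially the same as the paper's: use the decomposition $S(E,\hat E)\cong\cyl(p)\cup_{E\times_X\hat E}\cyl(\hat p)$, pull back $P$ and $\hat P$ along the deformation retractions $f\colon\cyl(p)\to E$ and $\hat f\colon\cyl(\hat p)\to\hat E$, and glue using $u$ for the forward map; for the reverse map, restrict $T$ to the overlap and compose the canonical identifications $p^*P\cong p^*(T|_E)\cong T|_{E\times_X\hat E}\cong\hat p^*(T|_{\hat E})\cong\hat p^*\hat P$.

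The gap in your proposal is the final paragraph. You correctly flag that the two maps must be checked to be mutually inverse, but you do not carry this out; the claim that the ambiguities are ``handled by the axioms in the appendix'' is not justified. This is precisely where the paper does the real work: it verifies explicitly that starting from $u$ and forming $T=f^*P\cup_u\hat f^*\hat P$, the composition \eqref{eqn:twistiso} returns $u$ on the nose; and conversely, starting from $T$, the glued twist $f^*P\cup_u\hat f^*\hat P$ is isomorphic to $T$. The second of these requires a careful commutative-diagram argument comparing two chains of isomorphisms $p^*P\to T|_{E\times_X\hat E}$ (and dually on the $\hat P$ side), using the homotopies $i_E\circ f\simeq\id$ and $i_E\circ p\simeq i_{E\times_X\hat E}$. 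Your framing in terms of ``ambiguities being absorbed by isomorphism classes'' is not quite the right shape: what is needed is a direct check that the two constructions compose to the identity in each direction, and this does not follow formally from the gluing axiom alone. You should replace the last paragraph with that explicit verification.
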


\begin{proof}
Let $f \colon \cyl(p) \to E$ be the canonical map $[e, \hat e, t] 
\mapsto e$, and similarly define $\hat f \colon \cyl(\hat p) \to \hat E$. 
Let $i_E$, $i_{E \times_X \hat E}$, and $i_{\hat E}$ denote the inclusions of $E$, $E \times_X \hat E$, and $\hat E$ into $S(E, \hat E)$, respectively.

The forward direction of the bijection is defined as follows. 
Note that 
\[ 
    (f^*P)|_{E \times \hat E} = p^*P
    \quad \text{and} \quad
    (\hat f^*\hat P)|_{E \times_X \hat E} = \hat p^* \hat P.
\]
Therefore, given an isomorphism $u \colon p^*P \to \hat p^* \hat P$, we can glue together $f^*P$ and $\hat f^*\hat P$ along $E\times_X \hat E$ to obtain a twist $T = f^*P \cup_u \hat f^*P$ that appropriately restricts to $P$ and $\hat P$.

For the inverse map, given a twist $T \to S(E,\hat E)$ that restricts to $P$ and $\hat P$, we define $u \colon p^*P \to \hat p^*\hat P$ by the sequence of isomorphisms
\begin{equation}\label{eqn:twistiso}
    p^*P 
    \cong p^*(T|_E) 
    \cong T|_{E \times_X \hat E}
    \cong \hat p^*(T|{\hat E})
    \cong \hat p^*\hat P.
\end{equation}
Here, we have used that $i_E \circ p \simeq i_{E \times_X \hat E} \simeq i_{\hat E} \circ \hat p$.

Let us confirm that these constructions are inverse to each other.
Given an isomorphism $u \colon p^*P \to \hat p^*\hat P$, we first need to show that if $T = f^*P \cup_u \hat f^*\hat P$, the composition \eqref{eqn:twistiso} is equal to $u$.
In this case, $T|_{E \times_X \hat E} = p^*P \cup_u \hat p^*\hat P$ and \eqref{eqn:twistiso} becomes
\[
    p^*P \cong p^*P \cup_u \hat p^*\hat P \cong \hat p^*\hat P
\]\[
    x \mapsto x \sim u(x) \mapsto u(x),
\]
so this construction indeed returns $u$.

Now, let $T \to S(E, \hat E)$ be a twist with $T|_E \cong P$ and $T|_{\hat E} \cong \hat P$.
We show that $T \cong f^*P \cup_u \hat f^*\hat P$, where $u$ is the isomorphism \eqref{eqn:twistiso}.
Since $i_E \circ f \simeq \id$, we know that
\[
    T|_{\cyl(p)} \cong f^*(T|_E) \cong f^*P.
\]
Similarly $T|_{\cyl(\hat p)} \cong \hat f^* \hat P$.
We need these two parts to glue together according to \eqref{eqn:twistiso}, which means that the composition $(f^*P)|_{E \times_X \hat E} \cong T|_{E \times_X \hat E} \cong (\hat f^*\hat P)|_{E \times_X \hat E}$ is equal to \eqref{eqn:twistiso}.
Both isomorphisms factor through $T|_{E \times_X \hat E}$, so we can split the maps into two parts; a map $p^*P \to T|_{E \times_X \hat E}$ and another $T|_{E \times_X \hat E} \to \hat p^*\hat P$.
For the first, consider the following diagram:
\[
\begin{tikzcd}[column sep={5em,between origins}]
    p^*P 
        \arrow[rr, "P \cong T|_E"] \arrow[rd, "p = f \circ i_{E \times_X \hat E}", swap]
    && p^*(T|_E) 
        \arrow[rr, "i_E \circ p \simeq i_{E \times_X \hat E}"]
        \arrow[rd, dashed]
    && T|_{E \times_X \hat E} \\
    & (f^*P)|_{E \times_X \hat E} 
        \arrow[rr, "P \cong T|_E"]
    && (f^*T|_E)|_{E \times_X \hat E} 
        \arrow[ru, "i_E \circ f \simeq \id", swap]
    &
\end{tikzcd}
\]
The arrows are labelled with the relations that induce them.
The upper path is the first two maps in \eqref{eqn:twistiso}.
We need to show that it equals the lower path, which comes from the isomorphism $T|_{\cyl(p)} \cong f^*P$ restricted to $E \times_X \hat E$.
The dotted arrow is induced by $p = f \circ i_{E \times_X \hat E}$ and produces a commutative square.
Since
\[
    i_E \circ f \simeq \id
    \implies
    i_E \circ p = i_E \circ f \circ i_{E \times_X \hat E} \simeq i_{E \times_X \hat E},
\]
the triangle also commutes.
Therefore, the above diagram commutes. 
The same argument works for the isomorphism $T|_{E \times_X \hat E} \to \hat p^*\hat P$.
This completes the proof.
\end{proof}

This proposition can be used to formulate an equivalent definition of topological T-duality, where the twist isomorphism $p^*P \cong \hat p^* \hat P$ is replaced with a twist on $S(E,\hat E)$.
See \cite{DoveSchick:newapproach} for further details.

\begin{theorem}
The $G$-equivariant pairs $(E,P)$ and $(\hat E, \hat P)$ over $X$ (with fiber
$S^1$) are $G$-equivariantly T-dual if and only if $(E \times_G EG, P \times_G EG)$ and $(\hat E \times_G EG, \hat P \times_G EG)$ are non-equivariantly T-dual over $X \times_G EG$.
\end{theorem}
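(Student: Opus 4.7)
The forward implication is essentially tautological: by definition, a $G$-equivariant T-duality triple is required to induce a non-equivariant T-duality triple after applying the Borel construction, so I would dispose of it in one sentence. The content lies in the reverse direction, so the plan is to produce a $G$-equivariant twist isomorphism $u \colon p^*P \to \hat p^*\hat P$ out of a given non-equivariant T-duality on the Borel construction, and then check that $u$ automatically satisfies the equivariant T-duality condition.

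My strategy is to route the argument through Proposition \ref{prop:twistiso}, applied in both the non-equivariant and the $G$-equivariant setting. First I would note that the bijection of Proposition \ref{prop:twistiso} holds verbatim in the $G$-equivariant category: its proof is formal and produces canonical isomorphisms, so every construction (the mapping cylinder decomposition, the gluing $f^*P \cup_u \hat f^* \hat P$, and the maps in \eqref{eqn:twistiso}) can be carried out $G$-equivariantly. I would state this as an equivariant analogue and invoke it as such. Next I would use the well-known fact that for a $G$-space $Y$ (with suitable axioms on twists as spelled out in the appendix), isomorphism classes of $G$-equivariant twists on $Y$ correspond bijectively to isomorphism classes of non-equivariant twists on the Borel construction $Y \times_G EG$, via $P \mapsto P \times_G EG$.

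Assume now that $(E \times_G EG, P \times_G EG)$ and $(\hat E \times_G EG, \hat P \times_G EG)$ are non-equivariantly T-dual via some twist isomorphism $\tilde u$. Applying the non-equivariant Proposition \ref{prop:twistiso} to $\tilde u$ produces a twist $\tilde T$ on $S(E \times_G EG, \hat E \times_G EG)$ restricting to $P \times_G EG$ and $\hat P \times_G EG$. The key geometric input is the natural homeomorphism
\[
    S(E \times_G EG, \hat E \times_G EG) \;\cong\; S(E, \hat E) \times_G EG,
\]
which holds because the sphere bundle construction commutes with the (free) Borel construction. Under the equivariant--non-equivariant twist correspondence, $\tilde T$ therefore descends to a $G$-equivariant twist $T$ on $S(E,\hat E)$ whose restrictions to $E$ and $\hat E$ are (equivariantly isomorphic to) $P$ and $\hat P$. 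Feeding $T$ into the equivariant Proposition \ref{prop:twistiso} produces the desired $G$-equivariant isomorphism $u \colon p^*P \to \hat p^*\hat P$. By the naturality of both applications of Proposition \ref{prop:twistiso} under Borel construction, the non-equivariant isomorphism induced by $u$ coincides with $\tilde u$ up to the already-chosen identifications, so the triple $\bigl((E,P),(\hat E,\hat P),u\bigr)$ satisfies the definition of a $G$-equivariant T-duality triple.

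I expect the main obstacle to be the bookkeeping rather than a genuine difficulty: verifying that Proposition \ref{prop:twistiso} is natural with respect to the Borel construction, in the sense that the forward and inverse maps of the bijection commute with $(-) \times_G EG$, and that the equivariant--non-equivariant twist correspondence is compatible with pullback along the projections $p$, $\hat p$. Neither is surprising, but to make the diagram chase clean I would isolate these compatibilities as a short lemma before deploying the main argument. Once they are in hand, the theorem is essentially an application of a commutative square of bijections.
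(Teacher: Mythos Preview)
Your proposal is correct and follows essentially the same route as the paper: both arguments dispose of the forward direction by definition, and for the converse both pass through Proposition~\ref{prop:twistiso} (applied equivariantly and non-equivariantly), use the identification $S(E\times_G EG,\hat E\times_G EG)\cong S(E,\hat E)\times_G EG$, and invoke the bijection between $G$-equivariant twists on a space and non-equivariant twists on its Borel construction (both classified by $H^3_G$) to conclude via a commutative square of bijections. The only cosmetic difference is that the paper packages this as ``three sides of the square are bijections, hence so is the fourth,'' whereas you describe the same square by tracing an element around it; the naturality checks you flag as potential obstacles are exactly what makes the paper's square commute, and the paper leaves them implicit just as you suggest isolating them in a short lemma.
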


\begin{proof}
The forward direction follows by definition of equivariant T-duality.
For the reverse direction, we show that every twist morphism $u \colon p^*P \times_G EG \to \hat p^* \hat P \times_G EG$ is induced from an equivariant morphism $p^*P \to \hat p^* \hat P$.

Consider the following diagram: 
\[
\begin{tikzcd}[row sep = {1cm}]
    \left\{ \parbox{3.7cm}{\centering $G$-equivariant morphisms $p^*P \to \hat p^*\hat P$ } \right\}
        \dar["\textnormal{Borel}"] \rar["\cong"]
    & \left\{ \parbox{3.5cm}{\centering $G$-equivariant twists on $S(E, \hat E)$ that restricts to $P$ and $\hat P$} \right\} 
        \dar["\textnormal{Borel}"] \lar
    \\
     \left\{ \parbox{4.1cm}{\centering Morphisms $p^*P \times_G EG \to \hat p^*\hat P \times_G EG$ } \right\}
        \rar["\cong"] 
    & \left\{ \parbox{3.6cm}{\centering Twists on $S(E \times_G EG, \hat E \times_G EG)$ that restrict to $P \times_G EG$ and $\hat P \times_G EG$} \right\}
    \lar
\end{tikzcd}
\]
Here, the horizontal maps are a result of Proposition \ref{prop:twistiso} and the vertical maps are obtained by taking the Borel construction.
Isomorphism classes of $G$-equivariant twists on a space $X$ are in bijection with isomorphism classes of non-equivariant twists on $X \times_G EG$; they are both in bijection with $H^3_G(X;\Z)$.
We thus conclude that the vertical map on the right-hand side is an isomorphism.
So, the two horizontal maps and right-hand side vertical map are bijections, implying that the left-hand side map is a bijection.
This completes the proof.
\end{proof}

\begin{corollary}
Every T-duality triple over $X \times_G EG$ with fiber $S^1$ comes from a $G$-equivariant T-duality triple over $X$.
\end{corollary}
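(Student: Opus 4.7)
The plan is to lift a T-duality triple over $X \times_G EG$ piece by piece, using that both $G$-equivariant circle bundles and $G$-equivariant twists on a $G$-space $X$ are classified by Borel cohomology, and then invoking the preceding theorem to handle the twist morphism.

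Concretely, let $((F,Q),(\hat F,\hat Q),v)$ be a T-duality triple over $X \times_G EG$ with fiber $S^1$. First, I would lift the two circle bundles: since $G$-equivariant principal $S^1$-bundles on $X$ are classified by $H^2_G(X;\Z) \cong H^2(X \times_G EG;\Z)$, the classes of $F$ and $\hat F$ are realized by $G$-equivariant circle bundles $E,\hat E \to X$ together with chosen isomorphisms $E \times_G EG \cong F$ and $\hat E \times_G EG \cong \hat F$. Next, I would lift the twists: by the analogous classification of $G$-equivariant twists on $E$ (respectively $\hat E$) by $H^3_G(E;\Z) \cong H^3(E \times_G EG;\Z)$ used in the proof of the preceding theorem, the twists $Q$ and $\hat Q$ are (isomorphic to) the Borel constructions of $G$-equivariant twists $P \to E$ and $\hat P \to \hat E$.

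Having fixed these lifts and identifications, the twist morphism $v$ becomes, after the identifications $E \times_G EG \cong F$, $\hat E \times_G EG \cong \hat F$, $P \times_G EG \cong Q$ and $\hat P \times_G EG \cong \hat Q$, a non-equivariant morphism $p^*P \times_G EG \to \hat p^* \hat P \times_G EG$ over $E \times_X \hat E \times_G EG$. By the preceding theorem (its proof establishes the bijection between equivariant twist morphisms and their Borel constructions), $v$ is induced by a unique $G$-equivariant morphism $u \colon p^*P \to \hat p^*\hat P$, and we obtain an equivariant triple $\bigl((E,P),(\hat E,\hat P),u\bigr)$ over $X$.

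Finally, I would verify that this triple is a $G$-equivariant T-duality triple. By construction, applying the Borel construction to it produces a triple isomorphic to the original $\bigl((F,Q),(\hat F,\hat Q),v\bigr)$, which is a non-equivariant T-duality triple over $X \times_G EG$; by the definition of equivariant T-duality triple this is exactly what is required. The only real content is therefore the classification statements used for lifting bundles and twists; the twist morphism step is handed to us by the previous theorem. The main potential obstacle is purely bookkeeping: keeping the various chosen isomorphisms coherent so that the Borel construction of the equivariant triple really matches the starting data, but since all of the classification results are natural in the relevant spaces, no genuine difficulty arises.
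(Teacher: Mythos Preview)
Your proposal is correct and follows essentially the same approach as the paper: lift the circle bundles via the classification by $H^2_G(X;\Z)$, lift the twists via the analogous classification by $H^3_G$, and then invoke the preceding theorem to lift the twist morphism. The paper's proof is terser but makes exactly these moves.
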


\begin{proof}
We need every pair over $X \times_G EG$ to be of the form $(E \times_G EG, P \times_G EG)$ for some equivariant pair $(E,G)$.
This is true because both equivariant $S^1$-bundles and bundles on $X \times_G EG$ are classified by $H^2_G(X)$ and a similar statement can be made for the twists.
\end{proof}

The theorem also implies that the existence and uniqueness properties of T-duals carry over to the equivariant setting:

\begin{corollary}
For each $G$-equivariant pair $(E,P)$ over $X$ with fiber $S^1$, there is a unique T-dual $(\hat E, \hat P)$ characterised by the relations
\[
    \pi_!\bigl([P]\bigr) = c_1(E)
    \,\, \text{and} \,\,\, 
    \hat \pi_!\bigl([\hat P]\bigr) = c_1(\hat E).
\]
\end{corollary}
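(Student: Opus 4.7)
The plan is to reduce everything to the non-equivariant classification from \cite{BunkeSchick05} via the Borel construction, using the preceding theorem and corollary as the bridge. Since the theorem identifies $G$-equivariant T-dual pairs over $X$ with non-equivariant T-dual pairs over $X \times_G EG$, and the preceding corollary shows that every non-equivariant pair over $X \times_G EG$ lifts to an equivariant pair over $X$, existence and uniqueness transfer directly.

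For existence, I would form the non-equivariant pair $(E \times_G EG, P \times_G EG)$ over $X \times_G EG$ and apply the non-equivariant existence result to produce a T-dual $(\tilde E, \tilde P)$. By the preceding corollary this pair is of the form $(\hat E \times_G EG, \hat P \times_G EG)$ for some equivariant pair $(\hat E, \hat P)$, and the preceding theorem then upgrades the non-equivariant T-duality to a $G$-equivariant T-duality between $(E,P)$ and $(\hat E, \hat P)$. For uniqueness, suppose $(\hat E_1, \hat P_1)$ and $(\hat E_2, \hat P_2)$ are both $G$-equivariant T-duals of $(E,P)$. Their Borel constructions are both non-equivariant T-duals of $(E \times_G EG, P \times_G EG)$, hence isomorphic as pairs over $X \times_G EG$ by the non-equivariant uniqueness. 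Because equivariant isomorphism classes of $S^1$-bundles and of twists on a $G$-space are in bijection with the non-equivariant isomorphism classes of their Borel constructions (both sides being classified by $H^2_G$ and $H^3_G$ respectively), this non-equivariant isomorphism lifts to a $G$-equivariant isomorphism $(\hat E_1, \hat P_1) \cong (\hat E_2, \hat P_2)$.

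For the Chern class characterisation, I would use that equivariant cohomology is, by definition, the ordinary cohomology of the Borel construction: the class $c_1(E) \in H^2_G(X;\Z)$ is the ordinary first Chern class $c_1(E \times_G EG)$ and $[P] \in H^3_G(E;\Z)$ agrees with $[P \times_G EG]$. Since $E \times_G EG \to X \times_G EG$ remains a principal $S^1$-bundle, the equivariant fibre integration $\pi_!$ is literally the ordinary $\pi_!$ of the Borel construction. Consequently the equivariant Chern class relations $\pi_!([P]) = c_1(\hat E)$ and $\hat\pi_!([\hat P]) = c_1(E)$ hold if and only if the analogous non-equivariant relations hold over $X \times_G EG$, and the latter are exactly the content of the non-equivariant characterisation in \cite{BunkeSchick05}.

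There is no serious obstacle here; the only point that deserves care is checking that fibre integration along $S^1$ commutes with the Borel construction and that the bijections between equivariant and non-equivariant isomorphism classes of bundles and twists (via $H^2_G$, $H^3_G$) apply to the pairs at hand. Both of these are standard, so the corollary is essentially a translation of the non-equivariant classification through the equivalence established in the theorem.
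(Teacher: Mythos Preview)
Your proposal is correct and matches the paper's intended argument: the paper states the corollary without an explicit proof, only remarking that ``the theorem also implies that the existence and uniqueness properties of T-duals carry over to the equivariant setting.'' Your write-up simply fleshes out that implication---passing to the Borel construction, invoking the non-equivariant classification of \cite{BunkeSchick05}, and lifting back via the preceding theorem and corollary---which is exactly what the authors have in mind.
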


\begin{example}\label{ex:verytrivial}
The most trivial $G$-equivariant T-duality triple is the triple over a trivial $G$-space $X$ consisting of trivial circle bundles and trivial twists.
\end{example}

We will later use a slightly larger class of basic T-duality triples which we
still call ``trivial'' (the duality aspects are trivial).
\begin{definition}\label{def:trivial}
  A T-duality triple of the following type is called trivial:
  \begin{enumerate}
  \item $X$ a G space with arbitrary $G$-equivariant twist $Q_0$
  \item  $E=\hat E=X\times S^1$ (with trivial
    action on the $S^1$-factor) 
  \item $Q:=\hat Q:=\pr_X^*Q_0=\pr_X^*Q_o$.
  \item    Using the natural monoidal structure, we can write
  \begin{equation*}\label{eq:tensordecomposition}
\pr_E^*Q=\pr_{\hat
    E}^*\hat Q = \pr_X^*Q_0 \otimes \pr_{S^1\times S^1}^*0
\end{equation*}
 where we write $0$ for
  the trivial twist and $\otimes$ for the monoidal product/sum on twists.
With respect to this tensor decomposition, we define the twist automorphism $u\colon \pr_E^*Q\to \pr_{\hat E}^*\hat Q$
  as
  \begin{equation*}
  u:=\id\otimes u_0 \colon  \pr_X^*Q_0 \otimes \pr_{S^1\times S^1}^*0\to  \pr_X^*Q_0 \otimes \pr_{S^1\times S^1}^*0,
\end{equation*}
where $u_0$ is 
  the standard automorphism of the trivial twist on $S^1\times S^1$ (which
  satisfies the 
  Poincar\'e bundle condition).
  \end{enumerate}
\end{definition}

\begin{example}\label{ex:noflux}
A $G$-equivariant $S^1$-bundle $E \to X$ equipped with a trivial twist is T-dual to the bundle $X \times S^1$, where $G$ acts trivially on the $S^1$-factor, equipped with a twist classified by the equivariant Chern class of $E$,
\[
    c_1(E) \in H^2_G(X) \hookrightarrow H^3_G(X \times S^1).
\]
This is summarised by saying that pairs with trivial twists are T-dual to pairs with trivial bundles.
\end{example}

\begin{example}
If $(E, P)$ and $(\hat E, \hat P)$ are T-dual, then $(E, P \otimes \pi^*Q)$ and $(\hat E, \hat P \otimes \hat\pi^*Q)$ are also T-dual, where $Q$ is an equivariant twist on $X$.
\end{example}

These are very basic examples which are needed when reducing the general
equivariant T-duality situation to basic building blocks. More specific and
more interesting examples are elaborated in Section \ref{sec:examples}, once
we have fully developed the theory.

\section{The T-Duality Transformation} \label{sec:T_trafo}

The T-duality transformation can be defined for general twisted equivariant cohomology theories.
Included in Appendix \ref{appendix} is a minimal axiomatic description of equivariant twists and twisted equivariant cohomology theories for which the T-duality transformation can be defined.
Later we will restrict our attention to twisted equivariant K-theory.

\begin{definition}\label{def:Ttransformation}
Let $h_G$ be a twisted equivariant cohomology theory and consider a
$G$-equivariant T-duality triple $\bigl( (E, P), (\hat E, \hat P), u\bigr)$
with circle fibers. The T-duality transformation is the composition $T = \hat p_! \circ u^* \circ p^*$, that is,
\[
    h^*_G(E, P) 
    \xrightarrow{\,\, p^* \,\,}
    h^*_G(E \times_X \hat E, p^*P) 
    \xrightarrow{\,\, u^* \,\,}
    h^*_G(E \times_X \hat E, \hat p^*\hat P) 
    \xrightarrow{\,\, \hat p_! \,\,}
    h^{*-1}_G(\hat E, \hat P).
\]
Note the push-forward gives a degree shift of $-1$.
\end{definition}

Given a $G$-equivariant T-duality triple $\bigl( (E, P), (\hat E, \hat P), u \bigr)$ over $X$, one can use a $G$-equivariant function $f \colon Y \to X$ to pull back the triple to a $G$-equivariant T-duality triple over $Y$.
Similarly, one can restrict along a group homomorphism $\alpha \colon H \to G$ to obtain a $H$-equivariant triple on $X$.
The T-duality transformation is natural with respect to these constructions:

\begin{proposition}
The T-duality transformation is natural with respect to group homomorphisms and continuous functions, that is, given a map $f \colon Y \to X$ and a group homomorphism $\alpha \colon H \to G$, the following diagrams commute:
\[
\begin{tikzcd}[column sep={8em,between origins},row sep=2em]
    h_G(E, P) \arrow[r, "T"] \arrow[d, "f^*", swap]
        & h_G(\hat E, \hat P) \arrow[d, "f^*"]
    \\
    h_G(f^*E, f^*P) \arrow[r, "T"]
        & h_G(f^*\hat E, f^*\hat P)
\end{tikzcd}
\quad
\begin{tikzcd}[column sep={8em,between origins},row sep=2em]
    h_G(E, P) \arrow[r, "T"] \arrow[d, "\alpha^*", swap]
        & h_G(\hat E, \hat P) \arrow[d, "\alpha^*"]
    \\
    h_H(E, P) \arrow[r, "T"]
        & h_H(\hat E, \hat P) 
\end{tikzcd}
\]
\end{proposition}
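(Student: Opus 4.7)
The plan is to check naturality of each of the three constituents of $T = \hat p_! \circ u^* \circ p^*$ separately and then compose. Both $f^*$ and $\alpha^*$ are operations on twisted equivariant cohomology that, by the axioms collected in Appendix \ref{appendix}, are functorial and commute with pullback along equivariant maps and with the action of twist morphisms. So the naturality of the first two steps reduces to unwinding definitions; the real content is the compatibility of the push-forward $\hat p_!$ with $f^*$ and $\alpha^*$.

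\textbf{Naturality with respect to $f\colon Y\to X$.} Pulling back the T-duality triple along $f$ yields a cube of $G$-spaces in which the front square involving $\hat p$ is Cartesian:
\[
\begin{tikzcd}[column sep=3em,row sep=1.5em]
f^*(E\times_X\hat E)\arrow[r,"F"]\arrow[d,"\hat q",swap] & E\times_X\hat E\arrow[d,"\hat p"] \\
f^*\hat E\arrow[r,"f_{\hat E}"] & \hat E
\end{tikzcd}
\]
where $\hat q$ is the base change of $\hat p$ along $f_{\hat E}$. The pulled-back twist automorphism is $F^*u\colon q^*f^*P\to \hat q^*f^*\hat P$. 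For $p^*$, the identification $F^*\circ p^*=q^*\circ f_E^*$ is functoriality of pullback; the same holds once twists are included. For $u^*$, the fact that $(F^*u)^*=F^*\circ u^*$ at the level of twisted cohomology is the functoriality axiom for twist morphisms. For $\hat p_!$, what we need is the base-change identity $f_{\hat E}^*\circ \hat p_! = \hat q_!\circ F^*$ for the fiber-integration along the equivariant $S^1$-bundle; this holds for any twisted equivariant cohomology theory satisfying the push-forward axioms in the appendix, because the square above is Cartesian with the vertical arrows proper oriented $S^1$-bundles. Composing the three naturality squares yields the left-hand commuting diagram.

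\textbf{Naturality with respect to $\alpha\colon H\to G$.} Restriction along $\alpha$ is an additive functor between twisted equivariant cohomology theories that leaves the underlying spaces, maps and twists intact and only relaxes the group acting. In particular, $\alpha^*$ tautologically commutes with the pullback $p^*$ and with the twist automorphism $u^*$ (one uses the same formulas, evaluated on the restricted theory). The push-forward $\hat p_!$ is defined by an appropriate Thom class / fiber-integration construction which depends only on the underlying equivariant $S^1$-bundle and its orientation, so $\alpha^*\circ\hat p_!=\hat p_!\circ\alpha^*$ as well. Composing gives the right-hand commuting diagram.

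\textbf{Main obstacle.} The one non-formal input is the base-change identity $f_{\hat E}^*\circ\hat p_!=\hat q_!\circ F^*$ along a Cartesian square of $G$-equivariant principal $S^1$-bundles. This is a standard property of push-forward in twisted equivariant cohomology and is part of the axiomatic setup recorded in the appendix; for the concrete model of twisted equivariant K-theory to be used later in the paper, it will be verified directly from the construction of $\hat p_!$ in Section \ref{sec:Ktheory}.
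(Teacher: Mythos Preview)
Your proposal is correct and follows exactly the same strategy as the paper: decompose $T=\hat p_!\circ u^*\circ p^*$ and invoke the naturality of each factor. The paper's own proof is a single sentence (``This follows directly from the naturality properties of $p^*$, $u^*$ and $\hat p_!$''), whereas you have spelled out in detail which naturality statements are needed---in particular the base-change identity for $\hat p_!$, which the paper simply records as the axiom that the push-forward ``is natural with respect to pullbacks'' in Appendix~\ref{appendix}. Your additional remark that this base-change identity would need direct verification in the concrete K-theory model of Section~\ref{sec:Ktheory} is a fair caveat that the paper itself leaves implicit.
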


\begin{proof}
This follows directly from the naturality properties of $p^*$, $u^*$ and $\hat p_!$.
\end{proof}

\section{T-Admissibility}

Following Bunke and Schick \cite{BunkeSchick05}, we introduce a notion of T-admissibility for equivariant T-duality triples. 
Being T-admissible means that the T-duality transformation is an isomorphism for 0-cells in a $G$-CW-complex.
This will in turn imply that the T-duality transformation is an isomorphism for all finite $G$-CW-complexes.

\begin{definition}
A twisted equivariant cohomology theory is $G$-T-admissible if for each closed
subgroup $H \subseteq G$, the T-duality transformation is an isomorphism for
all pairs (with circle fiber) over the one-point space with trivial $H$-action.
\end{definition}

As a somewhat trivial example, Borel cohomology and Borel equivariant K-theory are T-admissible because they are defined via non-equivariant cohomology groups.
In general, it is difficult to prove $T$-admissibility because the equivariant T-duality over a point is still highly non-trivial; $G$-equivariant T-duality over a point is the same as T-duality over $BG$.

We will show that if a $G$-equivariant cohomology theory is T-admissible, then the T-duality transformation is an isomorphism for all $G$-CW-complexes. This will be proven by induction on the number of cells. The following two results give the base case for this induction. They essentially state when $H \subseteq G$ is a subgroup, a $H$-equivariant pair over a point can be induced up to a $G$-equivariant pair over $G/H$ and that every pair on $G/H$ arises this way.

\begin{lemma}\label{lemma:induction}
Let $H \subseteq G$ be a closed subgroup.
The function
\[
    \bigl(E_0, P_0 \bigr) 
    \longmapsto 
    \bigl(E_0 \times_H G, \Ind_H^G(P_0) \bigr)
\]
induces a bijection between isomorphism classes of $H$-equivariant pairs over
a $H$-space $X$ and the isomorphism classes of $G$-equivariant pairs over $X \times_H G$.
\end{lemma}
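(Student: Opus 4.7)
The plan is to exhibit an explicit inverse to induction, namely restriction along the $H$-equivariant inclusion $\iota \colon X \hookrightarrow X \times_H G$, $x \mapsto [x, e]$. Given a $G$-equivariant pair $(E, P)$ over $X \times_H G$, I will form the $H$-equivariant pair $(\iota^*E, \iota^*P)$ over $X$, and verify that induction and restriction are mutually inverse on isomorphism classes.

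The first main step is to prove the analogous bijection for the underlying $S^1$-bundles. The canonical map $\iota^*E \times_H G \to E$, combining the $G$-action on $E$ with the inclusion $\iota^*E \hookrightarrow E$, should be a $G$-equivariant bundle isomorphism: every point of $X \times_H G$ has the form $g \cdot [x,e]$, and $G$-equivariance identifies the fiber of $E$ over $g \cdot [x,e]$ with the $g$-translate of the fiber over $[x,e]$. Conversely, $\iota^*(E_0 \times_H G)$ is naturally identified with $E_0$ since $\iota(X) \subset X \times_H G$ is the preimage of $eH$ under the projection to $G/H$. This establishes the category-theoretic equivalence between $G$-equivariant bundles over $X \times_H G$ and $H$-equivariant bundles over $X$.

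Next I would promote the argument to pairs by handling the twists. The axiomatic induction $\Ind_H^G$ should come with a restriction compatibility so that $\iota^*\Ind_H^G(P_0) \cong P_0$; conversely, any $G$-equivariant twist $P$ on $E_0 \times_H G$ should satisfy $\Ind_H^G(\iota^*P) \cong P$ via the bundle isomorphism above. For the model of twists as $G$-equivariant principal $PU(\H)$-bundles, this can be checked at the classifying-space level using
\[
    H^3_G(E_0 \times_H G; \Z) \cong H^3_H(E_0; \Z),
\]
which follows from the natural homeomorphism $EG \times_G (E_0 \times_H G) \cong EG \times_H E_0$ together with the fact that $EG$ is also a model for $EH$, since $H \subseteq G$ acts freely on the contractible space $EG$.

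The main obstacle will be the twist half of the argument: the paper works with an axiomatic notion of twist, so one must confirm that the axiomatized induction functor is genuinely inverse, up to isomorphism, to restriction along $\iota$. Once this compatibility is in hand, combining it with the bundle-level equivalence gives the desired bijection on isomorphism classes of pairs, and functoriality of $\Ind_H^G$ ensures that the bijection respects the equivalence relation of isomorphism.
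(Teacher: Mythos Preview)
Your proposal is correct and follows essentially the same approach as the paper: first establish the bijection for $S^1$-bundles via induction/restriction, then handle the twists. The ``main obstacle'' you flag---that the axiomatic $\Ind_H^G$ be inverse to restriction along $\iota$---is in fact not an obstacle at all: it is precisely the content of twist axiom (4) in Appendix~\ref{sec:equitwists}, which \emph{defines} $\Ind_H^G$ as the inverse of the equivalence $\twist_G(X\times_H G)\to\twist_H(X)$ induced by $\iota$, so the paper simply invokes this axiom rather than verifying it.
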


\begin{proof}
Let $(E,P)$ be a $G$-equivariant pair over $X \times_H G$.
If $E_0 \to X$ is a $H$-equivariant principal $S^1$-bundle then $E_0 \times_H G \to X \times_H G$ is a $G$-equivariant principle $S^1$-bundle, and this construction induces a bijection between isomorphism classes of $H$-equivariant bundles on $X$ and $G$-equivariant bundles on $X \times_H G$.
Therefore, we can assume that $E \cong E_0 \times_H G$ for a unique (up to isomorphism) $H$-equivariant $S^1$-bundle $E_0 \to X$.
By the twist axioms, there is an induction construction
\[
    \Ind_H^G \colon \twist_H(E_0) \to \twist_G(E_0 \times_H G)
\]
that induces a bijection on isomorphism classes. 
Therefore, $P \in \twist_G(E)$ is isomorphic to $\Ind_H^G(P_0)$ for a unique (up to isomorphism) $P_0 \in \twist_H(E_0)$.
We conclude that $(E,P)$ is isomorphic to $(E_0 \times_H G, \Ind_H^G(P_0))$.
\end{proof}

This lemma tells us that we can induce $H$-equivariant pairs over a $H$-space $X$ to $G$-equivariant pairs over $X \times_H G$:
\[
P \to E \to X 
\quad \leadsto  \quad 
\Ind_H^G(P) \to E \times_H G \to X \times_H G.
\]
In the special case where $X$ is a point, Lemma \ref{lemma:induction} says that this is an equivalence between $H$-equivariant pairs over a point and $G$-equivariant pairs over $G/H$.

\begin{theorem}
If an equivariant cohomology theory $h^*_-(-)$ is $G$-T-admissible and $H
\subseteq G$ is a closed subgroup, then the T-duality transformation is an
isomorphism for all pairs over the base space $G/H$.
\end{theorem}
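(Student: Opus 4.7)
The plan is to reduce the statement to the base case of $H$-equivariant pairs over a single point, where the $G$-T-admissibility hypothesis applies directly. By Lemma~\ref{lemma:induction} with $X$ a point carrying the trivial $H$-action, every $G$-equivariant pair over $G/H$ is isomorphic to one of the form $(E_0 \times_H G, \Ind_H^G P_0)$ for some $H$-equivariant pair $(E_0, P_0)$ over a point, and this $(E_0, P_0)$ is unique up to isomorphism. Since induction commutes with fiber products, with pullback of twists, and preserves the Poincar\'e bundle condition, the uniqueness of T-duals identifies the T-dual pair with $(\hat E_0 \times_H G, \Ind_H^G \hat P_0)$, and the twist automorphism $u$ is induced from a corresponding $H$-equivariant $u_0$ over the point.

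The next ingredient is a natural induction isomorphism
\[
\Phi \colon h_G^*\bigl(Y \times_H G,\, \Ind_H^G Q\bigr) \xrightarrow{\,\cong\,} h_H^*(Y, Q)
\]
for $H$-equivariant twisted pairs $(Y, Q)$. This is a standard Wirthm\"uller-style feature built into the axiomatic framework for twisted equivariant cohomology theories in the appendix; for twisted equivariant K-theory in particular it comes from a Morita equivalence between the $C^*$-algebras of sections associated to $(Y \times_H G, \Ind_H^G Q)$ and to $(Y, Q)$. Using $\Phi$, one forms the square
\[
\begin{tikzcd}[row sep=2em, column sep=3em]
h_G^*(E_0 \times_H G, \Ind_H^G P_0) \arrow[r, "T_G"] \arrow[d, "\cong"']
  & h_G^{*-1}(\hat E_0 \times_H G, \Ind_H^G \hat P_0) \arrow[d, "\cong"] \\
h_H^*(E_0, P_0) \arrow[r, "T_H"'] & h_H^{*-1}(\hat E_0, \hat P_0),
\end{tikzcd}
\]
whose vertical arrows are instances of $\Phi$. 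I would verify that this square commutes by checking that each of the three ingredients $p^*$, $u^*$, and $\hat p_!$ appearing in the definition of the T-duality transformation is natural with respect to $\Phi$. Granting this, the bottom arrow $T_H$ is an isomorphism by $G$-T-admissibility applied to the closed subgroup $H$, so $T_G$ is an isomorphism as well.

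The hard part will be verifying that the push-forward $\hat p_!$ commutes with the induction isomorphism $\Phi$. The pullback $p^*$ and the twist automorphism $u^*$ are transparently functorial and pose no difficulty, but $\hat p_!$ is constructed from a Thom class (equivalently, from a KK-theoretic orientation of the underlying $S^1$-bundle), and one must check that the relevant Thom class for the induced bundle corresponds to the original under $\Phi$. This compatibility should be part of the package of axioms in Appendix~\ref{appendix}, and for twisted equivariant K-theory it follows from the naturality of the KK-theoretic Thom isomorphism under change of groups.
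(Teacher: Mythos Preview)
Your proposal is correct and follows essentially the same approach as the paper: both arguments reduce to the $H$-equivariant triple over a point via Lemma~\ref{lemma:induction}, set up the same commutative square with the induction isomorphisms on the sides, and conclude from $G$-T-admissibility that the bottom arrow, hence the top, is an isomorphism. The paper's proof is terser---it simply asserts that ``the T-duality transformation commutes with the induction isomorphism'' without unpacking this into the separate naturality checks for $p^*$, $u^*$, and $\hat p_!$ that you outline---but the strategy is identical.
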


\begin{proof}
Let $(E, P)$ and $(\hat E, \hat P)$ be $G$-equivariant T-dual pairs over $G/H$. Let $(E_0, P_0)$ and $(\hat E_0, \hat P_0)$ be the corresponding $H$-equivariant pairs over a point.
Consider the following diagram:
\[
\begin{tikzcd}
    h^*_G(E,P) \arrow[r, "T"] \arrow[d, "\cong"]
    & h^{*-1}_G(\hat E, \hat P) \arrow[d, "\cong"] \\
    h^*_H(E_0, P_0) \arrow[r, "T"] \arrow[r, "\cong", swap] 
    & h^{*-1}_H(\hat E_0, \hat P_0).
\end{tikzcd}
\]
The diagram commutes because the T-duality transformation commutes with the
induction isomorphism.
The bottom arrow is an isomorphism by T-admissibility.
The vertical maps are isomorphisms by the induction axiom.
Hence the top arrow is an isomorphism, as required.
\end{proof}

\begin{theorem}\label{thm:Tadmissibleiso}
If a twisted $G$-equivariant cohomology theory is T-admissible then the T-duality transformation is an isomorphism for finite $G$-CW-complexes.
\end{theorem}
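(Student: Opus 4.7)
The plan is to induct on the dimension of the finite $G$-CW-complex $X$, using a Mayer--Vietoris argument in the inductive step. The base case is $\dim X = 0$, where $X$ is a finite disjoint union of orbits $G/H_i$: the T-duality transformation is an isomorphism on each $G/H_i$ by the preceding theorem, and the additivity axiom for twisted equivariant cohomology then yields that it is an isomorphism on $X$.

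For the inductive step, I would suppose the result holds for all finite $G$-CW-complexes of dimension strictly less than $n$ and let $X$ have dimension $n$. Writing
\[
X = X^{n-1} \cup_\phi \Big(\bigsqcup_i G/H_i \times D^n\Big),
\]
where $X^{n-1}$ is the $(n-1)$-skeleton and $\phi$ is defined on $\bigsqcup_i G/H_i \times S^{n-1}$, I would take $A$ to be an open $G$-invariant neighborhood of $X^{n-1}$ that deformation retracts onto $X^{n-1}$, and $B$ the disjoint union of the open interiors of the $n$-cells, each $G$-equivariantly contracting onto its core orbit $G/H_i$. Then $A \cap B$ deformation retracts onto $\bigsqcup_i G/H_i \times S^{n-1}$, and the pairs $(E,P)$ and $(\hat E,\hat P)$ restrict to $G$-equivariant T-duality triples on each of $A$, $B$, and $A\cap B$.

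Applying the Mayer--Vietoris axiom for twisted equivariant cohomology (together with homotopy invariance) to the cover $X = A \cup B$ produces long exact sequences for $(E,P)$ and $(\hat E, \hat P)$, and naturality of $p^*$, $u^*$, and $\hat p_!$ assembles these into a commuting ladder. By the inductive hypothesis the T-duality transformation is an isomorphism on $X^{n-1}$ and on $\bigsqcup_i G/H_i \times S^{n-1}$ (both of dimension strictly less than $n$), and by the preceding theorem together with homotopy invariance and additivity it is an isomorphism on $B \simeq_G \bigsqcup_i G/H_i$. The five lemma then completes the inductive step.

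The main obstacle is purely axiomatic: one must verify that the axioms collected in the appendix --- in particular a Mayer--Vietoris sequence (or, equivalently, a long exact sequence of a CW pair) compatible with equivariant twists, plus homotopy invariance and additivity --- genuinely hold for twisted equivariant cohomology theories, and that pullback, twist isomorphism, and pushforward are sufficiently natural with respect to equivariant inclusions of subcomplexes for the ladder diagram to commute. Granted these, the argument above is routine.
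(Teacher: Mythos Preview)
Your proposal is correct and follows essentially the same approach as the paper: the paper's proof is itself a brief sketch that inducts on the number of $G$-CW-cells (rather than on dimension, as you do), attaches one cell at a time, checks that the T-duality transformation is natural with respect to pullbacks and the Mayer--Vietoris boundary operator, and applies the five lemma. The only cosmetic difference is your choice to attach all $n$-cells at once, which requires the (harmless) extra appeal to finite additivity; otherwise the arguments coincide.
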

\begin{proof}
This is proven in the same way as in \cite{BunkeSchick05}, except now we use induction on the number of $G$-CW-cells.
The base case is true by the previous lemma.
Then one checks that the T-duality transformation is natural with respect to pullbacks and the boundary operator in the Mayer-Vietoris sequence.
The induction step is proven by attaching a cell and using the 5-lemma on the resulting Mayer-Vietoris sequence.
\end{proof}

\begin{example}
$\Z$-equivariant K-theory offers a baby example of T-admissibility, since, for cohomological reasons, there are no non-trivial $\Z$-equivariant pairs over a point.
\end{example}

\section{Twisted Equivariant K-Theory}\label{sec:Ktheory}

For the remainder of the paper, we will focus our attention on twisted equivariant K-theory.
In this section, we provide a definition and describe how one defines the push-forward along principal $S^1$-bundles.
Such maps are required to define the T-duality transformation.

Let $X$ be a space acted on by a compact group $G$ and let $P \to X$ be a stable $G$-equivariant principal $PU(\H)$-bundle.
For simplicity, we call these bundles $G$-equivariant twists.
Stable equivariant projective unitary bundles are defined, for instance, in \cite{BEJU:universaltwist}*{Def 2.2}. 
We provide an in-depth discussion of these in Appendix \ref{appendix}.

Let $\mathcal K$ denote the space of compact operators on $\H$.
$PU(\H)$ acts on $\mathcal K$ via conjugation, so that for each
$G$-equivariant $PU(\H)$-bundle $P \to X$ there is an associated
$G$-equivariant bundle of $C^*$-algebras (isomorphic to $\mathcal K$) $P \times_{PU(\H)} \mathcal K$.
Let $\Gamma_0(P \times_{PU(\H)} \mathcal K)$ denote the C*-algebra of sections of $P \times_{PU(\H)} \mathcal K$ that vanish at infinity. This is a $G$-equivariant C*-algebra with the action given by
\[
    (g \cdot \sigma)(x) = \sigma(x\cdot g)g^{-1},
\]
where $g \in G$, $\sigma \in \Gamma_0(P \times_{PU(\H)} \mathcal K)$ and $x \in X$.

\begin{definition}\label{def:equiv_twisted_K}
The $P$-twisted $G$-equivariant K-theory of $X$ is defined as the $G$-equivariant K-theory of $\Gamma_0(P \times_{PU(\H)} \mathcal K)$:
\[
    K^*_G(X,P) := K^G_* \bigl( \Gamma_0(P \times_{PU(\H)} \mathcal{K}) \bigr).
\]
\end{definition}

This is motivated by Rosenberg's definition of twisted K-theory \cite{Rosenberg:TKT}*{\textsection 2}.
The equivariant version appears in, for instance, \cite{Karoubi:oldandnew}*{\textsection 5.4} and \cite{Meinrenken}*{\textsection 2.3}. 
There are of course other formulations of twisted equivariant K-theory; for example, via equivariant sections of the bundle of Fredholm operators associated with $P$ \cite{AtiyahSegal:TKT}*{\textsection 7}.
The formulation we have chosen allows us to access tools in non-commutative geometry.
Indeed, by the Green-Julg theorem, because $G$ is compact, twisted equivariant K-theory simply becomes the ordinary K-theory of some $C^*$-algebra, namely the crossed product of $G$ with the above algebra of sections.

To define a push-forward map in any cohomology theory, one needs a Thom isomorphism.
We shall only briefly explain why a Thom isomorphism exists in twisted
equivariant K-theory; further details are available in the first author's PhD thesis \cite{Dove:phd}. 
The idea is that the Thom isomorphism already exists for groupoid equivariant KK-theory \cite{Moutuou} and that our twisted K-groups can be interpreted as $(G \rtimes X)$-equivariant KK-groups:
\[
    KK^G_* \bigl( \C,  \Gamma_0(P_{\mathcal K}) \bigr)
    \cong KK^{G \rtimes X}_*\bigl( C_0(X), \Gamma_0(P_{\mathcal K}) \bigr)
\]
Here, we are writing $P_{\mathcal K} := P \times_{PU(\H)} \mathcal K$.
We refer the reader to \cite{LeGall} for the definition of groupoid equivariant K-theory.
Now we can apply the following theorem, which is just a specific case of \cite{Moutuou}*{Theorem 8.1}:

\begin{theorem}
Let $\pi \colon V \to X$ be a K-oriented real $G$-equivariant vector bundle of rank $n$.
There exists a natural Thom isomorphism
\[
   K_G^*(X,P) \cong K_G^{*+n}(V, \pi^*P).
\]
\end{theorem}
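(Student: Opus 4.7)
The plan is to reduce the statement to the groupoid equivariant KK-theoretic Thom isomorphism of Moutuou, using the dictionary already highlighted in the excerpt. First I would rewrite both sides of the claimed isomorphism as $(G \rtimes -)$-equivariant KK-groups:
\[
    K^*_G(X,P) = KK^G_*\bigl(\C, \Gamma_0(P_{\mathcal K})\bigr) \cong KK^{G \rtimes X}_*\bigl(C_0(X), \Gamma_0(P_{\mathcal K})\bigr),
\]
\[
    K^{*+n}_G(V, \pi^*P) \cong KK^{G \rtimes V}_*\bigl(C_0(V), \Gamma_0(\pi^*P_{\mathcal K})\bigr).
\]
Here $\pi^*P_{\mathcal K}$ denotes the pullback of the bundle of compact operators along $\pi\colon V\to X$; since $V\to X$ is $G$-equivariant, the transformation groupoid $G\rtimes V$ maps to $G\rtimes X$ via $\pi$, and one checks that this pullback on the C*-algebra side matches pullback of twists in the groupoid-equivariant framework.

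Next I would apply Moutuou's Thom isomorphism (Theorem 8.1 of \cite{Moutuou}) to $\pi\colon V \to X$, viewed as a $(G\rtimes X)$-equivariant K-oriented real vector bundle of rank $n$. This yields a natural isomorphism
\[
    KK^{G\rtimes X}_*\bigl(C_0(X), \Gamma_0(P_{\mathcal K})\bigr) \xrightarrow{\ \cong\ } KK^{G\rtimes V}_{*+n}\bigl(C_0(V), \Gamma_0(\pi^*P_{\mathcal K})\bigr),
\]
and composing with the two dictionary identifications above produces the Thom isomorphism in twisted equivariant K-theory. Naturality is then inherited directly from the naturality of Moutuou's Thom isomorphism and of the passage between the C*-algebraic and groupoid-equivariant KK pictures.

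The main obstacle I expect is not the construction of the isomorphism itself, which is handed to us by \cite{Moutuou}, but the bookkeeping needed to verify that the two languages agree. Concretely, one needs to check (i) that a $G$-equivariant K-orientation of $V$ in the sense used for twisted equivariant K-theory induces a $(G\rtimes X)$-equivariant K-orientation in Moutuou's sense, so that his hypothesis is met; (ii) that the pulled-back section algebra $\Gamma_0(\pi^*P_{\mathcal K})$ is identified with $\pi^*\Gamma_0(P_{\mathcal K})$ as a $(G\rtimes V)$-algebra; and (iii) that the Green–Julg-type identification between $KK^G$ and $KK^{G\rtimes X}$ is compatible with pullback along $\pi$. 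Once these compatibilities are established the theorem follows formally, and since the work is spelled out in detail in \cite{Dove:phd}, only a brief indication is needed here.
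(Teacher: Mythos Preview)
Your proposal is correct and matches the paper's approach exactly: the paper does not give an independent proof but simply notes that the twisted equivariant K-groups can be rewritten as $(G\rtimes X)$-equivariant KK-groups and then invokes \cite{Moutuou}*{Theorem 8.1}, referring to \cite{Dove:phd} for the bookkeeping details you outline. Your identification of the compatibility checks (i)--(iii) as the only real content is apt, and is precisely what the paper defers to the thesis.
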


We remark that a Thom isomorphism exists even if the vector bundle is not K-oriented; one just needs to introduce an additional twist on $X$ coming from the Clifford bundle of $V$.
This isn't required for our purposes.

With the Thom isomorphism established, it is straightforward to define the push-forward along K-oriented maps $f \colon X \to Y$.
One uses the standard Pontryagin-Thom construction: choose an embedding of $X$ into $\R^N$, then factor $f$ through $\R^N \times Y$ and use the Thom isomorphism for the normal bundle together with the Pontryagin-Thom collapse map.
Let us carry out this process for $G$-equivariant principal $S^1$-bundles.
These are all K-oriented and the push-forward can be constructed relatively explicitly. 

Let $p \colon E \to X$ be a $G$-equivariant principal $S^1$-bundle.
We can always factor $p$ through the associated complex line bundle:
\begin{center}
\begin{tikzpicture}
    \node (E) at (0,0) {$E$};
    \node (L) at (1.5,1) {$E \times_{S^1} \C$};
    \node (X) at (3,0) {$X$};
    \draw [->] (E) -- (X) node [midway, above] {\scriptsize $p$};
    \draw [right hook ->] (E) -> (L) node [midway, above, ,xshift=-1mm] {\scriptsize $i$};
    \draw [->] (L) -> (X) node [midway, above, xshift=1mm] {\scriptsize $\pi$};
\end{tikzpicture}
\end{center}
The push-forward $p_!$ is defined by pushing forward along the embedding $i$ and then using the Thom isomorphism for $\pi$.
The second part is clear: the Thom isomorphism in equivariant K-theory exists for locally compact $G$-spaces \cite{Segal:EKT}*{\textsection 3}.
Thus we focus on the push-forward along the embedding $i \colon E \to E \times_{S^1} \C$. 

\begin{lemma}
Let $p \colon E \to X$ be a $G$-equivariant principal $S^1$-bundle.
The normal bundle of the embedding $i\colon E \to E \times_{S^1} \C$ is
isomorphic to $E \times \R$ with trivial action of $G$.
\end{lemma}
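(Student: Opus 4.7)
The plan is to identify $i(E)$ as the unit sphere bundle inside the associated complex line bundle $L := E\times_{S^1}\C$, and then trivialise its normal bundle by a radial section in each fibre.

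First I would make the induced $G$-action on $L$ explicit. Because the $G$- and $S^1$-actions on $E$ commute (by definition of a $G$-equivariant principal $S^1$-bundle), the formula $g\cdot[e,z] := [ge,z]$ is well-defined on $L$, and crucially the $\C$-coordinate is untouched by $G$. This observation is the essential source of the triviality of the $G$-action on the normal bundle.

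Next I would introduce the map
\[
    \Phi\colon E\times\R_{>0}\longrightarrow L\setminus\{0\text{-section}\},\qquad (e,r)\longmapsto [e,r],
\]
and show that it is a $G$-equivariant diffeomorphism, with $G$ acting trivially on $\R_{>0}$. Injectivity reduces to the observation that the defining relation $(e,r)\sim (e\cdot t^{-1}, tr)$ with both $r$ and $tr$ positive real forces $t=1$; surjectivity follows from the fibrewise polar decomposition $[e,z]=[e\cdot(z/|z|),\,|z|]$ for $z\neq 0$. Under $\Phi$ the embedding $i\colon e\mapsto[e,1]$ is identified with the slice inclusion $E\xrightarrow{\id\times\{1\}}E\times\R_{>0}$, whose normal bundle is visibly the trivial real line bundle $E\times\R$ carrying the trivial $G$-action on the $\R$-factor.

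The only real obstacle is verifying that $\Phi$ is smooth with smooth inverse, but this is entirely fibrewise and reduces to the standard polar diffeomorphism $\C\setminus\{0\}\cong S^1\times\R_{>0}$; commutativity of the $G$- and $S^1$-actions ensures that no equivariant complication arises, and the identification of the normal bundle with $E\times\R$ with trivial $G$-action follows at once.
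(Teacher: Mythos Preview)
Your proof is correct and, in fact, more economical than the paper's. The paper argues at the level of tangent bundles: it equips $\tilde E = E\times_{S^1}\C$ with a fibrewise inner product, identifies $i(E)$ as the unit sphere bundle, characterises $i_*(TE)$ inside $T\tilde E|_{i(E)}$ as the vectors orthogonal to the radial direction, and then builds an explicit map $(e,u)\mapsto(e,\langle e,Pu\rangle)$ using a chosen projection $P$ onto the vertical sub-bundle to exhibit the quotient $T\tilde E|_{i(E)}/i_*(TE)\cong E\times\R$. You bypass all of this by producing a $G$-equivariant tubular neighbourhood directly: the diffeomorphism $\Phi\colon E\times\R_{>0}\to L\setminus 0$ identifies $i$ with a slice inclusion and reads off the normal bundle immediately. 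What you gain is that no auxiliary choice (projection, connection) enters the argument, and the $G$-triviality on the normal factor is transparent from the outset. It is worth noting that the paper does invoke exactly your diffeomorphism $E\times\R\cong E\times_{S^1}\C^*$ in the paragraph following the lemma, when constructing the push-forward $i_!$; so your argument effectively merges the lemma and that subsequent step into one.
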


\begin{proof}

For notational ease, let $\tilde E = E \times_{S^1} \C$.
We write elements of $T\tilde E$ as pairs $(e, u)$ with $e \in \tilde E$ and $u \in T_e\tilde E$.
Choose the following inner product on $\tilde E$:
\[
    \bigl\langle [e, z_1], [e, z_2] \bigr\rangle := \operatorname{Re}\bigl(\bar{z}_1z_2\bigr).
\]
This comes from the standard inner product on $\R^2$ after the canonical identification $\C = \R^2$.
This inner product satisfies the following properties
\begin{gather}
    i(E) = \bigl\{ e \in E \times_{S^1} \C : \lVert e \rVert = 1 \bigr\} \label{prop1} \\
    i_*(T_v E) = \bigl\{ (e,u) \in (T_v\tilde E)|_{i(E)} : \langle e, u \rangle= 0 \bigr\} \label{prop2}
\end{gather}
We explain property \eqref{prop2}: $T_vE$ and $T_v\tilde E$ denote the vertical sub-bundle of the tangent bundles on $E$ and $\tilde E$, respectively.
The product $\langle e,u \rangle$ makes sense after identifying $(T_v\tilde E)_e$ with $\tilde E_{p(e)}$.
This is possible because there is a canonical isomorphism $T_v\tilde E \cong \pi^*\tilde E$.
Property \eqref{prop1} is straightforward to show.
For \eqref{prop2}, it is a matter of checking the condition locally.

These conditions are motivated by the standard embedding $S^1 \hookrightarrow \C$.
The image of $S^1$ is precisely the unit complex numbers and the tangent vectors to $u \in S^1$ are precisely those orthogonal to $u$. 

From here, we can write the map directly.
Let $P \colon T\tilde E \to T_v\tilde E$ be a choice of projection map onto the vertical sub-bundle.
Consider the map
\begin{equation}\label{normalbundlemap}
    T\tilde E|_{i(E)} \to E \times \R, 
    \quad
    (e, u) \mapsto \bigl(e, \langle e, Pu \rangle \bigr),
\end{equation}
again making the identification $(T_v\tilde E)_e = \tilde E_{\pi(e)}$ for the inner product to make sense.
An element $(e,u) \in i_*(TE)$ is mapped to $(e, 0)$ because of \eqref{prop2}, and thus \eqref{normalbundlemap} descends to a map on the normal bundle $T\tilde E|_{i(E)} / i_*(TE)$.
This is an isomorphism; the inverse sends $(e, t)$ to $(e, te)$, where $te \in E_{\pi(e)} \cong (T_v\tilde E)_e$, hence lives in the correct space, $T_e\tilde E$.
\end{proof}

With this done, we define the push-forward along $i \colon E \to E \times_{S^1} \C$.
Using the tubular neighbourhood theorem, choose a neighbourhood $U \supseteq i(E)$ such that $U \cong N_i$, where $N_i \cong E \times \R$ is the normal bundle.
We can in fact choose $U = E \times_{S^1} \C^*$, because $E \times \R \cong E \times_{S^1} \C^*$ via a map $(e, t) \mapsto [e, \gamma(t)]$, where $\gamma \colon \R \to (0,\infty)$ is any homeomorphism.
Then, the push-forward is the composition
\[
    i_! \colon K_G^*(E) \cong K_G^{*+1}(E \times \R) \cong K_G^{*+1}(E \times_{S^1} \C^*) \to K_G^{*+1}(E \times_{S^1} \C).
\]
The final map is the extension map; it can be described by extending compactly supported sections on $E \times_{S^1} \C^*$ to $E \times_{E^1} \C$ by mapping $[e,0]$ to $0$.

Therefore, the final push-forward map is the composition
\[
    p_! \colon K^*_G(E) \xrightarrow{\,\, i_! \,\,} K_G^{*+1}(E \times_{S^1} \C) \xrightarrow{\,\, \pi_! \,\,} K_G^{*+1}(X),
\]
where $\pi_!$ is the inverse of the Thom isomorphism $K_G(X) \cong K_G(E \times_{S^1} \C)$.
An important remark is that this works just the same for twisted equivariant K-theory; no further orientability conditions are required to define a Thom isomorphism in twisted equivariant K-theory and since this push-forward is defined using two Thom isomorphisms, everything works in this setting as well.

\begin{lemma}\label{lem:trivialtrans}
In K-theory, the T-duality transformation for the trivial T-duality diagrams
described in Definition \ref{def:trivial} can be described explicitly:

First note that we can rewrite the suspension isomorphism in Corollary \ref{cor:suspension} for the product with
$S^1$ (with trivial $G$-action) as a K\"unneth isomorphism
\begin{equation*}
  K^*_G(X\times S^1, \pr_X^*Q) \xrightarrow{\cong} K^*_G(X,Q)\otimes K^*(S^1),
\end{equation*}
where we use the $\mathbb{Z}/2$-graded tensor product, and consequently also
\begin{equation*}
  K^*_G(X\times S^1\times S^1, \pr_X^*Q) \xrightarrow{\cong} K^*_G(X,Q)\otimes
  K^*(S^1\times S^1).
\end{equation*}
The T-duality transformation for the twist automorphism pulled back from
$S^1\times S^1$ then becomes
\begin{equation}\label{eq:trivial_T}
\begin{tikzcd}
    K^*_G(X \times S^1,\pr_X^*Q) 
        \rar["\cong"] \dar["T",swap]
    &K^*_G(X ,Q) \otimes K^*(S^1)
        \dar["\id \otimes T"] \\
    K^{*-1}_G(X \times S^1,\pr_X^*Q)
        \rar["\cong"]
    &K^{*}_G(X, Q) \otimes K^{*-1}(S^1).
\end{tikzcd}
\end{equation}

In particular, this is an isomorphism because the T-duality transformation is an isomorphism for non-equivariant T-duality triples.
\end{lemma}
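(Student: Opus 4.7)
The plan is to reduce the computation to the non-equivariant T-duality transformation on the trivial triple over a point by checking that each of the three operations composing $T = \hat p_! \circ u^* \circ p^*$ splits as the tensor product of the identity on $K^*_G(X,Q)$ with the corresponding operation on the $S^1$-factor. Granted this, $T$ becomes $\id \otimes T_0$, where $T_0 \colon K^*(S^1) \to K^{*-1}(S^1)$ is the non-equivariant T-duality transformation for the trivial triple over a point, and the isomorphism claim then follows from the non-equivariant result of \cite{BunkeSchick05}.

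First I would establish the K\"unneth isomorphisms for $X \times S^1$ and $X \times S^1 \times S^1$ with twist pulled back from $X$. These follow from iterating the suspension isomorphism of Corollary \ref{cor:suspension} via the standard Mayer-Vietoris decomposition of $S^1$ into two contractible open arcs. No Tor-term appears since $K^*(S^1)$ is free abelian and concentrated in degrees $0$ and $1$; thus the external product yields isomorphisms $K^*_G(X,Q) \otimes K^*(S^1) \xrightarrow{\cong} K^*_G(X \times S^1, \pr_X^*Q)$ and similarly for $S^1 \times S^1$.

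Next I would verify the product decomposition of each map. Under $E \times_X \hat E = X \times S^1 \times S^1$, the projections are $p = \id_X \times \pr_1$ and $\hat p = \id_X \times \pr_2$ (where $\pr_1, \pr_2 \colon S^1 \times S^1 \to S^1$ are the two coordinate projections), so naturality of pullback gives $p^* = \id \otimes \pr_1^*$ and $\hat p^* = \id \otimes \pr_2^*$. The twist automorphism $u = \id \otimes u_0$ is defined precisely as a tensor product in Definition \ref{def:trivial}, and the twist axioms in the appendix imply $u^* = \id \otimes u_0^*$ on K-theory. For the pushforward, the explicit construction in Section \ref{sec:Ktheory} realises $\hat p_!$ as the composition of a Thom isomorphism along the line bundle $\hat E \times_{S^1} \C$ with an open-inclusion extension map; since the line bundle on $X \times S^1$ is pulled back from the line bundle $S^1 \times_{S^1} \C$ on $S^1$ (with $X$ a trivial factor), the corresponding Thom class and extension map are themselves external products, giving $\hat p_! = \id \otimes (\pr_2)_!$.

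The main technical point is the multiplicativity of the pushforward. This can be verified from the construction of the Thom isomorphism in groupoid-equivariant KK-theory of \cite{Moutuou}: the Thom isomorphism is implemented by Kasparov product with a Thom class, and external Kasparov products distribute over composition, so the Thom class of a product bundle factors as the external product of the component Thom classes. Combining the three factorisations, $T = \id_{K^*_G(X,Q)} \otimes T_0$ under the K\"unneth identification, and $T_0$ is an isomorphism by the non-equivariant result of \cite{BunkeSchick05}; hence so is $T$.
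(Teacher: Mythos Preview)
Your proposal is correct and follows essentially the same strategy as the paper: decompose $T=\hat p_!\circ u^*\circ p^*$ and verify that each factor splits as $\id\otimes(\text{non-equivariant analogue})$ under the K\"unneth identification, reducing to the known non-equivariant case. The only notable difference is methodological: the paper obtains the K\"unneth isomorphism and the tensor splitting of $u^*$ directly from the $C^*$-algebraic model, namely from the canonical isomorphism $\Gamma_0(\pr_X^*Q\times_{PU(\H)}\mathcal K)\cong \Gamma_0(Q\times_{PU(\H)}\mathcal K)\otimes C(S^1)$ and the fact that $u=\id\otimes u_0$ acts as a tensor product on the section algebra, whereas you argue topologically via Mayer-Vietoris on $S^1$ and invoke the twist axioms and the multiplicativity of the Thom class in groupoid $KK$-theory for the pushforward.
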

\begin{proof}
  To identify the ``suspension isomorphism'' recall that we define in
  Definition \ref{def:equiv_twisted_K} the twisted equivariant K-theory as the
  equivariant K-theory of the $C^*$-algebra of sections of
  $\pr_X^*Q\times_{PU(\H)}\mathcal{K}$. Now, because the bundle is pulled back from $X$
  to $X\times S^1$, for this section algebra we have the canonical isomorphism
  \begin{equation*}
    \Gamma_0(\pr_X^*Q\times_{PU(\H)}\mathcal{K}) \cong \Gamma_0(Q\times_{PU(\H)}\mathcal{K})
    \otimes C(S^1).
  \end{equation*}
  Using stability, we can also rewrite as an equally natural isomorphism
  \begin{equation*}
    \Gamma_0(\pr_X^*Q\times_{PU(\H)}\mathcal{K}) \cong \Gamma_0(Q\times_{PU(\H)}\mathcal{K})
    \otimes C(S^1,\mathcal{K}).
  \end{equation*}

  For the computation of the T-duality transformation we observe that, by naturality, under
  this isomorphism we get the following commutative diagrams for the pullback
  and push-forward maps:
\[
\begin{tikzcd}
    K^*_G(X \times S^1,\pr_X^*Q) 
        \rar["\cong"] \dar["(\id_X\times \pr_1)^*",swap]
    &K^*_G(X ,Q) \otimes K^*(S^1)
        \dar["\id \otimes \pr_1^*"] \\
    K^{*-1}_G(X \times S^1\times S^1,\pr_X^*Q)
        \rar["\cong"]
    &K^{*}_G(X, Q) \otimes K^{*-1}(S^1\times S^1).
\end{tikzcd}
\]

\[
\begin{tikzcd}
    K^*_G(X \times S^1\times S^1,\pr_X^*Q) 
        \rar["\cong"] \dar["(\id_X\times\pr_2)_!",swap]
    &K^*_G(X ,Q) \otimes K^*(S^1\times S^1)
        \dar["\id \otimes (\pr_2)_!"] \\
    K^{*-1}_G(X \times S^1,\pr_X^*Q)
        \rar["\cong"]
    &K^{*}_G(X, Q) \otimes K^{*-1}(S^1).
\end{tikzcd}
\]

To finish the proof of the commutativity of \eqref{eq:trivial_T} we have to
analyse the automorphism of twists and what it induces on K-theory. In our
model, this is given by an automorphism of the bundle of operator algebras
$\pr_X^*Q\times_{PU(\H)}\mathcal{K} \otimes \mathcal{K}$ over $X\times
S^1\times S^1$ which is the tensor product of the identity on the first tensor
factor and the ``standard'' automorphism $u$ of $S^1\times S^1\times
\mathcal{K}$ which satisfies the Poincar\'e bundle condition. Consequently,
the induced map on the section algebras and then on K-theory splits as the
tensor product of $\id$ on the $X$-factor and the standard automorphism of the
trivial twist on the $S^1\times S^1$-factor. The result follows.

\end{proof}

\section{\texorpdfstring{$\Z_n$}{Zn}-Equivariant K-Theory}\label{sec:cyclic}\label{sec:ZnTadmiss}

The simplest groups to consider are the finite cyclic groups, yet even in this case it is non-trivial to show that the T-duality transformation is an isomorphism. In this section, we prove Theorem \ref{thm:ZnTadmiss}, which states that $\Z_n$-equivariant K-theory is T-admissible.
We must start by investigating the possible $\Z_n$-equivariant T-duality triples over a point and the relevant K-theory groups.

Since $H^2_{\Z_n}(*;\Z) \cong \Z_n$, there are $n$ isomorphism classes of $\Z_n$-equivariant principal $S^1$-bundle over a point.
For each $k \in \Z_n$, let $E_k \to *$ denote the corresponding bundle.
Explicitly, $E_k$ is isomorphic to $S^1$ with the action $\xi \cdot z = \xi^kz$, where $\xi \in \Z_n \subseteq S^1$ is the generator.
The Gysin sequence for $E_k$ gives
\[
    \dotsm
    \to H^3(B\Z_n;\Z)
    \to H^3_{\Z_n}(E_k;\Z)
    \to H^2(B\Z_n;\Z)
    \xrightarrow{\cdot k} H^4(B\Z_n;\Z) 
    \to \dotsm,
\]
which, because the odd cohomology groups of $\Z_n$ are trivial, allows us to conclude that 
\[
    H^3_{\Z_n}(E_k;\Z)
    \cong \{ l \in \Z_n \mid kl = 0 \} \subseteq \Z_n.
\]
Thus, every $\Z_n$-equivariant T-duality pair over a point is of the form $(E_k, l)$ with $kl \equiv 0 \mod n$.
The pair $(E_k, l)$ is T-dual to $(E_l, k)$, so our task is to show that the T-duality transformation gives an isomorphism $K^*_{\Z_n}(E_k,l) \cong K^{*-1}_{\Z_n}(E_l, k)$.
First, let us calculate these K-theory groups.

\begin{lemma}\label{lem:EkKtheory}
Let $E_k = S^1$ with the $\Z_n$-action defined by $\Z_n \xrightarrow{ \times k } \Z_n \subseteq S^1$.
Let $\tau_\ell$ be the $\Z_n$-equivariant twist on $E_k$ classified by $\ell \in \ker(\Z_n \xrightarrow{\times k} \Z_n) \cong H^3_{\Z_n}(E_k; \Z)$. Then,
\[
    K^0_{\Z_n}\bigl(E_k, \tau_\ell \bigr) 
        \cong R\bigl(\Z_{\gcd(n,k)}\bigr)^{\xi^{\frac{\gcd(n,k) \ell}{n}}} \text{and}
\]
\[ 
    K^1_{\Z_n}\bigl(E_k, \tau_\ell\bigl) 
        \cong R\bigl(\Z_{\gcd(n,k)}\bigr) / \langle 1 -\xi^{\frac{\gcd(n,k) \ell}{n}} \rangle,
\]
where $\xi$ is the representation generating $R\bigl(\Z_{\gcd(n,k)}\bigr)$.
\end{lemma}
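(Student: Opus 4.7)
The plan is to compute $K^*_{\Z_n}(E_k,\tau_\ell)$ via Mayer--Vietoris applied to a $\Z_n$-invariant open cover of $E_k$. Cover $E_k = S^1$ by two $\Z_n$-invariant open subsets $U$ and $V$, each obtained as the complement of a single $\Z_n$-orbit; each retracts equivariantly onto a single orbit $\Z_n/\Z_d$ (where $d=\gcd(n,k)$ is the order of the stabilizer of every point), while their intersection $U\cap V$ retracts onto a disjoint union $W_1\sqcup W_2$ of two such orbits. By the induction lemma (Lemma \ref{lemma:induction}) applied to each orbit,
\[
K^0_{\Z_n}(\Z_n/\Z_d)=K^0_{\Z_d}(\mathrm{pt})=R(\Z_d), \qquad K^1_{\Z_n}(\Z_n/\Z_d)=0.
\]
Moreover $H^3_{\Z_n}(\Z_n/\Z_d)=H^3_{\Z_d}(\mathrm{pt})=0$, so the restriction of $\tau_\ell$ to any of $U$, $V$, $W_1$, $W_2$ is (non-canonically) isomorphic to the trivial twist. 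The Mayer--Vietoris sequence therefore collapses to the short exact sequence
\[
0\to K^0_{\Z_n}(E_k,\tau_\ell)\to R(\Z_d)^2\xrightarrow{\phi} R(\Z_d)^2\to K^1_{\Z_n}(E_k,\tau_\ell)\to 0,
\]
with $\phi$ the difference of the two restriction maps.

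The next step is to compute $\phi$. Choose trivializations of $\tau_\ell$ on $U$, $V$, and $W_1$ that are mutually compatible under the inclusions $W_1\hookrightarrow U$ and $W_1\hookrightarrow V$. Then three of the four restriction maps become the identity on $R(\Z_d)$, while the fourth (on $W_2$) is multiplication by the total monodromy of $\tau_\ell$ around the loop $E_k$. Since $\Z_n$-equivariant automorphisms of the trivial twist on an orbit $\Z_n/\Z_d$ are classified by $H^2_{\Z_d}(\mathrm{pt})\cong \widehat{\Z_d}$, this monodromy is a character $\chi^j\in \widehat{\Z_d}$, which under the identification $R(\Z_d)=\Z[\xi]/(\xi^d-1)$ acts by multiplication by $\xi^j$. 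Hence $\phi(f,g)=(f-g,\,f-\xi^j g)$, whose kernel is $\{(f,f):(1-\xi^j)f=0\}\cong R(\Z_d)^{\xi^j}$ and whose cokernel is $R(\Z_d)/\langle 1-\xi^j\rangle$.

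The main technical obstacle is identifying the monodromy exponent $j$ in terms of the twist class $\ell$; the claim is $j=d\ell/n$, which is a well-defined integer because $\ell\in \ker(\cdot k\colon \Z_n\to\Z_n)=m\Z_n$ with $m=n/d$, so $\ell=jm$. This can be verified either by constructing an explicit $PU(\mathcal{H})$-bundle representative for $\tau_\ell$ on $E_k$ via a $\Z_n$-equivariant cocycle on the cover $\{U,V\}$ and directly reading off the transition function on $W_2$, or by combining the classifying-space description $H^3_{\Z_n}(E_k)\cong \ker(\cdot k)$ coming from the Cartan--Leray/Gysin spectral sequence of $E_k\to \mathrm{pt}$ with the identification $H^2_{\Z_d}(\mathrm{pt})\cong \widehat{\Z_d}$ of twist automorphisms on an orbit. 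A useful consistency check is the trivial-action case $k=0$, where $d=n$, the whole problem reduces to a K\"unneth computation of $K^*_{\Z_n}(S^1,\tau_\ell)$, and the formula gives monodromy $\xi^\ell$ as expected. Substituting $j=d\ell/n$ into the kernel and cokernel from the previous paragraph produces the stated formulas.
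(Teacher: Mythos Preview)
Your proposal is correct and follows essentially the same Mayer--Vietoris argument as the paper: the same invariant cover by two copies of $\Z_n/\Z_d$ with intersection two copies, the same reduction to a $2\times 2$ matrix over $R(\Z_d)$ with entries $1$ and $\xi^{d\ell/n}$, and the same identification of kernel and cokernel. The paper likewise does not spell out the monodromy-versus-$\ell$ identification in detail, simply asserting it with a reference to the technique in \cite{FHTtwistedktheory}*{\textsection 1}, so your treatment of that step is at the same level of rigor.
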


We remark on why $\gcd(n,k)\ell/n$ is an integer. Indeed, the kernel of $\Z_n \xrightarrow{\times k} \Z_n$ is the subgroup generated by $n / \gcd(n,k)$ and so since $\ell$ is an element of this subgroup, it must be a multiple of $n / \gcd(n,k)$.
This implies that $\gcd(n,k)\ell/n$ is an integer.

\begin{proof}
The groups are computed using a Mayer-Vietoris argument; the same technique is used in \cite{FHTtwistedktheory}*{\textsection 1}.
For convenience, write $d = \gcd(n,k)$.
$E_k$ has a $\Z_n$-CW-structure consisting of a 0-cell $e^0 \times \Z_n / \Z_d$ and a 1-cell $e^1 \times \Z_n / \Z_d$.
Now and throughout the proof, we implicitly identify $\Z_d$ with the subgroup of $\Z_n$ generated by $n/d$.
\begin{center}
\begin{figure}
\begin{tikzpicture}
    
    \def \ptsize {0.1};
    \def \rad {2} 
    \def \blen {32}; 
    \def \slen {23}; 

    \draw (0,0) circle [radius=2];

    \foreach \n in {0,...,3}
    {
        \draw[ultra thick, myblue]
            (70+90*\n-\slen:\rad+0.1) arc 
            (70+90*\n-\slen:70+90*\n+\slen:\rad+0.1);

        \draw[ultra thick, mygreen] 
            (115+90*\n-\blen:\rad-0.1) arc 
            (115+90*\n-\blen:115+90*\n+\blen:\rad-0.1);

        \draw[ultra thick, mypink]
            (70+90*\n-\slen:\rad) arc
            (70+90*\n-\slen:70+90*\n-\slen+10:\rad);

        \draw[ultra thick, mypink]
            (70+90*\n+\slen:\rad) arc
            (70+90*\n+\slen:70+90*\n+\slen-10:\rad);

        \draw [fill] (70+90*\n:\rad) circle [radius=\ptsize];

        \node at (70+90*\n+\slen-5:2.5) {$\xi^{\frac{d\ell}{n}}$};
        \node at (70+90*\n-\slen+5:2.4) {$1$};
    };

    \node[myblue, right] at (3,1) {\large $\mathbf{U \simeq \Z_n/\Z_d}$};
    \node[mygreen, right] at (3,0) {\large $\mathbf{V \simeq \Z_n/\Z_d}$};
    \node[mypink, right] at (3,-1) {\large $\mathbf{U\cap V \simeq \Z_n/\Z_d \cup \Z_n/\Z_d}$};
\end{tikzpicture}
    \caption{\label{fig:coverEk}The open cover $\{U,V\}$ used to calculate $K^*_{\Z_n}(E_k, \tau_\ell)$.}
\end{figure}
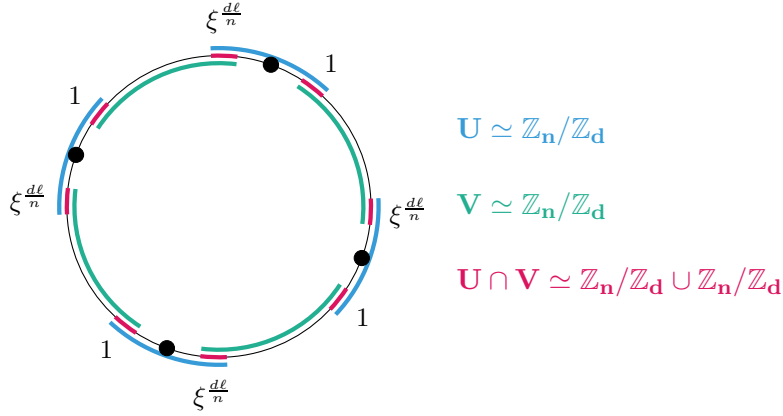
\end{center}

Let $U$ be a small open neighbourhood around the 0-cell and $V$ an open set containing $E_k \setminus U$ disjoint from the 0-cell; see Figure \ref{fig:coverEk}.
Then $U \simeq \Z_n/\Z_d \simeq V$ and $U\cap V$ is a disjoint union of two copies of $\Z_n / \Z_d$.
A twist on $E_k$ can be modelled as a $\Z_n$-equivariant line bundle on the intersection; this is equivalent to two choices of $\Z_d$-representation.
Up to stable isomorphism (of Hitchin gerbes), we can choose one of these representations to be trivial.
To represent $\tau_\ell$, the remaining $\Z_d$-representation is chosen to be $\xi^{d\ell/n}$.
The twist is depicted in Figure \ref{fig:coverEk}. 

Noting that $K^0_{\Z_n}(\Z_n/\Z_d) = R(\Z_d)$ and $K^1_{\Z_n}(\Z_n/\Z_d)= 0$, the Mayer-Vietoris sequence for $E_k= U \cup V$ is
\[
0 
\rightarrow
K^0_{\Z_n}(E_k, \tau_\ell)
\rightarrow
R(\Z_d)^2 
\xrightarrow{\,\, \left(\begin{smallmatrix} 1 & -\xi^{d\ell/n} \\ 1 & -1 \end{smallmatrix}\right) \,\, }
R(\Z_d)^2 
\rightarrow
K^1_{\Z_n}(E_, \tau_\ell)
\rightarrow
0.
\]
The map in the center is a result of choosing trivialisations of $\tau_\ell$ over $U$ and $V$; more details are found in \cite{FHTtwistedktheory}*{\textsection 1}.
From the sequence, we conclude that
\[
K^0_{\Z_n}(E_k, \tau_\ell) 
\cong
\ker\left(\begin{smallmatrix} 1 & -\xi^{d\ell/n} \\ 1 & -1 \end{smallmatrix}\right)
\cong
R(\Z_d)^{\xi^{d\ell/n}}
\text{ and}
\]
\[
K^1_{\Z_n}(E_k, \tau_\ell)
\cong
\coker\left(\begin{smallmatrix} 1 & -\xi^{d\ell/n} \\ 1 & -1 \end{smallmatrix}\right)
\cong
R(\Z_d) / \langle 1-\xi^{d\ell/n}\rangle.
\]
The final isomorphism is given by $[(p,q)] \mapsto [p-q]$.
\end{proof}

It will also be useful to know what the restriction maps are:

\begin{lemma}\label{lem:Ekrest}
Consider again the assumptions made in the previous lemma and let $m$ be an integer dividing $n$.
Restricting the K-theory groups of $E_k$ along the inclusion $\Z_m \hookrightarrow \Z_n$ induce the following diagrams:
\begin{equation}\label{diag:restK0}
\begin{tikzcd}
    K^0_{\Z_n}(E_k, \tau_\ell) \arrow[r, "\cong"] \arrow[d] 
        &  R\bigl(\Z_{\gcd(n,k)}\bigr)^{\xi^{\frac{\gcd(n,k) \ell}{n}}} \arrow[d, shift right = 5] \\
    K^0_{\Z_m}(E_k, \tau_\ell) \arrow[r, "\cong"] 
        & R\bigl(\Z_{\gcd(m,k)}\bigr)^{\eta^{\frac{\gcd(m,k) \ell}{m}}}
\end{tikzcd}
\end{equation}
\begin{equation}\label{diag:restK1}
\begin{tikzcd}
    K^1_{\Z_n}(E_k, \tau_\ell) \arrow[r, "\cong"] \arrow[d] 
        & R\bigl(\Z_{\gcd(n,k)}\bigr) / \langle 1 -\xi^{\frac{\gcd(n,k) \ell}{n}} \rangle \arrow[d] \\
    K^1_{\Z_m}(E_k, \tau_\ell) \arrow[r, "\cong"] 
        & R\bigl(\Z_{\gcd(m,k)}\bigr) / \langle 1 -\eta^{\frac{\gcd(m,k) \ell}{m}} \rangle 
\end{tikzcd}
\end{equation}
The map on the right-hand side of \eqref{diag:restK0} is a restriction of $R(\Z_{\gcd(n,k)}) \to R(\Z_{\gcd(m,k)})$, which is in turn induced by the inclusion $\Z_{\gcd(m,k)} \hookrightarrow \Z_{\gcd(n,k)}$.
The map sends $\xi$ to $\eta = \xi^{\frac{\gcd(n,k)}{\gcd(m,k)}}$.
The vertical map on the right-hand side of \eqref{diag:restK1} is given by 
\[
    [p(\xi)] \longmapsto 
    \bigl[ (1 + a + a^2 + \dotsm + a^{\frac{n \gcd(m,k)}{m \gcd(n,k)} - 1} ) p(\eta) \bigr],
\]
where $a = \eta^{\frac{\gcd(n,k)\ell}{n}}$.
Here, we write the elements of the representation ring as polynomials, with $p(x)$ denoting a polynomial in $x$.
\end{lemma}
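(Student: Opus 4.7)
The approach will be to recompute both $K^*_{\Z_n}(E_k, \tau_\ell)$ and $K^*_{\Z_m}(E_k, \tau_\ell)$ using the same Mayer--Vietoris cover $\{U, V\}$ of $E_k$ that proved Lemma \ref{lem:EkKtheory}, and to exploit naturality of this sequence under the forgetful restriction from $\Z_n$-equivariant to $\Z_m$-equivariant K-theory. The geometric input is that each basic orbit $\Z_n/\Z_d$ appearing in the cover (with $d = \gcd(n,k)$) decomposes, as a $\Z_m$-set, as a disjoint union of $r := n\gcd(m,k)/(m\gcd(n,k))$ copies of $\Z_m/\Z_{d'}$ (with $d' = \gcd(m,k)$); this is because $\Z_m$ and $\Z_d$ are subgroups of the abelian group $\Z_n$ whose intersection is $\Z_{d'}$. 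Correspondingly, the restriction on equivariant K-theory of each basic orbit is the diagonal $V \mapsto (V|_{\Z_{d'}}, \dots, V|_{\Z_{d'}})$ from $R(\Z_d)$ into $R(\Z_{d'})^r$, where the component-wise character restriction sends the generator $\xi$ to the generator $\eta$, and the twist character $\xi^{d\ell/n}$ on the twisted $\Z_n$-orbit of $U \cap V$ restricts to $a := \eta^{d\ell/n}$ on each of the new $\Z_m$-orbits.

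For $K^0$, I would trace a kernel element $a \in R(\Z_d)^{\xi^{d\ell/n}}$ through the Mayer--Vietoris: it is represented by $(a, a) \in R(\Z_d)^2 = K^0_{\Z_n}(U) \oplus K^0_{\Z_n}(V)$, whose $\Z_m$-restriction is the fully diagonal vector $(a|_{\Z_{d'}}, \dots, a|_{\Z_{d'}}) \in R(\Z_{d'})^{2r}$. A direct check then shows that this vector lies in the kernel of the $\Z_m$-MV differential, and identifying the resulting element with a class in $R(\Z_{d'})^{\eta^{d'\ell/m}}$ via Lemma \ref{lem:EkKtheory} applied to $\Z_m$ gives the stated formula: the $K^0$ restriction is induced by the representation-ring restriction $R(\Z_d) \to R(\Z_{d'})$, $\xi \mapsto \eta$.

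For $K^1$, a cokernel class $[p(\xi)] \in R(\Z_d)/(1 - \xi^{d\ell/n})$ is represented by $(p(\xi), 0) \in R(\Z_d)^2 = K^0_{\Z_n}(U \cap V)$, whose restriction to $R(\Z_{d'})^{2r} = K^0_{\Z_m}(U \cap V)$ places $p(\eta)$ in the $r$ entries indexed by the trivially-twisted $\Z_m$-orbits of $U \cap V$ and zero in the twisted entries. Reducing this representative modulo the image of the $\Z_m$-MV differential should accumulate a geometric transfer factor: traveling from one basis vector to the next via the MV relations picks up a factor of $a = \eta^{d\ell/n}$ (carried by the restricted twist character), so collapsing all $r$ copies onto a single canonical generator produces $(1 + a + a^2 + \cdots + a^{r-1}) p(\eta)$ inside $R(\Z_{d'})/(1 - \eta^{d'\ell/m})$. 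The main obstacle is precisely this combinatorial unwinding---fixing a cyclic labelling of the $r$ $\Z_m$-orbits inside each $\Z_n$-orbit, verifying at each step that an MV relation produces exactly one factor of $a$, and summing the resulting geometric series. A convenient consistency check is that $(1 + a + \cdots + a^{r-1})(1 - a) = 1 - a^r = 1 - \eta^{d'\ell/m}$, which ensures the formula descends to the quotient, and once it is verified on a single generator, additivity in $p$ gives the full claim.
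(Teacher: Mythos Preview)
Your proposal is correct and follows essentially the same approach as the paper: both use naturality of the Mayer--Vietoris sequence under restriction, identify the $\Z_m$-decomposition of $\Z_n/\Z_d$ into $r = nd'/(md)$ copies of $\Z_m/\Z_{d'}$ so that restriction becomes the diagonal $R(\Z_d)\to R(\Z_{d'})^r$, and then trace kernel and cokernel representatives through the two MV sequences. The paper does exactly the combinatorial unwinding you flag as the main obstacle, by writing the $\Z_m$-level differential $\Phi$ and the cokernel identification explicitly and composing; your geometric-series heuristic and the consistency check $(1+a+\cdots+a^{r-1})(1-a)=1-\eta^{d'\ell/m}$ match their computation.
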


\begin{proof}

We continue using the notation established in the previous lemma's proof.
In addition to writing $d = \gcd(n,k)$, we write $d' = \gcd(m,k)$. Let $\xi$ be the generator of $R(\Z_d)$ and $\eta = \xi^{d/d'}$ the generator of $R(\Z_{d'})$.
Elements of these rings will be written as polynomials $p(\xi)$ or $p(\eta)$.

Start by observing that $K_{\Z_m}(\Z_n / \Z_d) \cong R(\Z_{d'})^{\frac{nd'}{md}}$.
This is a result of the orbit-stabiliser theorem: we have $\Z_m$ acting on $n/d$ points with stabiliser $\Z_m \cap \Z_d = \Z_{d'}$ (identifying these groups with subgroups of $\Z_n$).
Consider the Mayer-Vietoris sequence for the $\Z_m$-equivariant K-theory alongside the sequence considered in the previous proof:
\[
\begin{tikzcd}[column sep = 15]
    0 \arrow[r] 
        & K^0_{\Z_n}(E_k, \tau_\ell)
        \arrow[r] \arrow[d]
        & R(\Z_d)^2 
        \arrow[r, "\sbm{1 \amp -\xi^{d\ell/n} \\ 1 \amp -1}"] \arrow[d]
        & R(\Z_d)^2 
        \arrow[r] \arrow[d]
        & K^1_{\Z_n}(E_k, \tau_\ell) 
        \arrow[r] \arrow[d]
        & 0 
    \\
    0 \arrow[r]
        & K^0_{\Z_m}(E_k, \tau_\ell)
        \arrow[r]
        & \bigl[R(\Z_{d'})^{\frac{nd'}{md}}\bigr]^2
        \arrow[r, "\Phi"]
        & \bigl[R(\Z_{d'})^{\frac{nd'}{md}}\bigr]^2
        \arrow[r]
        & K^1_{\Z_m}(E_k, \tau_\ell) 
        \arrow[r]
        & 0 
\end{tikzcd}
\]
The two vertical arrows in the center are given by 
\[
\bigl( p(\xi), q(\xi) \bigr) \mapsto \bigl( p(\eta), \dotsc, p(\eta), q(\eta), \dotsc, q(\eta) \bigr).
\]
The map $\Phi$ can be defined as
\begin{align*}
   &\Phi \bigl(p_1, \dots, p_j, q_1, \dots, q_j \bigr)  \\
        \hspace{5mm}&= \bigl( p_1 - a q_1, p_2 - a q_2, \dotsc, p_j - a q_j, p_1 - q_2, p_2-q_3, \dotsc, p_{j-1} - q_j, p_j - q_1 \bigr),
\end{align*}
where, for brevity, $j = \frac{nd'}{md}$ and $a = \eta^{d\ell/n}$.
Applying  Lemma \ref{lem:EkKtheory}, we get
\[
K^0_{\Z_m}(E_k, \tau_\ell) = R(\Z_{d'})^{\eta^{d'\ell/m}} 
\quad \text{and} \quad
K^1_{\Z_m}(E_k, \tau_\ell) = \frac{R(\Z_{d'})}{\langle 1 -\eta^{d'\ell/m} \rangle}.
\]
We can identify these with $\ker \Phi$ and $\coker\Phi$ as follows:
\[
    \ker \Phi 
        \xrightarrow{\,\, \cong \,\,} 
        R(\Z_{d'})^{\eta^{d'\ell/m}}, 
    \quad
    (p_1, \dotsc, p_j, q_1, \dotsc, q_j) 
        \longmapsto p_1,
\]
\begin{align*}
    & \hspace{20mm}
    \coker \Phi 
    \xrightarrow{\,\, \cong \,\,} 
    \frac{R(\Z_{d'})}{\langle 1 -\eta^{d'\ell/m} \rangle}\tag{\theequation}\label{cokermap} \\
    & 
    [(x_1, \dotsc, x_j, y_1, \dotsc, y_j)] \\
    & 
    \hspace{10mm}\longmapsto [(x_1 + a x_j + a^2x_{j-1} \dotsm + a^{j-1}x_2) - (a y_j + a^2y_{j-1} + \dotsm + a^j y_1)].
\end{align*}
To make sense of this: first note that if $\Phi(p_1, \dotsc, p_j, q_1, \dotsc, q_j) = 0$, then
\[
    p_1 = aq_1 = ap_j = a^2q_j = \dotsm = a^j p_1 
    \text{ and}
\]
\[
    a^j = \bigl(\eta^{d\ell/n}\bigr)^{\frac{nd'}{md}} = \eta^{d'\ell/m},
\]
so $p_1 \in R(\Z_{d'})^{\eta^{d'\ell/m}}$.
Furthermore, \eqref{cokermap} is well defined, since
\begin{align*}
   &\Phi(p_1, \dots, p_j, q_1, \dots, q_j)  \\
        &\hspace{5mm}
        = (p_1 - a q_1, p_2 - a q_2, \dotsc, p_j - a q_j, p_1 - q_2, p_2-q_3, \dotsc, p_{j-1} - q_j, p_j - q_1) \\
        &\hspace{20mm}
        \longmapsto (p_1 - a q_1) + a(p_j - a q_j) + \dotsm a^{j-1}(p_2-a q_2) \\
        &\hspace{40mm} 
        -[ a(p_j-q_1) + a^2(p_{j-1}-q_j) + \dotsm + a^j(p_1-q_2) ] \\
        &\hspace{25mm} 
        = (1-a^j) p_1  \\
        &\hspace{25mm} 
        =  (1 - \eta^{d'\ell/m}) p_1.
\end{align*}
It is straightforward to confirm that the inverse map is $[p] \mapsto [(p,0,\dotsc, 0)]$. 

Now, to determine the map on $K^0$-groups, consider the following:
\[
\begin{tikzcd}
    K^0_{\Z_n}(E_k, \tau_\ell) \arrow[r, "\cong"] \arrow[d] 
        & \ker\left(\begin{smallmatrix} 1 & -\xi^{d\ell/n} \\ 1 & -1 \end{smallmatrix}\right) \arrow[r, "\cong"] \arrow[d]
        &  R\bigl(\Z_d\bigr)^{\xi^{\frac{d \ell}{n}}} \arrow[d, shift right = 5] \\
    K^0_{\Z_m, \tau_\ell}(E_k) \arrow[r, "\cong"] 
        & \ker \Phi \arrow[r, "\cong"]
        & R\bigl(\Z_{d'}\bigr)^{\eta^{d'\ell/m}}
\end{tikzcd}
\]
On the level of elements, the right-hand square is
\[
\begin{tikzcd}
    \bigl(p(\xi), p(\xi)\bigr) \arrow[r] \arrow[d, mapsto] 
        & p(\xi) \arrow[l] \arrow[d, mapsto] \\
    \bigl(p(\eta), \dotsc, p(\eta)\bigl) \arrow[r] 
        & p(\eta). \arrow[l]
\end{tikzcd}
\]
This proves the first part of the lemma.
For the second part, the relevant diagram is:
\[
\begin{tikzcd}
    K^1_{\Z_n}(E_k, \tau_\ell) \arrow[r, "\cong"] \arrow[d] 
        & \coker\left(\begin{smallmatrix} 1 & -\xi^{d\ell/n} \\ 1 & -1 \end{smallmatrix}\right) \arrow[r, "\cong"] \arrow[d]
        & R\bigl(\Z_d\bigr) / \langle 1 -\xi^{d \ell/n} \rangle \arrow[d] \\
    K^1_{\Z_m}(E_k, \tau_\ell) \arrow[r, "\cong"] 
        & \coker \Phi \arrow[r, "\cong"]
        & R\bigl(\Z_{d'}\bigr) / \langle 1 -\eta^{ d'\ell/m} \rangle 
\end{tikzcd}
\]
On the level of elements, the right-hand side square is:
\[
\begin{tikzcd}
    \bigl(p(\xi), 0 \bigr) \arrow[r] \arrow[d, mapsto] 
        & p(\xi) \arrow[l] \arrow[d, mapsto] \\
    \bigl(p(\eta), \dotsc, p(\eta), 0, \dotsc, 0 \bigl) \arrow[r] 
        & \bigl[ (1 + a + a^2 + \dotsm + a^{\frac{n d'}{m d} - 1} ) p(\eta)] \arrow[l]
\end{tikzcd}
\]
On the bottom row we have used the correspondence \eqref{cokermap}. This completes the proof.
\end{proof}

To prove that $\Z_n$-equivariant K-theory is T-admissible, we first prove an intermediary theorem that tells us that the T-duality transformation is an isomorphism for pairs with trivial bundle on one side and trivial twist on the other.
The main theorem, Theorem \ref{thm:ZnTadmiss}, is proved by reducing to this case.

\begin{theorem}\label{thm:Ek0Tiso}
The T-duality transformation for the $\Z_n$-equivariant pairs $(E_k,0)$ and $(E_0,k)$ is an isomorphism.
\end{theorem}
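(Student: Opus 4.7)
Both $K$-theory groups appearing in the statement have been identified with $R(\Z_d)$ (where $d=\gcd(n,k)$) in Lemma \ref{lem:EkKtheory}. My plan is to reduce the isomorphism claim to a statement about a single element of $R(\Z_d)$ and then verify this statement by tracking a generator through the three factors of $T$.

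The key structural observation is that $T=\hat p_!\circ u^*\circ p^*$ is $R(\Z_n)$-linear, as each of its three factors is. Both the source and target $K$-groups, regarded as $R(\Z_n)$-modules, are annihilated by $1-\xi^k$ (this is visible from the explicit descriptions in Lemma \ref{lem:EkKtheory}), so $T$ descends to an $R(\Z_d)$-linear map $R(\Z_d)\to R(\Z_d)$. Any such map is multiplication by some $t\in R(\Z_d)$, and $T$ is an isomorphism iff $t$ is a unit. Thus it suffices to compute $T[1]$ and show that the result is a unit in $R(\Z_d)$.

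To compute $T[1]$ I would proceed step-by-step. First, $p^*[1]$ is the trivial class in $K^0_{\Z_n}(E_k\times E_0,0)$. Second, $u^*$ carries this to a twisted class in $K^0_{\Z_n}(E_k\times E_0,\hat p^*k)$ represented by the equivariant Poincar\'e line bundle $\mathcal{P}$ on $E_k\times E_0\cong S^1\times S^1$; this identification is exactly what the Poincar\'e bundle condition on $u$ encodes. Third, $\hat p_!$ integrates $\mathcal{P}$ along the $E_k$-fiber via the embedding-into-line-bundle/Thom description from Section \ref{sec:Ktheory}, yielding a class in $K^1_{\Z_n}(E_0,k)$.

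The main obstacle is showing that the resulting element of $R(\Z_d)$ is a unit. One route is by naturality with respect to the forgetful map to non-equivariant $K$-theory: the forgetful map sends $R(\Z_d)$ to $\Z$ by augmentation, sends units to units, and transforms $T$ into the classical non-equivariant T-duality transformation for $S^1$-bundles, which sends $1$ to a generator of $K^1(S^1)$. To upgrade this to the equivariant statement one checks the analogous property on each character component, which is feasible because the Poincar\'e class $\mathcal{P}$ has a well-behaved decomposition under the residual $\Z_d$-action. A cleaner alternative is to work at the level of $C^*$-algebras via Green--Julg: both crossed products appearing in Definition \ref{def:equiv_twisted_K} are Morita equivalent to $C(S^1)\otimes C^*(\Z_d)$, and $T$ realises itself as an explicit element of $KK$ between them which can be shown to be a $KK$-equivalence by the standard Fourier--Mukai type argument for the Poincar\'e bundle on $S^1\times S^1$.
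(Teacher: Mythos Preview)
Your reduction to an $R(\Z_d)$-linear endomorphism of a free rank-one module is correct and is a sound way to organise the problem: the T-duality transformation is indeed $R(\Z_n)$-module linear, both sides are free $R(\Z_d)$-modules of rank one by Lemma~\ref{lem:EkKtheory}, and so $T$ is multiplication by some $t\in R(\Z_d)$ which must be shown to be a unit.

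The gap is in your proposed verification that $t$ is a unit. The forgetful map to non-equivariant K-theory only tells you that the augmentation $\epsilon(t)\in\Z$ is $\pm 1$, and this is far from sufficient: for instance $2-\xi\in R(\Z_2)$ has augmentation $1$ but is not a unit. Your suggestion to ``check the analogous property on each character component'' does not obviously repair this, because the characters of $\Z_d$ take values in rings of cyclotomic integers rather than in $\Z$, and an element of $R(\Z_d)$ whose image under every character is a unit in $\C^\times$ need not be a unit in $R(\Z_d)$. You would need to show that each character value is a unit in the relevant ring of integers, and it is unclear how the Poincar\'e bundle computation gives you that. The KK-theoretic alternative is not developed enough to evaluate.

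The paper's argument is structurally close to yours but replaces the augmentation step with something that actually works: naturality of $T$ under restriction along the subgroup inclusion $\Z_d\hookrightarrow\Z_n$. After this restriction, $\Z_d$ acts trivially on $E_k$ and the twist $\tau_k$ becomes trivial, so the restricted triple is a \emph{trivial} T-duality triple in the sense of Definition~\ref{def:trivial}, and Lemma~\ref{lem:trivialtrans} shows directly that its $T$ is an isomorphism. One then checks, using the explicit description of the restriction maps in Lemma~\ref{lem:Ekrest}, that these vertical restriction maps are either isomorphisms (for $*=0$) or injections with matching image $(n/d)\cdot R(\Z_d)$ (for $*=1$), and the commutative square finishes the argument. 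This is essentially a concrete implementation of your ``show $t$ is a unit'' strategy, but the restriction to $\Z_d$ is precisely the move that makes the unit condition checkable.
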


\begin{proof}
We prove this by reducing to the T-duality transformation for the trivial T-duality relation between $(E_0, 0)$ and itself.
This is already known to be an isomorphism; see Lemma \ref{lem:trivialtrans}. 

Let $d = \gcd(n,k)$ and consider the following diagram:
\begin{equation}\label{eq:diagE0kEk0}
\begin{tikzcd}
    K^*_{\Z_n}(E_k) \arrow[d] \arrow[r, "T"]
        & K^{*-1}_{\Z_n}(E_0, \tau_k) \arrow[d] \\
    K^*_{\Z_d}(E_k) \arrow[r] 
        & K^{*-1}_{\Z_d}(E_0, \tau_k)
\end{tikzcd}
\end{equation}
The vertical arrows are the restriction along the inclusion $\Z_d \hookrightarrow \Z_n$.
The induced $\Z_d$ action on $E_k$ is trivial, as is the twist $\tau_k$ when viewed as $\Z_d$-equivariant.
Therefore, the lower horizontal map is the T-duality transformation for the trivial T-duality triple, which is an isomorphism.

All the K-theory groups and restriction maps have been calculated in Lemma \ref{lem:EkKtheory} and Lemma \ref{lem:Ekrest}.
We show that, in this case, the restriction maps are isomorphisms.
Start by considering \eqref{eq:diagE0kEk0} with $*=0$.
We have the following identifications:
\[
\begin{tikzcd}
    K^0_{\Z_n}(E_k) \arrow[r, "\cong"] \arrow[d] 
        & R(\Z_d) \arrow[d, "\id"] \\
    K^0_{\Z_d}(E_k) \arrow[r, "\cong"] 
        & R(\Z_d)
\end{tikzcd}
\quad
\begin{tikzcd}
    K^1_{\Z_n}(E_0, k) \arrow[r, "\cong"] \arrow[d]
        & \frac{R(\Z_n)}{\langle 1 - \xi^k \rangle} \arrow[d] \\
    K^1_{\Z_d}(E_0, k) \arrow[r, "\cong"] 
        & R(\Z_d).
\end{tikzcd}
\]
The right-most vertical map is induced by $R(\Z_n) \to R(\Z_d)$, which is
surjective with kernel the ideal $\langle 1 - \xi^k\rangle$.
Hence, all of the vertical maps are isomorphisms and diagram \eqref{eq:diagE0kEk0} implies that the T-duality transformation is an isomorphism for this case.

Now consider the other case.
The identifications are as follows:
\[
\begin{tikzcd}
    K^1_{\Z_n}(E_k) \arrow[r, "\cong"] \arrow[d] 
        & R(\Z_d) \arrow[d] \\
    K^1_{\Z_d}(E_k) \arrow[r, "\cong"] 
        & R(\Z_d)
\end{tikzcd}
\quad
\begin{tikzcd}
    K^0_{\Z_n}(E_0, k) \arrow[r, "\cong"] \arrow[d]
        & R(\Z_n)^{\xi^k} \arrow[d] \\
    K^0_{\Z_d}(E_0, k) \arrow[r, "\cong"] 
        & R(\Z_d).
\end{tikzcd}
\]
In this situation, the vertical maps are seen to be injective maps onto $(n/d)R(\Z_d)$. 
The T-duality isomorphism $K^1_{\Z_d}(E_k) \cong K^0_{\Z_d}(E_0, k)$ restricts to an isomorphism of the subgroups $(n/d)K^1_{\Z_d}(E_k) \cong (n/d)K^1_{\Z_d}(E_0, k)$.
Therefore, \eqref{eq:diagE0kEk0} implies that the T-duality transformation is an isomorphism in this case as well.
\end{proof}

We are ready to prove the main theorem of this section.

\begin{theorem}\label{thm:ZnTadmiss}
$\Z_n$-equivariant K-theory is T-admissible.
\end{theorem}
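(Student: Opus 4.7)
The plan is to verify T-admissibility directly: for each divisor $m$ of $n$ and each $\Z_m$-equivariant T-dual pair over a one-point space, the T-duality transformation must be shown to be an isomorphism. By the analysis at the start of this section, every such pair is of the form $(E_k,\tau_\ell)\leftrightarrow (E_\ell,\tau_k)$ with $k\ell\equiv 0\pmod{m}$. The case $\ell=0$ is exactly Theorem~\ref{thm:Ek0Tiso}, and the case $k=0$ follows from Theorem~\ref{thm:Ek0Tiso} together with Corollary~\ref{cor:inverse_T_is_T}. The real content is to handle pairs with both $k$ and $\ell$ nonzero, and the strategy is to reduce these to the already-solved case by restricting to a smaller cyclic subgroup.

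Concretely, I would set $d=\gcd(m,k)$ and restrict along $\Z_d\hookrightarrow\Z_m$. Since $d\mid k$, the $\Z_d$-action on $E_k$ is trivial, so $E_k|_{\Z_d}$ is the trivial bundle $E_0$. By naturality of the Gysin-sequence identification of $H^3_{\Z_\bullet}(E_\bullet;\Z)$ used in Lemma~\ref{lem:EkKtheory}, the twist $\tau_k$ on $E_\ell$ restricts to $0$ (because $k\bmod d=0$), while $\tau_\ell$ on $E_k$ restricts to the class $\ell\bmod d\in H^3_{\Z_d}(E_0;\Z)\cong\Z_d$. The restricted $\Z_d$-pair is therefore of the form $(E_0,\tau_{\ell\bmod d})\leftrightarrow(E_{\ell\bmod d},0)$, whose T-duality transformation is an isomorphism by Theorem~\ref{thm:Ek0Tiso} combined with Corollary~\ref{cor:inverse_T_is_T}.

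With this reduction in hand, the conclusion comes from the commuting naturality square
\[
\begin{tikzcd}
    K^*_{\Z_m}(E_k,\tau_\ell) \arrow[d] \arrow[r, "T"]
        & K^{*-1}_{\Z_m}(E_\ell,\tau_k) \arrow[d] \\
    K^*_{\Z_d}(E_0,\tau_{\ell\bmod d}) \arrow[r, "T"]
        & K^{*-1}_{\Z_d}(E_{\ell\bmod d},0),
\end{tikzcd}
\]
whose bottom arrow is an isomorphism by the preceding paragraph. Using Lemma~\ref{lem:EkKtheory} to identify all four K-groups with submodules or quotients of representation rings, and Lemma~\ref{lem:Ekrest} to describe the vertical restriction maps, one checks in both parities $*=0$ and $*=1$ that the two vertical arrows are injective with matching images, i.e.\ images that are interchanged by the bottom isomorphism. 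A diagram chase, analogous to the two-case argument in the proof of Theorem~\ref{thm:Ek0Tiso}, then forces the top arrow to be an isomorphism as well.

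The main obstacle is this last matching step. The left vertical arrow lands in a subgroup of the form $R(\Z_d)^{\xi^s}$ (in parity $0$) or a quotient $R(\Z_d)/\langle 1-\xi^s\rangle$ (in parity $1$), and the right vertical arrow lands in analogous groups with a different exponent, the two images being described by superficially different formulas in Lemma~\ref{lem:Ekrest}. Verifying that they correspond under the bottom $T$ requires combining those explicit formulas with the description of $T$ on trivial pairs in Lemma~\ref{lem:trivialtrans} and the representation-theoretic identities already used in the proof of Theorem~\ref{thm:Ek0Tiso}. Once this bookkeeping is carried out uniformly in $*$, T-admissibility of $\Z_n$-equivariant K-theory follows.
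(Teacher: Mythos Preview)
Your overall strategy matches the paper's proof: restrict along $\Z_d\hookrightarrow\Z_m$ with $d=\gcd(m,k)$, use naturality of $T$, and compare the images of the two vertical restriction maps. The paper organizes the ``matching images'' step by passing to $\Z_m$-invariants on the bottom row (the action of a generator of $\Z_m$ being by multiplication by $\xi^{d\ell/m}$ and $\zeta^{d'k/m}$ respectively), which identifies the images cleanly as invariant subgroups and reduces the comparison to a single integer equality $\frac{m}{d\alpha}=\frac{\beta}{\beta'}$ established by a short gcd computation. Your ``bookkeeping'' paragraph is pointing at exactly this computation; be aware that it is the real work and is not entirely routine.

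There is, however, a genuine logical gap: you invoke Corollary~\ref{cor:inverse_T_is_T} to get the direction $K^*(E_0,\tau)\to K^{*-1}(E_\bullet,0)$ of the T-transformation to be an isomorphism. That corollary lies downstream of Theorem~\ref{thm:main}, which in turn relies (via Lemma~\ref{lem:injpullback} and Theorem~\ref{thm:rationalTtrans}) on the very statement you are proving. The paper avoids this circularity because the proof of Theorem~\ref{thm:Ek0Tiso} is symmetric in the two pairs: after restriction to $\Z_d$ the bottom row is the trivial T-duality triple of Definition~\ref{def:trivial}, and Lemma~\ref{lem:trivialtrans} shows that $T$ there is $\id\otimes T_{S^1}$ with $T_{S^1}$ the non-equivariant transformation on $S^1$, already known to be an isomorphism in both directions. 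Replace your appeal to Corollary~\ref{cor:inverse_T_is_T} by this observation (equivalently, note that the argument for Theorem~\ref{thm:Ek0Tiso} runs verbatim with the roles of $(E_k,0)$ and $(E_0,\tau_k)$ swapped), and your outline becomes the paper's proof.
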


\begin{proof}

Consider the $\Z_n$-equivariant T-dual pairs $(E_k, \tau_\ell)$ and $(E_\ell, \tau_k)$ and for convenience let $d = \gcd(n,k)$ and $d' = \gcd(n,\ell)$.
We must show that the corresponding T-duality transformation is an isomorphism.
The idea is to consider the following commutative diagrams:

\begin{equation}\label{diag:Tdualrest}
\begin{tikzcd}
    K^0_{\Z_n}(E_k, \tau_\ell)
        \arrow[r, "T"] \arrow[d, "rest.", swap] 
    & K^1_{\Z_n}(E_l \tau_\ell) 
        \arrow[d, "rest."] \\
    K^0_{\Z_d}(E_k, \tau_\ell)^{\Z_n} 
        \arrow[r, "T"] 
    & K^1_{\Z_d}(E_l, \tau_\ell)^{\Z_n}
\end{tikzcd}
\quad
\begin{tikzcd}
    K^1_{\Z_n}(E_k, \tau_\ell)
        \arrow[r, "T"] \arrow[d, "rest.", swap] 
    & K^0_{\Z_n}(E_l \tau_\ell) 
        \arrow[d, "rest."] \\
    K^1_{\Z_d}(E_k, \tau_\ell)^{\Z_n} 
        \arrow[r, "T"] 
    & K^0_{\Z_d}(E_l, \tau_\ell)^{\Z_n}
\end{tikzcd}
\end{equation}
The horizontal maps are T-duality transformations and the vertical maps are restrictions along the inclusion $\Z_d \hookrightarrow \Z_n$.
We will calculate that on the left-hand side the restrictions are isomorphisms and on the right-hand side they are injective maps onto the subgroups $C \cdot K^1_{\Z_d}(E_k, \tau_\ell)^{\Z_n}$ and $C \cdot K^0_{\Z_d}(E_l, \tau_\ell)^{\Z_n}$, respectively, for a fixed integer $C$.
Once we have done this, the theorem will be proved, since Theorem \ref{thm:Ek0Tiso} implies that the lower T-duality transformations are isomorphisms and, for the right-hand diagram, this remains true then restricting to the aforementioned subgroups.

Now for the computation.
By Lemma \ref{lem:EkKtheory}, the K-theory groups are
\[
K^0_{\Z_d}(E_k, \tau_\ell) \cong R(\Z_d)^{\xi^{\ell}},
\,\,\,
K^1_{\Z_d}(E_k, \tau_\ell) \cong \frac{R(\Z_d)}{\langle 1 - \xi^{\ell} \rangle}, \text{ and}
\]
\[
K^0_{\Z_d}(E_\ell, \tau_k) \cong R(\Z_{\gcd(d,\ell)}) \cong 
K^1_{\Z_d}(E_\ell, \tau_k).
\]
One should remember that the notation $E_k$ and $\tau_k$ refers to the $\Z_n$-action; as $\Z_d$-equivariant objects we would write $E_0$ and $\tau_0$.
Acting via a the generator of $\Z_n$ induces a $\Z_d$-equivariant automorphism of $E_k$ and pulling back along this map gives an automorphism of the K-theory groups.
For this, it must be noted that the pullback of $\tau_\ell$ is canonically isomorphic to $\tau_\ell$.
Using the Mayer-Vietoris sequence  -- the same one used in the proof of Lemma \ref{lem:Ekrest} -- one can calculate that the generator of $\Z_n$ acts via multiplication by $\xi^{d\ell/n} \in R(\Z_d)$  on $K^i_{\Z_d}(E_k, \tau_\ell)$ and via multiplication by $\zeta^{d'k/n} \in R(\Z_{\gcd(d, \ell)})$ on $K^i_{\Z_d}(E_\ell, \tau_k)$, where $\xi$ and $\zeta$ are generators of their respective representation rings.

We use Lemma \ref{lem:Ekrest} to write out each of the restriction maps explicitly.
The first we consider is:
\[
\begin{tikzcd}
    K^0_{\Z_n}(E_k, \tau_\ell)
        \arrow[r, "\cong"] \arrow[d]
    & R(\Z_d)^{\xi^{d\ell/n}}  
        \arrow[d] \\
    K^0_{\Z_d}(E_k, \tau_\ell)
        \arrow[r, "\cong"] 
    & R(\Z_d)^{\xi^{\ell}} 
\end{tikzcd}
\]
This restriction map is just the inclusion.
This becomes an isomorphism if we restrict the co-domain to the $\xi^{d\ell/n}$-invariant subgroup.
We have seen that this is identified with the $\Z_n$-invariant subgroup, so we have the isomorphism we desire.
The next restriction to consider is:
\[
\begin{tikzcd}
    K^1_{\Z_n}(E_\ell, \tau_k)
        \arrow[r, "\cong"] \arrow[d]
    & R(\Z_{d'}) / \bigl\langle 1 - \xi^{d'k/n} \bigr\rangle
        \arrow[d] \\
    K^1_{\Z_d}(E_\ell, \tau_k)
        \arrow[r, "\cong"]
    & R(\Z_{\gcd(d,\ell)})
\end{tikzcd}
\]
This map is
\[
    [p(\xi)] \mapsto  
    \bigl(1 + \eta^{d'k/\ell} + \dotsm + (\eta^{d'k/\ell})^{\frac{n \gcd(d,\ell)}{dd'} - 1} \bigr) p(\eta).
\]
In this case, $\frac{n \gcd(d,\ell)}{dd'}$ is the multiplicative order of $\eta^{d'k/\ell}$.
Thus, this becomes an isomorphism if we restrict to the $\eta^{d'k/\ell}$-invariant subgroup.
One sees this by noting that the map induces a bijection between the elements $1, \xi, \dotsc, \xi^{\gcd(d', d'k/n)-1}$ and the elements
\[
\eta^j \bigl(1 + \eta^{d'k/\ell} + \dotsm + (\eta^{d'k/\ell})^{\frac{n \gcd(d,\ell)}{dd'} - 1} \bigr)
\]
for $j \in \{0, \dotsc, \gcd(d', d'k/\ell) - 1 \}$, both of which form a $\Z$-linear basis for their respective groups.
We have now established that the restriction maps in the left-hand side diagram of \eqref{diag:Tdualrest} are isomorphisms.

The next restriction map is:
\[
\begin{tikzcd}
    K^1_{\Z_n}(E_k, \tau_\ell)
        \arrow[r, "\cong"] \arrow[d]
    & R(\Z_d) / \langle 1 - \xi^{d\ell/n} \rangle
        \arrow[d] \\
    K^1_{\Z_d}(E_k, \tau_\ell)
        \arrow[r, "\cong"] 
    & R(\Z_d) / \langle 1 - \eta^{\ell} \rangle
\end{tikzcd}
\]
The map is
\[
    \bigl[ p(\xi) \bigr] 
    \mapsto
    \Bigl[ \bigl(1 + \xi^{d\ell/n} + \dotsm + (\xi^{d\ell/n})^{\frac{n}{d}-1} \bigr) p(\xi) \Bigr] .
\]
Since the multiplicative order of $\xi^{d\ell/n}$ is $\alpha := \frac{\gcd(d,\ell)}{\gcd(d, \ell, d\ell/n)}$, we have
\[
    1 + \xi^{d\ell/n} + \dotsm + (\xi^{d\ell/n})^{\frac{n}{d}-1}
    = \frac{n}{d\alpha}
        \bigl( 1 + \xi^{d\ell/n} + \dotsm + (\xi^{d\ell/n})^{\alpha-1} \bigr)
\]
We see that the restriction map is an isomorphism onto the subgroup $\frac{n}{d\alpha} \cdot K^1_{\Z_d}(E_k, \tau_\ell)^{\xi^{d\ell/n}}$.

The final restriction map is:
\[
\begin{tikzcd}
    K^0_{\Z_n}(E_\ell, \tau_k)
        \arrow[r, "\cong"] \arrow[d]
    & R(\Z_{d'})^{\xi^{d'k/n}}
        \arrow[d, shift right = 3] \\
    K^0_{\Z_d}(E_\ell, \tau_k)
        \arrow[r, "\cong"] 
    & R(\Z_{\gcd(d,\ell)}) \phantom{XX}
\end{tikzcd}
\]
This map is again simply $p(\xi) \mapsto p(\eta)$.
Note that
\begin{align*}
    1 + \xi^{d'k/n} + \dotsm + (\xi^{d'k/n})^{\beta - 1} 
    &  \longmapsto 
     1 + \eta^{d'k/n} + \dotsm + (\eta^{d'k/n})^{\beta - 1} \\
    & \hspace{10mm}   =\frac{\beta}{\beta'}\bigl(1 + \eta^{d'k/n} + \dotsm + (\eta^{d'k/n})^{\beta' - 1} \bigr),
\end{align*}
where, for notational convenience, we write $\beta = d' / \gcd(d', d'k/n)$ for the multiplicative order of $\xi^{d'k/n}$ and $\beta' = \gcd(d,\ell) / \gcd(d, \ell, d'k/n)$ for the multiplicative order of $\eta^{d'k/n}$.
Thus, we can conclude that the restriction map is an injection onto $\frac{\beta}{\beta'} \cdot K^0_{\Z_d}(E_\ell, \tau_k)^{\eta^{d'k/n}}$.

Now, for our proof to work we need that $\frac{n}{d\alpha}$ and $\frac{\beta}{\beta'}$ are equal.
Fortunately, this is true:
\begin{align*}
    \frac{n \beta'}{d\alpha \beta} 
    &= \frac{n \cdot \gcd(d, \ell) \cdot \gcd(d', d'k/n) \cdot \gcd(d, \ell, d\ell/n)}{d \cdot \gcd(d, \ell, d'k/n) \cdot d' \cdot \gcd(d, \ell)} \hspace{-13mm}
    & \\
    &= \frac{n \cdot \gcd(d', d'k/n) \cdot \gcd(d, \ell, d\ell/n)}{d \cdot \gcd(d, \ell, d'k/n) \cdot d'} 
    && \text{Cancel $\gcd(d,\ell)$.} \\
    &= \frac{ \gcd(nd', d'k) \cdot \gcd(nd, nl, dl)}{d \cdot d' \cdot \gcd(nd, nl, d'k)} 
    && \text{Insert $n$ into the $\gcd$s.} \\
    &= \frac{ d' \cdot \gcd(n, k) \cdot \gcd(nd, nl, dl)}{d \cdot d' \cdot \gcd(nd, nl, d'k)} 
    && \text{Factor out $d$ and $d'$.} \\
    &= \frac{\gcd(nd, nl, dl)}{\gcd(nd, nl, d'k)} 
    && \text{Cancel $d$ and $d'$.} \\
    &= \frac{ \gcd(n^2, nk, nl, nl, kl)}{\gcd(n^2, nl, nk, k \ell)} 
    && \text{Substitute values of $d, d'$.} \\
    &= 1  && 
\end{align*}
Therefore $\frac{n}{d\alpha} = \frac{\beta}{\beta'}$; this is the constant $C$ mentioned at the beginning of the proof.
We have now shown that all the vertical maps in \eqref{diag:Tdualrest} are isomorphisms and so, as discussed at the beginning of the proof, we are done.
\end{proof}

\section{Rational Equivariant K-Theory for Finite Groups}

The cyclic group case implies that the T-duality transformation is rationally an isomorphism for all finite groups.

\begin{theorem}
\label{thm:rationalTtrans}
Let $G$ be a finite group and let $(E,P)$ and  $(\hat E, \hat P)$ be
$G$-equivariant T-dual pairs with fiber $S^1$ over a $G$-CW-complex $X$.
Then the T-duality transformation on rational twisted equivariant K-theory,
\[
    K^*_G(E, P)_\Q 
    \xrightarrow{\,\, \cong \,\,}
    K^{*-1}_G(\hat E, \hat P)_\Q,
\]
is an isomorphism, that is, $G$-equivariant K-theory is rationally T-admissible.
\end{theorem}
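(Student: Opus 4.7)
The plan is to reduce to the cyclic case handled by Theorem \ref{thm:ZnTadmiss} via Artin's induction theorem. By Theorem \ref{thm:Tadmissibleiso} (applied to rational twisted equivariant K-theory, which remains a twisted equivariant cohomology theory of the axiomatic type used there), it suffices to show that $K^*_G(-)_\Q$ is $G$-T-admissible, i.e.\ that the T-duality transformation is a rational isomorphism for every T-dual pair over a point with trivial $H$-action, for each (necessarily finite) subgroup $H \leq G$.

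For fixed $H$ and an $H$-equivariant T-duality triple with circle fiber over a point, I would combine naturality of the T-duality transformation under restriction along a cyclic subgroup $C \leq H$ with the cyclic case of Theorem \ref{thm:ZnTadmiss} to obtain, for each such $C$, a commutative square
\[
\begin{tikzcd}
K^*_H(E, P)_\Q \arrow[r, "T_H"] \arrow[d, "\operatorname{Res}^H_C", swap] & K^{*-1}_H(\hat E, \hat P)_\Q \arrow[d, "\operatorname{Res}^H_C"] \\
K^*_C(E, P)_\Q \arrow[r, "T_C"] \arrow[r, "\cong", swap] & K^{*-1}_C(\hat E, \hat P)_\Q
\end{tikzcd}
\]
in which the bottom arrow is an isomorphism.

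By Artin's induction theorem, the identity of $R(H)_\Q$ admits a decomposition $1 = \sum_i \lambda_i \Ind_{C_i}^H(\alpha_i)$ with $\lambda_i \in \Q$, cyclic subgroups $C_i \leq H$, and classes $\alpha_i \in R(C_i)$. Applied multiplicatively via the projection formula, this expresses every class $z \in K^*_H(Y, Q)_\Q$ on an arbitrary $H$-twisted space as $z = \sum_i \lambda_i \Ind_{C_i}^H(\alpha_i \cdot \operatorname{Res}^H_{C_i} z)$. Injectivity of $T_H$ is then immediate: $T_H(x) = 0$ forces $\operatorname{Res}^H_C(x) = 0$ for every cyclic $C$ by the cyclic isomorphisms $T_C$, and the Artin expansion then forces $x = 0$. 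For surjectivity, given $y \in K^{*-1}_H(\hat E, \hat P)_\Q$, I would invert each $T_{C_i}$ on the class $\alpha_i \cdot \operatorname{Res}^H_{C_i}(y)$ to obtain $\gamma_i \in K^*_{C_i}(E, P)_\Q$, and set $x = \sum_i \lambda_i \Ind_{C_i}^H(\gamma_i)$; commutativity of the T-duality transformation with induction would then yield $T_H(x) = y$.

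The main obstacle will be establishing naturality of the T-duality transformation under induction $\Ind_C^H \colon K^*_C(Y, P)_\Q \to K^*_H(Y, P)_\Q$ in the twisted equivariant setting, i.e.\ the identity $T_H \circ \Ind_C^H = \Ind_C^H \circ T_C$. This compatibility reflects the interaction of pullback, twist automorphism, and fiberwise push-forward with the induction isomorphism for pairs provided by Lemma \ref{lemma:induction}, and was already tacitly invoked in the earlier reduction of T-admissibility over $G/H$ to T-admissibility over a point, so one expects this to be verifiable along the same lines.
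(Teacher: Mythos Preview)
Your approach is correct but takes a genuinely different route from the paper. The paper does not use Artin induction; instead it invokes the fixed-point decomposition theorem of \cite{DSV:decomp}*{Corollary 6.2}, which embeds $K^*_G(E,P)_\Q$ into $\bigl[\bigoplus_{g\in G} K^*_{\langle g\rangle}(E|_{X^g},P|_{E|_{X^g}})_\Q\bigr]^G$ (onto the subspace satisfying a compatibility condition). Naturality of $T$ with respect to pullbacks then matches this decomposition with the corresponding one for $(\hat E,\hat P)$, and Theorem~\ref{thm:ZnTadmiss} supplies the isomorphism summand by summand. The paper's argument thus needs only naturality under restriction and pullback, at the cost of importing an external structural result; your Artin-induction argument is self-contained and more classical, but requires in addition the transfer map $\Ind_C^H$ on twisted equivariant K-groups together with the projection formula and the identity $T_H\circ\Ind_C^H=\Ind_C^H\circ T_C$.

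One small correction: the compatibility you need is \emph{not} the same as what was used in the reduction from $G/H$ to a point. That earlier step used the induction \emph{isomorphism} $K^*_H(E_0,P_0)\cong K^*_G(E_0\times_H G,\Ind_H^G P_0)$ of the twist axioms, which changes both space and group. What you need here is the \emph{transfer} $K^*_C(Y,Q)\to K^*_H(Y,Q)$ for $C\le H$ on the same twisted space, i.e.\ the push-forward along the finite $H$-covering $Y\times_C H\to Y$. Compatibility of this transfer with $p^*$, $u^*$, and $\hat p_!$ is indeed routine (base change for the first and last, and functoriality for the middle), but it is a separate verification from the one already in the paper.
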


This theorem is proved using the decomposition theorem introduced in \cite{DSV:decomp}*{Corollary 6.2}.
The result says that there is a decomposition map
\[
    K_G(E,P)_\Q 
    \to 
    \Bigl[ \bigoplus_{g \in G} K_{\cg g}(E|_{X^g}, P|_{E|_{X^g}})_\Q \Bigr]^G
\]
that is an isomorphism onto the elements of the right-hand side satisfying the following relation:
\begin{gather*}
    \text{If $\cg h \subseteq \cg g$, then the elements in the $h$- and $g$-summand restrict} \tag{*}\label{compatibility} \\
    \text{to the same element in $K_{\cg h}(E|_{X^g}, P|_{E|_{X^g}})$}.
\end{gather*}
Thus, we will decompose the K-theory groups and use the T-duality isomorphism for the cyclic group equivariant T-duality triples that we get from the restrictions.
By Theorem \ref{thm:ZnTadmiss}, these are isomorphisms.

\begin{proof}
Consider the following diagram:
\[
\begin{tikzcd}
    K^*_G(E,P)_\Q \arrow[r] \arrow[d] 
    & K^{*-1}_G(\hat E, \hat P)_\Q \arrow[d] \\
    \Bigl[ \bigoplus_{g \in G} K^*_{\cg g}(E|_{X^g}, P|_{E|_{X^g}})_\Q \Bigr]^G \arrow[r]
    & \Bigl[ \bigoplus_{g \in G} K^{*-1}_{\cg g}(\hat E|_{X^g}, \hat P|_{E|_{X^g}})_\Q \Bigr]^G
\end{tikzcd}
\]
The upper horizontal map is the T-duality transformation.
The vertical maps are the decomposition maps; these are induced by the inclusions $E|_{X^g} \to E$.
The lower horizontal map is induced by the T-duality transformations for the restrictions to $X^g$.
Since the T-duality transformation is natural with respect to pullbacks and morphisms of twists, these restrict to the invariant subspace and the diagram commutes.
For the same reason, the map also restricts to a map between the elements satisfying \eqref{compatibility}.
By Theorem \ref{thm:ZnTadmiss}, the lower map is an isomorphism on this specified subspace, which implies that the upper map is an isomorphism.
\end{proof}

\section{The General Case: Compact Lie Groups}

We are almost ready to prove the main result, which is that the T-duality
transformation for circle bundles and twisted equivariant K-theory is an isomorphism for all compact Lie groups.
Let us give the idea of the proof. 
We will show that the T-duality transformation being an isomorphism is equivalent to a certain pullback map being injective.
There exist results that imply that a map in equivariant K-theory is injective if it is injective when restricted to all finite subgroups.
This result will be combined with the fact that the T-duality transformation is rationally an isomorphism for all finite groups to get to our main result.
As this isomorphism has only been proved rationally, it will be helpful to know that the involved groups are torsion-free:

\begin{lemma}\label{lem:twistedktheorycircle}
Let $G$ be a finite group acting on $S^1$ via a homomorphism $\varphi \colon G \to S^1$ and let $K = \ker(\varphi)$. 
Let $P$ be a $G$-equivariant twist on $S^1$.
Then there exists a $1$-dimensional representation $\xi$ of $K$ such that
\[
    K^0_G(S^1,P) \cong R(K)^\xi
    \quad \text{and} \quad
    K^1_G(S^1,P) \cong R(K) / (1- \xi)R(K).
\]
In particular, $K^*_G(S^1, P)$ is torsion-free.
\end{lemma}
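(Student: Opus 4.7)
The plan is to adapt the Mayer--Vietoris argument from the proof of Lemma \ref{lem:EkKtheory} to this slightly more general setting. Since $G/K$ is a finite subgroup of $S^1$ it is cyclic, and it acts freely on $S^1$ by rotations; consequently every point of $S^1$ has stabilizer exactly $K$. Give $S^1$ the $G$-CW structure with one $0$-cell orbit and one $1$-cell orbit, each equivariantly isomorphic to $G/K$. First I would cover $S^1$ by a small open $G$-neighborhood $U$ of the $0$-cell orbit and its complement $V$; both $U$ and $V$ deformation retract equivariantly onto $G/K$, and $U \cap V$ is a disjoint union of two copies of $G/K$.

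Next, using $K^*_G(G/K) \cong R(K)$ concentrated in even degree, and noting that $P$ becomes trivialisable on each of $U$ and $V$ since both retract onto a single orbit, the twisted Mayer--Vietoris sequence collapses to the four-term exact sequence
\begin{equation*}
  0 \to K^0_G(S^1, P) \to R(K) \oplus R(K) \xrightarrow{\ \Phi\ } R(K) \oplus R(K) \to K^1_G(S^1, P) \to 0.
\end{equation*}
The heart of the proof is identifying $\Phi$. After absorbing one of the two comparison maps on $U \cap V$ into the choice of trivialisation on $V$, the remaining transition is an automorphism of the trivial twist on a single orbit $G/K$. Such automorphisms are classified by $\operatorname{Hom}(K, U(1))$, so they correspond to a $1$-dimensional character $\xi$ of $K$. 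With this identification $\Phi$ has matrix $\sbm{1 \amp -\xi \\ 1 \amp -1}$, and computing its kernel and cokernel exactly as in Lemma \ref{lem:EkKtheory} yields the two claimed identifications $K^0_G(S^1,P) \cong R(K)^\xi$ and $K^1_G(S^1,P) \cong R(K)/(1-\xi)R(K)$.

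For the torsion-freeness claim, the key observation is that tensoring with the $1$-dimensional representation $\xi$ permutes the set of irreducible $K$-representations. Hence multiplication by $\xi$ acts on $R(K)$ as a permutation matrix with respect to the $\Z$-basis of irreducible characters, so the kernel and cokernel of $1-\xi$ are each free abelian with one $\Z$-summand per cycle of the permutation; in particular both $R(K)^\xi$ and $R(K)/(1-\xi)R(K)$ are torsion-free.

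I expect the main obstacle to be the careful verification that, after trivialisation on $U$ and $V$, the remaining twist data on $U \cap V$ really does reduce to a single $1$-dimensional character of $K$ rather than to more complicated projective data. This amounts to identifying the $G$-equivariant automorphisms of the trivial twist on the homogeneous space $G/K$ using the axiomatic model of twists from the appendix; once this is pinned down, the rest is essentially a direct transcription of the cyclic-group computations in Section \ref{sec:cyclic}.
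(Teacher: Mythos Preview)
Your proposal is correct and follows essentially the same argument as the paper: the same $G$-CW structure on $S^1$, the same Mayer--Vietoris computation producing the middle map $\sbm{1 \amp -\xi \\ 1 \amp -1}$, and the same torsion-freeness argument via the permutation action of $\xi$ on irreducibles. The paper phrases the identification of $\xi$ as representing $P$ by a $G$-equivariant $S^1$-bundle on $G/K$ (equivalently a character of $K$), which is exactly the point you flag as the main obstacle; your identification via automorphisms of the trivial twist on $G/K$ is the same statement in slightly different language.
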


\begin{proof}
Give $S^1$ the $G$-CW-structure $S^1 = (e^0 \times G/K) \cup (e^1 \times G/K)$.
The twist $P$ can be represented by a $G$-equivariant $S^1$-bundle on $G/K$,
which is equivalent to a $1$-dimensional (complex) representation of $K$.
Let $\xi$ be this representation.
The Mayer-Vietoris sequence gives
\[
    0 \to K^0_G(X,P) \to R(K)^2 \to R(K)^2 \to K^1_G(X,P) \to 0,
\]
where the middle map is $(x,y) \mapsto (x - \xi y, x-y)$.
This implies the isomorphisms claimed in the lemma.

Being the subgroup of a torsion-free module, it is clear that $R(K)^\xi$ is torsion free.
For the second group, we observe that $\xi$, being $1$-dimensional, acts via permutations on the irreducible representations of $K$, which form a basis of $R(K)$.
By partitioning the irreducible representations into $\xi$-orbits, $R(K)$ is isomorphic to a direct sum of modules of the form $\Z[\xi] / (1-\xi^d)\Z[\xi]$, where $d$ divides the multiplicative order of $\xi$.
The quotient of such a module by the sub-module generated by $(1-\xi)$ is a free abelian group of rank $1$.
This altogether implies that $R(K)/(1-\xi)R(K)$ is torsion-free.
\end{proof}

We also need a finiteness result for our twisted K-theory groups, which is
essentially (and with the same procedure) also proven in \cite{Lahtinen}*{Proof of Proposition 4}.
\begin{proposition}\label{prop:finiteness}
  Let $L$ be a compact Lie group with a closed subgroup $G$. Let $X$ be a finite
  $G$-CW complex with equivariant twist $Q$. Then $K^*_G(X,Q)$ is finitely
  generated as module over $R(L)=K^0_G(pt)$, which acts via reduction $R(L)\to R(G)$.
\end{proposition}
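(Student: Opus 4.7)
The plan is to argue by induction on the number of equivariant cells in the $G$-CW structure of $X$, leaning on three standard ingredients: Segal's theorem that $R(L)$ is Noetherian and that for any closed subgroup $H \leq L$ the ring $R(H)$ is finitely generated as an $R(L)$-module via restriction; the Mayer--Vietoris/cofiber long exact sequence for twisted equivariant K-theory in the space variable (an axiom from Appendix \ref{appendix}); and a base case treating a single orbit $G/H$ with an equivariant twist.

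For the base case $X = G/H$ with $H \subseteq G$ closed, the induction axiom identifies $K^*_G(G/H, Q) \cong K^*_H(\mathrm{pt}, Q_0)$, where $Q_0$ is the restriction of $Q$ to a point. The right-hand side is a twisted representation group $R^{\tau}(H)$. I would establish that $R^{\tau}(H)$ is finitely generated as an $R(H)$-module by representing $\tau$ as a central $U(1)$-extension $\widetilde{H} \to H$ (using that $H^3_H(\mathrm{pt};\mathbb{Z}) = H^3(BH;\mathbb{Z})$ is built from central extensions in the compact Lie case): the group $R^{\tau}(H)$ identifies with the subgroup of $R(\widetilde{H})$ on which the central $U(1)$ acts by the defining character, and this is an $R(H)$-direct summand of finite type inside the graded decomposition of $R(\widetilde{H})$ by central weights. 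Composing with the fact that $R(H)$ is finitely generated over $R(L)$ then gives finite generation of $K^*_H(\mathrm{pt}, Q_0)$ over $R(L)$.

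In the inductive step, write $X_n = X_{n-1} \cup_\varphi (G/H \times D^n)$. The cofiber sequence gives a long exact sequence
\[
\cdots \to K^{*}_G(X_n/X_{n-1}, Q) \to K^*_G(X_n, Q) \to K^*_G(X_{n-1}, Q|_{X_{n-1}}) \to \cdots
\]
of $R(L)$-modules. The inductive hypothesis handles the right-hand term, while $X_n/X_{n-1} \simeq (G/H)_+ \wedge S^n$, so the left-hand term reduces via the suspension isomorphism to a shift of the base case. Since $R(L)$ is Noetherian, finite generation passes through the exact sequence: $K^*_G(X_n, Q)$ is an extension of a submodule of a finitely generated module by a quotient of a finitely generated module, hence itself finitely generated.

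The main obstacle is the base-case finiteness of $R^{\tau}(H)$ over $R(H)$ for a general twist; the central-extension argument covers the cases that arise here, but handling all possible twists on $BH$ uniformly requires either a careful reduction to the central-extension picture or an appeal to twisted K-theory results for compact Lie groups in the literature. Fortunately, as noted in the statement, the same procedure as \cite{Lahtinen}*{Proof of Proposition 4} applies verbatim, so the argument can be imported from there.
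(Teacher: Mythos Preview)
Your proposal is correct and follows essentially the same route as the paper: induction on the number of $G$-cells via Mayer--Vietoris, Segal's Noetherianity of $R(L)$ and finite generation of $R(H)$ over $R(L)$, the induction axiom to reduce the base case $G/H$ to $K^*_H(\mathrm{pt},Q_0)$, and finally the appeal to \cite{Lahtinen}*{Proof of Proposition 4} (together with the identification from \cite{FHTtwistedktheory}) for finite generation of the twisted representation module. The only cosmetic difference is that the paper first observes finite generation over $R(L)$ is equivalent to finite generation over $R(G)$ and then works over $R(G)$, whereas you work directly over $R(L)$; and you rightly flag that the central-extension sketch does not cover all twists on $BH$ for general compact $H$, which is why both you and the paper ultimately defer to Lahtinen.
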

\begin{proof}
  We follow the proof in \cite{Lahtinen}*{Proof of Proposition 4} of the same
  result (but for more special twists).
  
  By  \cite{Segal:EKT}*{Proposition 3.2}, $R(G)$ is finitely
  generated as $R(L)$-module. Hence finite generation as $R(L)$-module is
  equivalent to finite generation as $R(G)$-module.

 By \cite{Segal:EKT}*{Corollary 3.3}, $R(G)$ is
  Noetherian and finitely generated as a ring.
  Using the
  Mayer-Vietoris sequence, we hence obtain the claimed result by induction on the number of
  cells once we prove it for all spaces of the form $X=G/H$ for $H$ a closed
  subgroup of $G$. For those spaces, the induction axiom for twists and for
  twisted K-theory  implies that we can change the group to $H$ and the space
  to $X=*$: $K^*_G(G/H;Q)\cong K^*_H(*;Q|_H)$. By \cite{FHTtwistedktheory}*{Example 1.10},  $K^*_H(*,Q|_H)$ is isomorphic to a twisted
  representation ring $R^Q(H)$. And by \cite{Lahtinen}*{Proof of Proposition 4}, this is indeed finitely generated as $R(H)$ and therefore as
  $R(L)$-module.

  By induction on the number of cells, via the Mayer-Vietoris sequence, the
  general result follows.
\end{proof}

The following is a key part of the main proof:

\begin{lemma}
\label{lem:injpullback}
Let $G$ be a compact Lie group and $\bigl( (E,Q), (\hat E, \hat Q), u)$ a $G$-equivariant T-duality triple over a point.
Let $f \colon E \times \hat E \to *$ be the constant map.
The map
\begin{equation}\label{injectivemap}
    F^*\colon K^*_G(E,Q) \to K^*_G(f^*E, F^*Q)
\end{equation}
is injective, where $F \colon f^*E \to E$ is the canonical map in the pullback square.
\end{lemma}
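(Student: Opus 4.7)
The first step is to unpack the geometry. Since $f$ lands at a point, the pullback simplifies to $f^*E \cong B \times E$ where $B := E \times \hat E$ carries the diagonal $G$-action, and $F$ is the projection onto the $E$-factor. Consequently $F^*Q \cong \pr_E^*Q$ depends only on the $E$-coordinate. A naive attempt to construct a $G$-equivariant section $\sigma\colon E \to B \times E$ of $F$ of the form $e \mapsto (\alpha(e), \beta(e), e)$ requires a $G$-equivariant map $\beta\colon E \to \hat E$, which in general does not exist because the $G$-actions on $E$ and $\hat E$ factor through typically incompatible homomorphisms $G \to S^1$. So this direct splitting route fails.

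My preferred approach is a twisted K\"unneth argument for the projection $F$. Because $F^*Q$ is pulled back from the $E$-factor alone, the associated section $C^*$-algebra factors as
\begin{equation*}
 \Gamma_0\bigl((F^*Q) \times_{PU(\H)} \mathcal{K}\bigr) \,\cong\, C(B) \otimes \Gamma_0\bigl(Q \times_{PU(\H)} \mathcal{K}\bigr),
\end{equation*}
with compatible $G$-actions. Combining Green--Julg with the Rosenberg--Schochet K\"unneth theorem applied to the crossed-product algebras should yield an isomorphism
\begin{equation*}
 K^*_G(B \times E,\, F^*Q) \,\cong\, K^*_G(B) \otimes_{R(G)} K^*_G(E, Q),
\end{equation*}
under which $F^*$ corresponds to $x \mapsto 1_B \otimes x$. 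Injectivity then follows as soon as $K^*_G(B)$ is flat over $R(G)$ and the unit map $R(G) \to K^*_G(B)$, $r \mapsto r \cdot 1_B$, is split.

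Flatness and splitting are obtained by one further K\"unneth step: $B = E \times \hat E$ is a product of two circles, and by Lemma \ref{lem:twistedktheorycircle} the equivariant K-theory of each circle (for the relevant finite subgroups) is torsion-free, in fact free as a module over a suitable representation subring. Iterating K\"unneth gives $K^*_G(E) \otimes_{R(G)} K^*_G(\hat E) \cong K^*_G(B)$ free over $R(G)$, with $1 \otimes 1$ spanning an $R(G)$-direct summand that splits the unit. Combined with the previous step, this gives a split injective $F^*$.

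The main obstacle is justifying the twisted K\"unneth isomorphism in $G$-equivariant K-theory for a general compact Lie group $G$, since Lemma \ref{lem:twistedktheorycircle} is stated only for finite $G$. However, circles carry a natural two-cell equivariant CW structure, so the same Mayer--Vietoris computation generalizes; this yields the torsion-freeness hypothesis needed by Rosenberg--Schochet. Once the K\"unneth decomposition is in hand, injectivity of $F^*$ is a purely algebraic consequence of freeness of $K^*_G(B)$ over $R(G)$.
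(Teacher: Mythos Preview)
Your argument has a genuine gap at the K\"unneth step, and the subsequent freeness claim is false.

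First, the K\"unneth isomorphism you invoke does not follow from Green--Julg plus Rosenberg--Schochet. Green--Julg gives $K^G_*(C(B)\otimes A) \cong K_*\bigl((C(B)\otimes A)\rtimes G\bigr)$, but for a \emph{diagonal} $G$-action the crossed product $(C(B)\otimes A)\rtimes G$ is not a tensor product of $C(B)\rtimes G$ and $A\rtimes G$ in any reasonable sense, so the non-equivariant Rosenberg--Schochet theorem does not apply. An equivariant K\"unneth formula over $R(G)$ of the shape you write down is a Hodgkin-type statement, which requires substantial hypotheses (e.g.\ $G$ connected with torsion-free $\pi_1$) and in general only gives a spectral sequence. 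Torsion-freeness of the groups, which is all your Mayer--Vietoris computation delivers, is not enough.

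Second, even granting some K\"unneth machinery, the freeness claim fails. Take $G=\Z_2$ acting on $E=S^1$ by the half-turn (the paper's $E_1$). The action is free, so $K^0_{\Z_2}(E)\cong K^0(S^1)\cong \Z$, with $R(\Z_2)=\Z[t]/(t^2-1)$ acting through the augmentation $t\mapsto 1$. Thus $K^*_{\Z_2}(E)$ is a non-trivial quotient of $R(\Z_2)$, certainly not free or flat over it, and your splitting argument for the unit $R(G)\to K^*_G(B)$ collapses. The same phenomenon occurs whenever the $G$-action on $E$ or $\hat E$ is non-trivial.

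The paper circumvents both issues. It factors $F^*$ as $K^*_G(E,Q)\xrightarrow{\Pi^*}K^*_G(\pi^*E,\Pi^*Q)\xrightarrow{P^*}K^*_G(f^*E,F^*Q)$. The first map \emph{does} admit a K\"unneth argument precisely because $\pi^*E\to E$ is the \emph{trivial} bundle (pullback of $E$ along itself), so one only needs the product with $S^1$ carrying the trivial $G$-action; this is the benign case where the section algebra genuinely tensors with $C(S^1)$. The second map $P^*$ is the first leg of a T-duality transformation over $E$, hence rationally injective by Theorem~\ref{thm:rationalTtrans}; combined with the torsion-freeness of $K^*_G(E,Q)$ from Lemma~\ref{lem:twistedktheorycircle}, this gives injectivity for finite $G$. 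The passage to arbitrary compact Lie $G$ then goes through McClure's and Uuye's restriction-to-finite-subgroups detection principle, using the finiteness established in Proposition~\ref{prop:finiteness}.
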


\begin{proof}
Let $\pi, \hat \pi, p$ and $\hat p$ be the maps denoted in the following diagram:
\begin{center}
\begin{tikzpicture}
    \node (e) at (0,0) {$E$};
    \node (eh) at (3,0) {$\hat E$};
    \node (*) at (1.5,-0.8) {$*$};
    \node (eeh) at (1.5,0.8) {$E \times_X \hat E$};

    \draw [->] (eeh.south west) -> (e) node [midway, above] {\scriptsize$p$};
    \draw [->] (eeh.south east) -> (eh) node [midway, above] {\scriptsize$\hat p$};
    \draw [->] (e.south east) -> (*) node [midway, below] {\scriptsize $\pi$};
    \draw [->] (eh.south west) -> (*) node [midway, below] {\scriptsize$\hat \pi$};
\end{tikzpicture}
\end{center}
Assume, for now, that $G$ is finite.
Since $f = \pi \circ p$, we can write \eqref{injectivemap} as the composition
\begin{equation}\label{injectivecomposition}
    K_G^*(E,Q)
    \xrightarrow{\,\, \Pi^*\,\,}
    K_G^*(\pi^*E, \Pi^*Q)
    \xrightarrow{\,\, P^*\,\,}
    K_G^*(f^*E, F^*Q),
\end{equation}
where $\Pi\colon \pi^*E \to E$ is defined as part of the pullback square.
The pullback of a principal bundle along its own map is trivial, so $\pi^*E \to E$ is the trivial bundle.
The induced map on K-theory is therefore injective, for instance by looking at the K\"unneth exact sequence for C*-algebras \cite{Blackadar}*{23.3.1}.

For the second map, observe that $P^*$ is the first map in the T-duality transformation for the pullback of $\bigl( (E, Q), (\hat E, \hat Q), u \bigr)$ along $\pi$.
By Theorem \ref{thm:rationalTtrans}, this T-duality transformation is rationally an isomorphism, so $P^*$ is rationally injective.
By Lemma \ref{lem:twistedktheorycircle}, $K_G(E,Q)$ is torsion-free and so the image of $\Pi^*$ is torsion-free.
We can then conclude that, for finite groups, the composition \eqref{injectivecomposition} is injective.

Now we show that the finite group case implies the general case. 
It was proved by McClure that if $x \in K_G(X)$ restricts to zero in $K_H(X)$ for every finite subgroup $H$ of $G$, then $x = 0$, see \cite{McClure:restriction}.
This was generalised to KK-theory by Uuye in \cite{Uuye:restriction} under the following assumptions:
\begin{itemize}
    \item $KK^H_n(A,B)$ is a finitely generated $R(G)$-module for all $n \in \Z$ and all closed subgroups $H \subseteq G$.
    \item $KK^F_n(A,B)$ is a finitely generated group for all finite subgroups $F \subseteq G$.
\end{itemize}
Under these conditions, if an element $x \in KK^G(A,B)$ restricts to zero in $KK^H(A,B)$ for all finite subgroups $H$ of $G$ then $x = 0$.

In our case, the finiteness condition is satisfied as special case of
Proposition \ref{prop:finiteness}. For $F$ finite, the relevant K-groups were calculated in Lemma \ref{lem:twistedktheorycircle}.
They are either a subgroup or a quotient of the representation ring of a finite group.

Now, let $x \in K_G(E,Q)$ be in the kernel of $K_G^*(E,Q) \to K_G^*(f^*E, F^*Q)$ and denote by $x_H \in K_H(E,Q)$ the restriction of $x$ for $H \subseteq G$.
If $H$ is finite, we have already shown that $K_H(E,Q) \to K_H(f^*E, F^*Q)$ is injective and since $x_H$ is in its kernel, we deduce that $x_H = 0$.
Therefore, Uuye's theorem implies that $x = 0$, since $x_H = 0$ for all finite subgroups $H \subseteq G$.
This completes the proof.
\end{proof}

For each T-duality triple $\bigl( (E,P), (\hat E, \hat P), u\bigr)$, we call  $\bigl( (\hat E, \hat P), (E, P), u^{-1} \bigr)$ the dual T-duality triple.
The resulting T-duality transformation 
\[
    K^*_G(\hat E, \hat P) \to K^{*-1}_G(E,P)
\]
will be called the dual T-duality transformation.

We are now prepared for the proof of the main theorem:

\begin{theorem}\label{thm:main}
Twisted $G$-equivariant K-theory is $T$-admissible when $G$ is a compact Lie group.
\end{theorem}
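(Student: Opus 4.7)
The theorem asserts T-admissibility, which by Theorem \ref{thm:Tadmissibleiso} yields the T-duality isomorphism on all finite $G$-CW-complexes. Since closed subgroups of compact Lie groups are themselves compact Lie groups, it suffices to prove the T-duality transformation $T \colon K^*_G(E,Q) \to K^{*-1}_G(\hat E, \hat Q)$ is an isomorphism for every compact Lie group $G$ and every $G$-equivariant T-duality triple $\bigl((E,Q),(\hat E,\hat Q), u\bigr)$ over a point. I would tackle injectivity and surjectivity separately.

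For injectivity, pull the triple back along the constant map $f \colon E \times \hat E \to *$ and use naturality of the T-duality transformation to obtain the commutative square
\[
\begin{tikzcd}
K^*_G(E,Q) \arrow[r,"T"] \arrow[d,"F^*",swap] & K^{*-1}_G(\hat E,\hat Q) \arrow[d,"\hat F^*"] \\
K^*_G(f^*E, F^*Q) \arrow[r,"T^f"] & K^{*-1}_G(f^*\hat E, \hat F^*\hat Q).
\end{tikzcd}
\]
The left vertical $F^*$ is injective by Lemma \ref{lem:injpullback}. The plan is then to show $T^f$ is an isomorphism: the pulled-back triple has trivial underlying $S^1$-bundles, and using the $G$-equivariant diagonal section $(a,b)\mapsto(a,b,a)$ of $f^*E \to E\times\hat E$ one obtains a trivialization on which $G$ acts trivially on the $S^1$-factor (because $G$ acts on the principal bundle $E \to *$ through a character $G \to S^1$). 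After identifying the pulled-back twists and twist automorphism in this trivialization, the pulled-back triple matches the trivial triple of Definition \ref{def:trivial}, so Lemma \ref{lem:trivialtrans} gives that $T^f$ is an isomorphism. Injectivity of $T$ then follows from injectivity of $F^*$ and commutativity of the square.

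For surjectivity, apply the same argument to the dual T-duality triple $\bigl((\hat E,\hat Q),(E,Q),u^{-1}\bigr)$ to conclude that the dual T-duality transformation $T^\vee \colon K^*_G(\hat E,\hat Q) \to K^{*-1}_G(E,Q)$ is also injective. I would then show that the composition $T^\vee \circ T \colon K^*_G(E,Q) \to K^{*-2}_G(E,Q)\cong K^*_G(E,Q)$ (using Bott periodicity) is the identity: by naturality it pulls back under the injective map $F^*$ to the analogous composition $(T^f)^\vee \circ T^f$ for the trivial triple, which is the identity by the explicit description in Lemma \ref{lem:trivialtrans}. Because $F^*$ is injective, this forces $T^\vee \circ T = \id$. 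Together with injectivity of $T^\vee$ this makes both $T$ and $T^\vee$ isomorphisms, completing the proof.

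The main obstacle will be the identification of the pulled-back T-duality triple on $E \times \hat E$ as a trivial triple per Definition \ref{def:trivial}. This requires a careful analysis of how the $G$-equivariant trivializations of $f^*E$ and $f^*\hat E$ interact with the pulled-back twists $F^*Q, \hat F^*\hat Q$ — in particular, verifying that $f^*u$ retains the Poincar\'e bundle condition in the chosen trivialization, possibly after a stable modification by a base twist. Once this is established, the rest of the argument follows the outlined diagram chase.
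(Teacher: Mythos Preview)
Your proposal is correct and follows essentially the same approach as the paper: pull the triple back along $f\colon E\times\hat E\to *$, identify the pulled-back triple as trivial in the sense of Definition~\ref{def:trivial} so that Lemma~\ref{lem:trivialtrans} applies, and combine this with the injectivity of $F^*$ from Lemma~\ref{lem:injpullback} via naturality. The only cosmetic difference is that the paper runs the diagram chase symmetrically (both vertical maps injective, bottom $T$ and $T^\vee$ mutually inverse, hence so are the top ones), whereas you split it into ``$T$ injective'' and ``$T^\vee\circ T=\id$''; the obstacle you flag---checking that the pulled-back twist automorphism has the required product form---is indeed the one nontrivial verification, and the paper addresses it only briefly.
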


\begin{proof}

We use a method introduced by Bei Liu in his G\"ottingen PhD thesis \cite{BeiLiu}. 
Consider a T-duality diagram:
\begin{equation}\label{Tdiagproof}
\begin{tikzcd}[column sep={4em,between origins},row sep=1em]
    & 
    p^*P \arrow[ld] \arrow[rd] \arrow[rr, "u"] & & 
    \hat p^* \hat P \arrow[ld] \arrow[rd] & \\
    P \arrow[rd] & & 
    E \times_X \hat E \arrow[ld, "p", swap] \arrow[rd, "\hat p"] & & 
    \hat P \arrow[ld] \\
    & 
    E \arrow[rd, "\pi", swap] & & 
    \hat E \arrow[ld, "\hat \pi"] & \\
    & & 
    X & &
\end{tikzcd}
\end{equation}
Let $f = \pi \circ p = \hat \pi \circ \hat p$ be the canonical map $E \times_X \hat E \to X$. 
Pulling back along $f$ gives a T-duality diagram over $E \times_X \hat E$. 
This pulled-back diagram is the trivial T-duality diagram, that is, the
$S^1$-bundles are trivial and the twists pull back from the base (and can be
identified canonically there via $u$).
This is because the pull-back of a principal bundle along itself is trivial.
For example, $f^*E = p^*\pi^*E$ and $\pi^*E$ is the trivial principal $S^1$-bundle over $E$.
Here, we mean it is trivial as an equivariant bundle; $G$ acts trivially on the $S^1$-fiber.
We have seen in Lemma \ref{lem:trivialtrans} that the T-duality transformation for trivial T-duality triples is an isomorphism.
Moreover, when $X$ is a point, the inverse of this T-duality transformation is its dual T-duality transformation.

Let $F \colon f^*E \to E$ and $\hat F \colon f^*\hat E \to \hat E$ be the resulting maps between the $S^1$-bundles. 
By the naturality of the T-duality transformation, we have
\begin{equation}\label{eqn:proofdiag}
\begin{tikzcd}
    K_G^*(E,P)
        \rar[shift left, "T"] \dar
    & K_G^{*-1}(\hat E, \hat P)
        \dar \lar[shift left, "T"]
    \\
    K_G^* (f^*E, F^*P) 
        \rar[shift left, "T"]
    & K_G^{*-1}(f^*\hat E, \hat F^* \hat P).
        \lar[shift left, "T"]
\end{tikzcd}
\end{equation}
Let $X$ be a point; this is indeed the only case required for T-admissibility.
Then, we know that the lower T-duality transformations are inverse to each other.
Moreover, Lemma \ref{lem:injpullback} implies that the vertical maps are injective.
We therefore have that the T-duality transformations in the top row of
\eqref{eqn:proofdiag} are isomorphisms which are inverse to each other, as required.
\end{proof}

By considering the T-dual pairs introduced in Example \ref{ex:noflux}, we have the following corollary.

\begin{corollary}\label{cor:noflux}
Let $G$ be a compact Lie group, $X$ a $G$-space and $E \to X$ a $G$-equivariant principal $S^1$-bundle with Chern class $c_1 \in H^2_{G}(X)$. There is an isomorphism
\[
    K^*_G(E) \cong K^{*-1}_G(X \times S^1, P),
\]
where $G$-acts trivially on the $S^1$-factor of $X \times S^1$ and $P$ is a $S^1$-equivariant twist classified by $c_1 \in H^2_{G}(X) \hookrightarrow H^3_{G}(X \times S^1)$.
\end{corollary}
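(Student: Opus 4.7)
The plan is to deduce this corollary as an immediate application of Theorem \ref{thm:main} to the specific T-duality triple exhibited in Example \ref{ex:noflux}. The heavy lifting has already been done, so the task is mainly one of unpacking the data and verifying that nothing extra is required.

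First, I would recall from Example \ref{ex:noflux} that, given a $G$-equivariant principal $S^1$-bundle $E \to X$, the pair $(E, 0)$ (trivial twist on $E$) is $G$-equivariantly T-dual to the pair $(X \times S^1, P)$, where $G$ acts trivially on the $S^1$-factor and $P$ is the $G$-equivariant twist on $X \times S^1$ classified by the image of $c_1(E)$ under the natural inclusion
\[
    H^2_G(X) \hookrightarrow H^3_G(X \times S^1),
\]
which comes, for example, from the Künneth-type decomposition $H^*_G(X \times S^1) \cong H^*_G(X) \oplus H^{*-1}_G(X)$ provided by the trivial $S^1$-factor. The characterization of T-duals via the relations $\pi_!([P]) = c_1(E)$ and $\hat\pi_!([\hat P]) = c_1(\hat E)$ then forces this pairing to be a genuine T-duality triple.

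Next, I would simply apply Theorem \ref{thm:main} to this triple. Since $K^*_G(E,0)$ coincides with the untwisted equivariant K-theory $K^*_G(E)$, the T-duality transformation produces the desired isomorphism
\[
    K^*_G(E) \xrightarrow{\,\cong\,} K^{*-1}_G(X \times S^1, P).
\]
No further computation is required, because Theorem \ref{thm:main} already supplies T-admissibility for all compact Lie groups $G$ and all finite $G$-CW-complexes $X$.

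The only point requiring any care will be the compatibility of the hypotheses: Theorem \ref{thm:main} was stated for finite $G$-CW-complexes, so one should either insert this hypothesis explicitly into the corollary's statement or, if one wishes more generality, appeal to a standard limiting/continuity argument for twisted equivariant K-theory on the class of $G$-spaces under consideration. There is no genuine obstacle; the corollary is essentially a rebranding of the main theorem in the special but representative case of trivial H-flux on one side.
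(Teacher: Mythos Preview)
Your proposal is correct and matches the paper's own approach exactly: the paper simply states that the corollary follows ``by considering the T-dual pairs introduced in Example \ref{ex:noflux}'' and applying Theorem \ref{thm:main}, with no further argument given. Your observation about the finite $G$-CW-complex hypothesis is a valid caveat that the paper leaves implicit.
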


This result also includes the case where the twist comes from the group, by considering twists classified by a class in the image of $H^2_G(*) \to H^2_G(X)$.

We can also use the method of prove which shows that the T-duality
transformation is an isomorphism to identify its inverse:

\begin{corollary}\label{cor:inverse_T_is_T}
The inverse to the T-duality transformation is the dual T-duality transformation.
\end{corollary}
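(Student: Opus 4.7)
The plan is to reuse the diagram-chase from the proof of Theorem~\ref{thm:main}, now reading off from it not only that $T$ is an isomorphism but that the inverse is the dual transformation. Starting from a $G$-equivariant T-duality triple $\bigl((E,P),(\hat E,\hat P),u\bigr)$ over $X$, I would pull the whole diagram back along $f=\pi\circ p\colon E\times_X\hat E\to X$. Since pulling back a principal bundle along itself trivialises it, the result is a trivial T-duality triple in the sense of Definition~\ref{def:trivial}, and Lemma~\ref{lem:trivialtrans} expresses the T-duality transformation for the pulled-back triple as $\id\otimes T_0$ under the K\"unneth splitting, where $T_0$ is the transformation on $K^*(S^1\times S^1)$ induced by the standard Poincar\'e automorphism. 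A direct computation on the natural generators of $K^*(S^1\times S^1)$ shows that $T_0$ is self-inverse; hence $T$ and $T_{\text{dual}}$ coincide on the pulled-back trivial triple, i.e.\ $T_{\text{dual}}\circ T=\id$ there.

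Next, the naturality of $T$ and $T_{\text{dual}}$ extends diagram~\eqref{eqn:proofdiag} to a commutative rectangle whose top row is $T_{\text{dual}}\circ T$ acting on $K^*_G(E,P)$ and whose bottom row is the identity. This already shows $F^*\circ (T_{\text{dual}}\circ T)=F^*$. When $X$ is a point, Lemma~\ref{lem:injpullback} gives the injectivity of $F^*$, so $T_{\text{dual}}\circ T=\id$ follows immediately; by symmetry $T\circ T_{\text{dual}}=\id$ as well, establishing $T_{\text{dual}}=T^{-1}$ on every T-duality triple over a point for every closed subgroup of $G$. This is essentially the last step of the proof of Theorem~\ref{thm:main}, now read as an identification of maps rather than as an invertibility statement.

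Finally, I would propagate the identity $T_{\text{dual}}\circ T=\id$ from points to a general finite $G$-CW-complex $X$ by induction on cells, following the scheme of Theorem~\ref{thm:Tadmissibleiso}. The base case for $X=G/H$ reduces via the induction isomorphism of Lemma~\ref{lemma:induction} to the $H$-equivariant case over a point, already handled above. The inductive step uses the naturality of the self-map $T_{\text{dual}}\circ T$ with respect to the Mayer--Vietoris sequence of a cell attachment, together with the fact that both $T$ and $T_{\text{dual}}$ are already known to be isomorphisms on every piece by Theorem~\ref{thm:main}. The main obstacle, as I see it, is precisely this inductive step: the plain 5-lemma only reproves that $T_{\text{dual}}\circ T$ is an isomorphism, and to upgrade this to the sharper equality $T_{\text{dual}}\circ T=\id$ one must exploit the fact that the natural endomorphism $T_{\text{dual}}\circ T-\id$ vanishes on every piece of the cover and is therefore forced into the boundary contribution of the Mayer--Vietoris sequence, which in turn must vanish by a further application of naturality against the reduction to orbits.
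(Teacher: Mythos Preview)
Your first two paragraphs are essentially the paper's argument: pull back along $f$ to a trivial triple, use Lemma~\ref{lem:trivialtrans} to see $T_{\text{dual}}\circ T=\id$ downstairs, and then use injectivity of $F^*$ to lift this identity upstairs. Over a point this is exactly what the paper does, so that part is fine.

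The gap is in your third paragraph, and you have correctly diagnosed it yourself: a Mayer--Vietoris/five-lemma induction on cells will only tell you that $T_{\text{dual}}\circ T$ is an isomorphism, not that it equals $\id$. Your attempted fix---arguing that the natural endomorphism $\Phi=T_{\text{dual}}\circ T-\id$ is ``forced into the boundary contribution'' and then vanishes---does not go through as stated. Running the long exact sequence with $\Phi=0$ on $U$, $V$, $U\cap V$ shows only that $\Phi(h^*(X))\subseteq\operatorname{im}\partial\subseteq\ker\Phi$, i.e.\ $\Phi^2=0$; it does not give $\Phi=0$. Natural transformations of cohomology theories are \emph{not} in general determined by their values on orbits via this kind of argument, so the handwave at the end is not a proof.

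The paper avoids this issue entirely by a different route: rather than inducting on cells, it upgrades Lemma~\ref{lem:injpullback} to arbitrary base $X$. Once Theorem~\ref{thm:main} is available, the second map $P^*$ in the factorisation~\eqref{injectivecomposition} is the first map of a T-duality transformation that is now \emph{known} to be an isomorphism, hence $P^*$ is injective; the first map $\Pi^*$ is injective as before because $\pi^*E$ is trivial. Thus $F^*$ is injective for every $X$, and the diagram chase of your second paragraph applies verbatim over any $X$, with no cellular induction needed. This is both shorter and sidesteps the obstruction you ran into.
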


\begin{proof}
Revisiting the proof of Theorem \ref{thm:main}, the result follows from two intermediary results:
\begin{enumerate}
    \item The result holds for trivial T-duality triples over any $X$.
    \item The vertical maps in \eqref{eqn:proofdiag} are injective for T-duality diagrams over any $X$.
\end{enumerate}
If both of these are true, then the proof is complete by considering \eqref{eqn:proofdiag}.

For the first statement, we use the computation of Lemma \ref{lem:trivialtrans}
\[
\begin{tikzcd}
    K_G^*(X,Q) \otimes K^*(S^1) 
        \rar["{\cong}"] \dar["{\id \otimes T}"]
    & K_G^*(X \times S^1,\pr_X^*Q)  
        \dar["T"] \\
    K_G^{*}(X,Q) \otimes K^{*-1}(S^1)
        \rar["\cong"]
    & K^{*-1}_G(X \times S^1,\pr_X^*Q)
\end{tikzcd}
\]
On the left-hand side we use graded tensor products. We know that the
T-duality transform is its own inverse (and also equal to the dual T-duality
transform) for the 
basic untwisted case of the circle, so the statement for trivial T-duality
triples in the sense of Definition \ref{def:trivial} follows.

The second result is proved in Bei Liu's thesis \cite{BeiLiu}*{Lemma 3.6}.
The proof is the same as Lemma \eqref{lem:injpullback}, except now we can use
that the T-duality transformation is an isomorphism for all compact Lie
groups.
We once again write the map as the composition \eqref{injectivecomposition}.
The first map is injective because $\pi^*E$ is the trivial bundle.
The second map is injective because it is the first map in the T-duality transformation of the pullback, which we now know is an isomorphism.
Therefore the composition is injective and the proof is complete.
\end{proof}

  The proof of Corollary \ref{cor:inverse_T_is_T} also establishes that the
  T-duality transform itself is characterized axiomatically by a short list of
  natural properties as follows.

\begin{corollary}\label{cor:uniqueness}
  Let $G$ be a compact group.
  Let $T$ be a transformation which for each $G$ equivariant T-duality triple
  $\tau=((E,P),(\hat E,\hat P),u)$ with circle fibers over a space $X$ gives a homomorphism
  \begin{equation*}
   T_\tau\colon K_G^*(E,P)\to K_G^{*-1}(\hat E,\hat P)
 \end{equation*}
 that satisfies the following properties:
 \begin{enumerate}
 \item naturality for pull-backs along maps $f\colon Y\to X$
 \item normalization for trivial twists in the sense of Definition
   \ref{def:trivial}: in this case, under the K\"unneth isomorphism we require a
   commutative diagram
   \begin{equation*}
\begin{tikzcd}
    K_G^*(X,Q) \otimes K^*(S^1) 
        \rar["{\cong}"] \dar["{\id \otimes T}"]
    & K_G^*(X \times S^1,\pr_X^*Q)  
        \dar["T"] \\
    K_G^{*}(X,Q) \otimes K^{*-1}(S^1)
        \rar["\cong"]
    & K^{*-1}_G(X \times S^1,\pr_X^*Q)
\end{tikzcd}     
   \end{equation*}
 \end{enumerate}

     Then $T$ is the T-duality transformation constructed in Sections 
    \ref{sec:T_trafo} and \ref{sec:Ktheory} together with all of its properties (in
    particular, it is an isomorphism which is its own inverse).

\end{corollary}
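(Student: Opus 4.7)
The plan is to compare the given transformation $T$ with the constructed T-duality transformation $T^{\mathrm{constr}}$ from Definition \ref{def:Ttransformation} and show they coincide on every triple. First observe that $T^{\mathrm{constr}}$ itself satisfies both properties (1) and (2): naturality for pullbacks was established in the Proposition following Definition \ref{def:Ttransformation}, while the normalization diagram for trivial triples in the sense of Definition \ref{def:trivial} is exactly the content of Lemma \ref{lem:trivialtrans} combined with the fact that the non-equivariant $T$-duality isomorphism for $S^1 \times S^1$ is standard. Hence it suffices to show that any two transformations satisfying (1) and (2) must coincide.

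The key reduction is the one already used in the proof of Theorem \ref{thm:main}. Given a $G$-equivariant T-duality triple $\tau = ((E,P),(\hat E,\hat P),u)$ over $X$, set $f = \pi \circ p = \hat\pi\circ\hat p \colon E\times_X\hat E \to X$. The pullback $f^*\tau$ is a trivial T-duality triple in the sense of Definition \ref{def:trivial}: both pulled-back circle bundles become trivial because a principal bundle pulled back along itself is trivial, the twist on the base is $p^*P$ (identified with $\hat p^*\hat P$ via $u$), and the twist automorphism is the Poincar\'e automorphism on the $S^1\times S^1$-factor. Applying property (1) to $T$ and separately to $T^{\mathrm{constr}}$ yields, for each of them, a commutative square of the shape \eqref{eqn:proofdiag}. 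Moreover, the bottom horizontal arrows of these two squares agree: both equal the map specified by property (2) applied to the trivial triple $f^*\tau$.

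To conclude, I would invoke the injectivity of the vertical pullback $F^*\colon K_G^*(E,P) \to K_G^*(f^*E,F^*P)$. This injectivity, over an arbitrary base $X$, was already established inside the proof of Corollary \ref{cor:inverse_T_is_T} as a strengthening of Lemma \ref{lem:injpullback} following Bei Liu's thesis, and it crucially uses that the T-duality transformation is already known to be an isomorphism by Theorem \ref{thm:main}. From $F^* \circ T_\tau = F^* \circ T^{\mathrm{constr}}_\tau$ and the injectivity of $F^*$ we deduce $T_\tau = T^{\mathrm{constr}}_\tau$, which is the corollary. The additional statement that $T$ inherits all properties of $T^{\mathrm{constr}}$, in particular being an isomorphism equal to its own inverse, is then immediate.

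The only mildly delicate point, and hence the main potential obstacle, is that the injectivity step rests on the full strength of Theorem \ref{thm:main}; but this theorem is already available to us, so no genuine obstacle remains. Indeed, once one recognizes that the proof of Corollary \ref{cor:inverse_T_is_T} has essentially already carried out the reduction to the trivial case, the uniqueness statement follows formally from the same diagram chase.
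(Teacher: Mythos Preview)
Your proposal is correct and follows essentially the same route as the paper, which in fact gives no separate proof of Corollary \ref{cor:uniqueness} but simply remarks that the argument of Corollary \ref{cor:inverse_T_is_T} already establishes it. You have spelled out precisely that argument: pull back to the trivial triple via $f$, use the normalization to pin down the bottom row, and conclude via the injectivity of $F^*$ (which, as you correctly note, over general $X$ relies on Theorem \ref{thm:main}).
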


\section{Examples}\label{sec:examples}

The T-duality isomorphism can be a useful tool for calculating the K-theory of principal $S^1$-bundles.
We finish the paper with some example calculations.
The first can be considered a worked example of the T-duality isomorphism, while the rest are a result of it. 

\begin{example}
We walk through the case where $G = S^1$ and $X$ is a point since everything can be made explicit in this case.
Let $E_k$ be $S^1$ with the $k$th power $S^1$ action, that is, with the $S^1$-action given by
\[
    S^1 \to S^1 \subseteq \operatorname{Aut}(S^1),
    \quad z \mapsto z^k.
\]
This is the same $E_k$ as Section \ref{sec:ZnTadmiss}, except the $S^1$-action in that section is restricted to a $\Z_n$-action.
Note that $E_k \to *$ is an $S^1$-equivariant principal $S^1$-bundle over a point and that all such bundles are of this form, because
\[
    H^2_{S^1}(*) = H^2(BS^1) \cong \Z,
\]
and $E_k \to *$ is the bundle classified by $k \in \Z$. 
An application of the corresponding Gysin sequence shows that $H^3_{S^1}(E_k)= \Z$ when $k=0$ and $H^3_{S^1}(E_k)=0$ otherwise.
We then have that the pairs $(E_0, P_k)$ and $(E_k, 0)$ are T-dual to each other, where $P_k$ is the twist on $E_0$ corresponding to the integer $k$.
These are all of the possible T-duality triples.
Via the Mayer-Vietoris sequence, the relevant twisted K-groups are
\[
    K^0_{S^1}(E_k) \cong R(\Z_k) \cong K^1_{S^1}(E_0, P_k)
    \quad \text{and} \quad
    K^1_{S^1}(E_k) = 0 = K^0_{S^1}(E_0, P_k).
\]
There are thus only two non-trivial T-duality transformations, 
\[
    K^1_{S^1}(E_0, P_k) \xrightarrow{\,\, T\,\,} K^0_{S^1}(E_k)
    \quad\text{and}\quad
    K^0_{S^1}(E_k) \xrightarrow{\,\, T\,\,} K^1_{S^1}(E_0, P_k).
\]
All of the maps defining these transformations are contained in the following diagram:
\[
\begin{tikzcd}[row sep = 2mm, column sep = 6mm]
    & K^1_{S^1}(E_0 \times E_k, \pr_1^*P_k)
        \arrow[r, "\cong"]
    & K^1_{S^1}(E_0 \times E_k)
        \arrow[dr, "(\pr_2)_!"]
    & \\
    K^1_{S^1}(E_0, P_k)
        \arrow[ru, "\pr_1^*"]
    &&
    & K^0_{S^1}(E_k) 
        \arrow[dl, "\pr_2^*"] \\
    & K^0_{S^1}(E_0 \times E_k, \pr_1^*P_k)
        \arrow[ul, "(\pr_1)_!"]
    & K^0_{S^1}(E_0 \times E_k)
        \arrow[l, "\cong"]
    &
\end{tikzcd}
\]
It turns out that, in this case, each of the individual maps in this diagram is an isomorphism.
The full details are in the author's PhD thesis \cite{Dove:phd}.
\end{example}

\begin{example}
Consider $S^1$ acting on $S^2$ by rotations.
By a standard Mayer-Vietoris argument, $H^2_{S^1}(S^2) \cong \Z^2$.
Let $E_{p,q}$ denote the $S^1$-equivariant principal $S^1$-bundle on $S^2$ classified by $(p,q) \in \Z^2$.
This can be explicitly constructed as
\[
    E_{p,q} = (D^2 \times S^1_p) \cup_{S^1 \times S^1_0} (D^2 \times S^1_q)
     \to D^2 \cup_{S^1} D^2 = S^2,
\]
where $S^1_k$ denotes $S^1$ with the $k$th power action.
The gluing maps come from the fact that $S^1 \times S^1_k$ is $S^1$-equivariantly homeomorphic to $S^1 \times S^1_0$ for all $k \in \Z$.

Another Mayer-Vietoris argument tells us that
\[
    H^3_{S^1}(E_{p,q})
    = 
    \begin{cases}
        \Z^2, & p=q=0,\\ 
        \Z, & p=0, q \neq 0, \\
        \Z, & p\neq0, q = 0, \\
        0,  & \text{otherwise}.
    \end{cases}
\]
If $p=q=0$, then $E_{p,q}$ is the trivial bundle.
Let $P_{k,\ell}$ be the twist on $E_{0,0}$ classified by $(k,\ell) \in \Z^2$.
Then $(E_{0,0}, P_{p,q})$ is T-dual to $(E_{p,q}, 0)$.
This is the situation described in Corollary \ref{cor:noflux}.

Let $P_k$ and $Q_\ell$ denote the twists on $E_{p,0}$ and $E_{0,q}$ classified by $k,l \in \Z$, respectively.
Then $(E_{p,0}, P_q)$ is T-dual to $(E_{0,q}, Q_p)$.
The corresponding T-duality isomorphism is
\[
    K^*_G(E_{p,0}, P_q) \cong K^{*-1}_G(E_{0,q}, Q_p).
\]
\end{example}

\begin{example}
Let $E \to X$ be a (non-equivariant) principal $S^1$-bundle on $X$ with Chern class $c_1 \in H^2(X)$.
The bundle $E^{\otimes k} = E \otimes \dotsm \otimes E$, which is classified by $c_1^k$, has a natural action of the symmetric group $\Sigma_k$.
This makes $E^{\otimes k} \to X$ a $\Sigma_k$-equivariant principal $S^1$-bundle on $X$, where $X$ is given the trivial $\Sigma_k$-action.
Corollary \ref{cor:noflux} implies that
\[
    K^*_{\Sigma_k}(E^{\otimes k}) 
    \cong K^{*-1}_{\Sigma_k}(X \times S^1, P)
    \cong K^{*-1}(X \times S^1, P) \otimes R(\Sigma_k),
\]
where $P$ is a twist classified by the image of $c_1^k$ under 
\[
    H^2(X) 
    \hookrightarrow H^3(X \times S^1) 
    \hookrightarrow H^3_{\Sigma_k}(X \times S^1).
\]
If we further assume that $K^i(X) = 0$ for $i \in \{0,1\}$, then a Mayer-Vietoris argument for $X \times S^1$ reveals that 
\[
    K^{i-1}(X \times S^1, P) \cong K^{i-1}(X)^{L^k}
    \,\, \text{and}
\]
\[
    K^i(X \times S^1, P) \cong K^{i-1}(X) / L^k K^{i-1}(X),
\]
where $L_k$ is the line bundle classified by $c_1^k$ (in other words the line bundle associated with $E^{\otimes k}$). 
So, in this case,
\[
    K^i_{\Sigma_k}(E^{\otimes k})
    \cong
    K^{i-1}(X)^{L^k} \otimes R(\Sigma_k)
    \,\, \text{and}
\]
\[
    K^{i-1}_{\Sigma_k}(E^{\otimes k})
    \cong
    \frac{K^{i-1}(X)}{L^k K^{i-1}(X)} \otimes R(\Sigma_k).
\]
\end{example}


\appendix
\section{Twisted Equivariant Cohomology}\label{appendix}

Here, we introduce an axiomatic definition of equivariant twists and twisted equivariant cohomology, following Bunke and Schick \cite{BunkeSchick05}.
The T-duality transformation and the notion of T-admissibility will be defined for any twisted equivariant cohomology theory satisfying these axioms.

With an axiomatic approach, one does not need to choose a specific model for twists.
In K-theory for example, there are many notions of twists, including principal $PU(\H)$-bundles, Hitchin gerbes, bundle gerbes, and \v Cech cocycles.
It can be difficult to translate between these directly.
Instead of choosing a specific one, we rely on a set of axioms that are sufficient to prove the results we need.
The same is true for twisted equivariant cohomology. 
Even though our main focus is K-theory, we are leaving room for other twisted cohomology theories to have a T-duality isomorphism.

\subsection{Equivariant Twists}\label{sec:equitwists}

\noindent A model for $G$-equivariant twists consists of a presheaf of monoidal groupoids 
\[
    (X,G) \mapsto \twist_G(X)
\]
on the category of spaces with group action.
This must satisfy the following properties:
\begin{enumerate}

    \item There is a natural monoidal transformation 
    \[
        \twist_G(X) \to H^3_G(X;\Z), 
        \quad P \mapsto [P],
    \]
    where the Borel cohomology group $H^3_G(X;\Z) := H^3(X \times_G EG ;\Z)$ is viewed as a monoidal category with only identity morphisms.
    This transformation classifies the isomorphism classes of $\twist_G(X)$.

    \item There is a natural ``Borel construction'', meaning there is a monoidal transformation
    \[
        \twist_G(X) \to \twist(X \times_G EG).
        \quad P \mapsto P \times_G EG
    \]
    This transformation is natural with respect to the classification by
    $H^3_G(X; \Z)$.
  \item Let $H$ be a subgroup of $G$. There must be (as part of the
    structure) a restriction transformation of presheaves $\twist_G(X)\to
    \twist_H(X)$ compatible with the restriction functor from the category of
    $G$-spaces to the category of $H$-spaces.

    \item Let $H$ is a subgroup of $G$ and $X$ a $H$-space. 
    The canonical $H$-map $X \to X \times_H G|_H$ induces a natural equivalence of groupoids
    \begin{align*}
        \twist_G(X \times_H G) \to\twist_H(X\times_H G|)\to  \twist_H(X).
    \end{align*}
    The inverse will be denoted $P \mapsto \Ind_H^G(P)$.
    
    \item (Gluing twists) Let $X = U \cup V$ where $U$ and $V$ are $G$-invariant subspaces, $P_U \in \twist_G(U)$, $P_V \in \twist_G(V)$ and $u \colon P_U|_{U \cap V} \cong P_V|_{U \cap V}$ an isomorphism.
    Then, there exists $P \in \twist_G(X)$ such that there are isomorphisms $\psi_U \colon P|_U \cong P_U$ and $\psi_V \colon P|_V \cong P_V$ satisfying $u = \psi_V \circ \psi_U^{-1}$.
    Moreover, this $P$ is unique up to isomorphism.

\end{enumerate}

The first four axioms are used to prove that T-admissibility implies that the T-duality transformation is an isomorphism.
The gluing axiom is used to prove Proposition \ref{prop:twistiso}, which allowed us to prove that $G$-equivariant pairs over $X$ are T-dual if and only if their Borel constructions are T-dual over $X \times_G EG$.

\begin{example}
Our twists of choice when working with twisted K-theory are stable equivariant $PU(\H)$-bundles.
We expand on these twists in \ref{princpuh}.
\end{example}

\begin{example}
In \cite{TuXuLG}, the authors define twisted K-theory for differentiable stacks.
The twists they use are $S^1$-central extensions over groupoids.
So, $S^1$-central extensions over groupoid representatives of the global quotient stack $X /\!\!/ G$ are a model of equivariant twists.
\end{example}

\begin{example}
A Hitchin gerbe for a space $X$ is an open cover $\{U_i\}$ together with a collection of line bundles $L_{ij} \to U_{i} \cap U_j$ and isomorphisms $\delta_{ijk} \colon L_{ij} \otimes L_{jk}  \to L_{ik}$ satisfying a cocycle condition, see \cite{Hitchin:gerbes}.
By considering equivariant line bundles, we obtain the notion of equivariant Hitchin gerbes.
These can be made quite explicit and so are useful in calculations, see for example \cite{FHTtwistedktheory}*{\textsection 1}.
\end{example}

\subsection{Axioms for Twisted Equivariant Cohomology}

Fix a model of twists $\twist_{G}(X)$.
A twisted equivariant cohomology theory consists of, for each $n \in \Z$, a functor 
\[
    (X,G,P) \mapsto h^n_G(X,P)
\]
on the category of spaces equipped with a group action and equivariant twist. 
In addition to functoriality with respect to spaces, groups, and twists, these functors must satisfy the following axioms:

\begin{enumerate}

    \item (Homotopy invariance) If $f \colon X \to Y$ and $g \colon X \to Y$
      are equivariantly homotopic, then
    \[
        g^* = u^* \circ f^* \colon h_G(Y,P) \to h_G(X,g^*P)
    \]
    for some isomorphism $u \colon f^*P \cong g^*P$.

    \item (Push-forward) If $p \colon Y \to X$ is a $G$-equivariant $h_G$-oriented map, then there is an integration map
    \[
        p_! \colon h^n_G(Y,p^*P) \to h^{n-d}_G(X, P).
    \] 
    This map has a degree shift of $d := \dim X - \dim Y$ and is natural with
    respect to pullbacks. For products with the circle, it implements the
    suspension isomorphism, compare Corollary \ref{cor:suspension}.
    \item (Induction) If $H \subseteq G$ is a subgroup, then there is a natural isomorphism
    \[
        h^n_H\bigl(E, P\bigr) 
        \cong 
        h^n_G\bigl(E \times_H G, \Ind_H^G(P)\bigr).
    \]

    \item (Mayer-Vietoris) If $X = U \cup V$ is a decomposition of $X$ into two open $G$-sets, then there is a natural long exact sequence:

\vspace{\baselineskip}
\noindent\makebox[\textwidth]{%
    \begin{tikzpicture}

        \def \rsep {2.3};
        \def \csep {-0.75};

        \node (a) at (-0.5,0)
            {$\dotsm \to h^{n-1}_G(U \cap V, P|_{U \cap V}\bigr)$};
        \node (b) at (\rsep,\csep)
            {$h^n_G(X, P)$};
        \node (c) at (2*\rsep, 0)
            {$h^n_G(U,P|_U) \oplus h^n_G(V,P|_U)$};
        \node (d) at (3*\rsep+1, \csep)
            {$h^n_G(U \cap V, P|_{U \cap V}\bigr) \to \dotsm $};
        
        \draw[->, out=320, in=180]
            ([shift={(1,0)}]a.south) to (b.west);
        \draw[->, out=0, in=230]
            (b.east) to ([shift={(-0.5,0)}]c.south);
        \draw[->, out=320, in=180]
            ([shift={(0.5,0)}]c.south) to (d.west);

    \end{tikzpicture}
}
\end{enumerate}

\begin{corollary}\label{cor:suspension}
  As usual, a consequence of Mayer-Vietoris and homotopy invariance is the
  suspension isomorphism which we formulate in the following way
  (incorporating also a condition on the push-forward maps): There is a natural
  split exact sequence
  \begin{equation*}
    0\to h^n_G(X,P)\xrightarrow{\pr_X^*} h^n_G(X\times S^1, \pr_X^*P)
    \xrightarrow{p_!} h^{n-1}(X,P)\to 0,
  \end{equation*}
  where $G$ acts trivially on the $S^1$-factor.
\end{corollary}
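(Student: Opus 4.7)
The plan is to derive the sequence as a direct consequence of the Mayer--Vietoris and homotopy invariance axioms, with the axiomatic behavior of $p_!$ doing the rest of the work.

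First I would cover $X \times S^1$ with two $G$-invariant open sets $U = X \times U_1$ and $V = X \times V_1$, where $U_1, V_1 \subset S^1$ are two open arcs whose union is $S^1$ and whose intersection $U_1 \cap V_1$ is the disjoint union of two open arcs. Because $U_1$ and $V_1$ are each contractible and each component of $U_1 \cap V_1$ is likewise contractible (with trivial $G$-action in each case), the projections induce $G$-equivariant homotopy equivalences $U \simeq X$, $V \simeq X$, and $U \cap V \simeq X \sqcup X$. Since $\pr_X^* P$ restricts on each piece to a twist canonically identified with $P$ via homotopy invariance, the twisted cohomology groups of the pieces are identified with copies of $h^*_G(X, P)$.

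Next I would feed these identifications into the Mayer--Vietoris sequence. A standard check shows that under these homotopy identifications, the restriction map $h^n_G(U, \pr_X^*P|_U) \oplus h^n_G(V, \pr_X^*P|_V) \to h^n_G(U \cap V, \pr_X^*P|_{U \cap V})$ takes the form
\begin{equation*}
\phi_n \colon h^n_G(X, P) \oplus h^n_G(X, P) \to h^n_G(X, P) \oplus h^n_G(X, P), \quad (a, b) \mapsto (a - b, a - b).
\end{equation*}
Since $\ker(\phi_n)$ is the diagonal, canonically isomorphic to $h^n_G(X, P)$, and $\coker(\phi_{n-1})$ is canonically isomorphic to $h^{n-1}_G(X, P)$ via $[(a, b)] \mapsto a - b$, the Mayer--Vietoris long exact sequence yields a short exact sequence
\begin{equation*}
0 \to h^{n-1}_G(X, P) \xrightarrow{\tilde \delta} h^n_G(X \times S^1, \pr_X^*P) \xrightarrow{\tilde r} h^n_G(X, P) \to 0.
\end{equation*}
Here $\tilde r$ is induced by restriction to $U$ (equivalently $V$), which under the homotopy equivalence $U \simeq X$ given by $\pr_X$ is inverse to $\pr_X^*$; hence $\pr_X^* \colon h^n_G(X, P) \to h^n_G(X \times S^1, \pr_X^*P)$ is a section of $\tilde r$. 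This both provides the desired splitting and identifies the image of $\pr_X^*$ with the sub-$h^n_G(X,P)$ of the extension.

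It remains to match the quotient map $h^n_G(X \times S^1, \pr_X^*P) \to h^{n-1}_G(X, P)$ obtained from the splitting with $p_!$. This is precisely the compatibility packaged into the push-forward axiom, which states that for products with $S^1$ the push-forward realizes the suspension isomorphism; concretely, one requires $p_!$ to induce the inverse of $\tilde \delta$ on the cokernel of $\pr_X^*$. The main obstacle is thus bookkeeping: verifying that the Mayer--Vietoris restriction map truly takes the displayed form (with the correct signs) and matching the resulting connecting homomorphism with the axiomatically postulated $p_!$; neither step introduces genuinely new ideas. Naturality of the whole sequence then follows from the naturality of Mayer--Vietoris, of $\pr_X^*$, and of $p_!$.
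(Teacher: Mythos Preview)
The paper does not actually supply a proof of this corollary; it is stated as a routine consequence of the axioms (Mayer--Vietoris, homotopy invariance, and the stipulation in the push-forward axiom that $p_!$ ``implements the suspension isomorphism'' for products with $S^1$). Your argument is exactly the standard one the authors have in mind: decompose $S^1$ into two arcs, identify the pieces with copies of $X$ via homotopy invariance, read off the kernel and cokernel of the restriction map, and then invoke the axiomatic requirement on $p_!$ to name the quotient map. Nothing is missing, and the level of detail you give is appropriate for what the paper leaves implicit.
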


\begin{example}
Given a twisted non-equivariant cohomology theory, one can obtain a twisted equivariant cohomology theory via the Borel construction, that is,
\[
    h_G(X,P) := h(X \times_G EG, P).
\]
A particular example of this is twisted Borel K-theory.
\end{example}

\begin{example}
We have discussed in Section \ref{sec:Ktheory} the Thom isomorphism in twisted equivariant K-theory and hence have a push-forward along K-oriented maps. 
Therefore twisted equivariant K-theory is a twisted equivariant cohomology theory.
\end{example}

\subsection{Principal \texorpdfstring{$PU(\H)$}{PU(H)}-bundles}
\label{princpuh}

\noindent In this paper, we use stable equivariant principal $PU(\H)$-bundles in our definition of twisted equivariant K-theory.
Here, we provide the definition and their basic properties. 

\begin{definition}
Let $X$ be a $G$-space. 
A $G$-equivariant principal $PU(\H)$-bundle on $X$ is a principal $PU(\H)$-bundle $P \to X$ together with an action of $G$ on $P$ such that
\[
    \pi( p\cdot g) =  \pi(p)\cdot g
    \quad \text{and} \quad
     (p \cdot u)\cdot g = (p \cdot g) \cdot u,
\]
where $g \in G$, $p \in P$, and $u \in PU(\H)$. Note that we use right actions
for the group and for the principal bundle.
\end{definition}

\begin{definition}
A stable homomorphism $f \colon G \to PU(\H)$ is a homomorphism such that for
the induced central extension $  1\to S^1 \to \widetilde G\to G\to 1$
defined by $\widetilde G:= f^*U(\H)$ 
the induced homomorphism $\widetilde f\colon \widetilde G \to U(\H)$ contains all of the irreducible representations of $\widetilde G$ where the central $S^1$ acts via scalar multiplication, countably infinitely many times.
\end{definition}

A homomorphism $G \to PU(\H)$ is the same as a $G$-equivariant principal $PU(\H)$-bundle over a point.
A motivation for considering stable homomorphisms comes from the bijection \[
    \operatorname{Hom}_{st}\bigl( G, PU(\H) \bigr) / PU(\H)
    \longleftrightarrow
    \operatorname{Ext}(G,S^1)
\]
between stable homomorphisms up to conjugation and $S^1$-central extensions of $G$ \cite{BEJU:universaltwist}*{Prop 1.6}.
In other words, we want $G$-equivariant twists over a point to correspond to $S^1$-extensions of $G$ and this is only true if we restrict to the stable homomorphisms.

A further motivation is that the twisted $G$-equivariant K-theory of a point should be the ring of ``twisted'' representations of $G$ corresponding to the twist, that is, representations of the resulting central extension $\widetilde G$ such that the central $S^1$ acts by scalar multiplication. This is not guaranteed if the twist is not stable, see \cite{BEJU:universaltwist}*{\textsection 4.3.4} and \cite{LuckUribe}*{\textsection 15.2}.

We extend this now to the notion of stable equivariant principal $PU(\H)$-bundle.
These are, roughly speaking, $G$-equivariant principal bundles where, locally,
isotropy groups  act via stable homomorphisms.
This definition comes from \cite{BEJU:universaltwist}*{Def 2.2}.

\begin{definition}
A stable $G$-equivariant principal bundle $\pi \colon P \to X$ is one such
that for each $x \in X$ (with isotropy group $G_x$) there exists a $G$-neighbourhood $V$ of $x$ and a $G_x$-contractible slice $U$ of $x$ such that $V \cong U \times_{G_x} G$ together with a local trivialisation
\[
    P|_V \cong (PU(\H) \times U) \times_{G_x} G,
\]
where $G_x$ acts on the $PU(\H)$-factor via a stable homomorphism.
\end{definition}

Let us show that stable equivariant principal $PU(\H)$-bundles satisfy the twist axioms we have presented.
The first result states that stable equivariant principal bundles satisfy Axioms 1 and 2.

\begin{proposition}
There is a bijection between the isomorphism classes of stable $G$-equivariant principal $PU(\H)$-bundles on $X$ and $H^3_G(X;\Z)$.
This classification factors through the Borel construction
\[
    \operatorname{Proj}_G(X) 
    \to
    \operatorname{Proj}(X \times_G EG)
    \cong H^3_G(X).
\]
\end{proposition}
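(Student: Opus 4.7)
The plan is to split the assertion into two separate bijections. First, I will establish the non-equivariant classification $\operatorname{Proj}(Y) \cong H^3(Y;\Z)$ for a reasonable space $Y$, and then apply it with $Y = X \times_G EG$. Second, I will show that the Borel construction induces a bijection $\operatorname{Proj}_G(X) \to \operatorname{Proj}(X \times_G EG)$ on isomorphism classes of stable equivariant bundles. The composition is then the classifying map in the statement.

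For the first bijection, I would invoke the classical fact that $BPU(\H)$ is an Eilenberg--MacLane space $K(\Z,3)$. This follows from the short exact sequence $1 \to S^1 \to U(\H) \to PU(\H) \to 1$ together with the contractibility of $U(\H)$ by Kuiper's theorem, which gives a fiber sequence $BS^1 \to BU(\H) \to BPU(\H)$ with $BU(\H)$ contractible, hence $BPU(\H) \simeq BBS^1 \simeq K(\Z,3)$. Therefore principal $PU(\H)$-bundles on $Y$ are classified by $[Y, K(\Z,3)] = H^3(Y;\Z)$. Applied to $Y = X \times_G EG$, this gives the desired $\operatorname{Proj}(X \times_G EG) \cong H^3_G(X;\Z)$.

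The main obstacle is the second bijection, namely that the Borel construction
\[
    P \longmapsto P \times_G EG
\]
is a bijection on isomorphism classes \emph{when we restrict to stable bundles}. Injectivity is not hard: a $G$-equivariant isomorphism of bundles on $X$ descends to an isomorphism of Borel constructions, and the converse uses equivariance and the fact that $EG$ is free and contractible, so an isomorphism of Borel constructions can be equivariantly lifted up to homotopy. Surjectivity is the real difficulty: given a principal $PU(\H)$-bundle $Q$ on $X \times_G EG$, we must produce a stable $G$-equivariant bundle $P$ on $X$ with $P \times_G EG \cong Q$. The stability hypothesis is exactly what rules out the obstructions; without it, one can cook up equivariant bundles that are not detected by the Borel cohomology. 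I would either cite \cite{BEJU:universaltwist} directly (they construct a universal stable equivariant $PU(\H)$-bundle and prove the classification), or sketch the following argument: choose a local model using the slice theorem so that $X$ is covered by $G$-invariant tubes $U \times_{G_x} G$, use the bijection between stable homomorphisms $G_x \to PU(\H)$ modulo conjugation and $S^1$-central extensions of $G_x$ (from \cite{BEJU:universaltwist}*{Prop 1.6}) to lift fiberwise, and then glue using the gluing axiom (Axiom 5) for twists, which holds by Proposition~\ref{prop:twistiso}-type arguments applied to the principal bundle setting.

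Finally, I would verify that the composite $\operatorname{Proj}_G(X) \to \operatorname{Proj}(X \times_G EG) \to H^3_G(X;\Z)$ is indeed the natural classifying map $P \mapsto [P]$, which is immediate from the definition of the latter via the Borel construction. This establishes the factorisation and completes the proof.
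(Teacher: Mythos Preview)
The paper's own proof is nothing more than a citation: ``See \cite{AtiyahSegal:TKT}*{Prop 6.3} or, in the case that $G$ is discrete, in \cite{BEJU:universaltwist}*{Theorem 3.8}.'' Your proposal ultimately rests on the same references (you explicitly offer citing \cite{BEJU:universaltwist} as one route), so at the level of what is actually being invoked the two are the same.

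Where you go further is in sketching an argument rather than only citing. That sketch is broadly correct in shape, but two steps are softer than you present them. First, your injectivity claim---that an isomorphism of Borel constructions ``can be equivariantly lifted up to homotopy'' because $EG$ is free and contractible---is precisely the step where stability is needed, not just a formality; without stability there are non-isomorphic equivariant bundles with isomorphic Borel constructions, so you should flag that this direction also uses the stability hypothesis, not only surjectivity. Second, your appeal to Proposition~\ref{prop:twistiso} for the gluing step is misplaced: that proposition is about twists on the sphere bundle $S(E,\hat E)$ in the T-duality setup, not about gluing local trivialisations of principal bundles over a slice cover. The gluing you need is the ordinary one for equivariant principal bundles (your Axiom~5), and there is no need to invoke Proposition~\ref{prop:twistiso}. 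Since you already list direct citation of \cite{BEJU:universaltwist} as an option, these are minor issues of presentation rather than genuine gaps.
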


\begin{proof}
See \cite{AtiyahSegal:TKT}*{Prop 6.3} or, in the case that $G$ is discrete, in \cite{BEJU:universaltwist}*{Theorem 3.8}.
\end{proof}

The next results proves that stable equivariant $PU(\H)$-bundles have an induction isomorphism, giving us Axiom 3.

\begin{proposition}\label{prop:inductionPUbundles}
If $P \to X$ is a $H$-equivariant stable $PU(\H)$-principal bundle then $P \times_H G \to X \times_H G$ is a $G$-equivariant stable $PU(\H)$-principal bundle.
Therefore, the assignment $P \mapsto P \times_H G$ induces a bijection between the $H$-equivariant stable bundles on $X$ and the $G$-equivariant stable bundles on $X \times_H G$.
\end{proposition}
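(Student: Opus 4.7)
The plan is to produce an inverse to the induction construction via restriction, and to check that stability is preserved in both directions. The bijectivity on isomorphism classes will then be immediate, and is compatible with the classification by $H^3_H(X;\Z) \cong H^3_G(X\times_H G;\Z)$ established in the previous proposition.

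First I would show that $P\times_H G \to X\times_H G$ is indeed a $G$-equivariant principal $PU(\H)$-bundle. This is largely formal: the $PU(\H)$-action on $P$ commutes with the $H$-action on the first factor and with the $G$-action on the second, so it descends to $P \times_H G$ and is compatible with the new $G$-action; local triviality is inherited by pushing $H$-saturated local trivializations of $P$ forward along $X\to X\times_H G$.

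The main content is the preservation of the stability condition. Given $x\in X$ with $H$-isotropy $H_x$, the $G$-isotropy of $[x,e]\in X\times_H G$ is again $H_x$, and any $H_x$-contractible slice $U$ at $x$ for the $H$-action is also an $H_x$-contractible slice at $[x,e]$ for the $G$-action, with ambient $G$-neighborhood $V:=U\times_{H_x}G$. Using the transitivity identity
\[
    ((PU(\H)\times U)\times_{H_x}H)\times_H G \cong (PU(\H)\times U)\times_{H_x}G,
\]
the local stable trivialization of $P$ at $x$ transports directly to a local stable trivialization of $P\times_H G$ at $[x,e]$, with $H_x$ acting via the same stable homomorphism $H_x\to PU(\H)$. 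A general point $[x,g]$ is handled by translating with $g^{-1}\in G$ and invoking $G$-equivariance. The inverse construction is restriction along the $H$-equivariant inclusion $\iota\colon X\hookrightarrow X\times_H G$, $x\mapsto[x,e]$: for a $G$-equivariant stable bundle $Q\to X\times_H G$, the restriction $\iota^*Q$ is $H$-equivariant, and its stability at $x$ is obtained by restricting the stable local form of $Q$ at $[x,e]$ (whose $G$-isotropy is already $H_x$). The canonical identifications $P\cong (P\times_H G)|_{\iota(X)}$ via $p\mapsto[p,e]$, and $Q\cong (\iota^*Q)\times_H G$ via $[q,g]\mapsto q\cdot g$, are $H$- and $G$-equivariant bundle isomorphisms respecting stability, so the two constructions are mutually inverse.

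The main obstacle is the local verification in the previous paragraph: one must correctly identify the isotropy groups and slices on $X\times_H G$ and chase through the fiber-product identities to transport the stable trivializations. Once this bookkeeping is carried out, the rest of the argument reduces to the formal equivalence between $H$-spaces and $G$-spaces over $G/H$ provided by induction and restriction, extended to carry along the additional principal-bundle structure.
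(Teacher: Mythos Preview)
Your proposal is correct and takes essentially the same approach as the paper: the core of both arguments is the transitivity identity $((PU(\H)\times U)\times_{H_x}H)\times_H G \cong (PU(\H)\times U)\times_{H_x}G$, which transports the stable local trivialization through the induction. The only difference is that the paper's proof treats a general point $[x,g]$ directly (rather than reducing to $[x,e]$ and translating) and stops after verifying stability of $P\times_H G$, leaving the bijectivity claim implicit; your explicit construction of the inverse via restriction along $\iota$ and verification that the two are mutually inverse is a welcome addition that the paper omits.
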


\begin{proof}
It is clear enough that $P \times_H G \to X \times_H G$ is a $G$-equivariant $PU(\H)$-principal bundle; we only show that it is stable.
Consider $[x,g] \in X \times_H G$.
The isotropy group at $[x,g]$ is $H_x$ and this does not depend on the choice of $x$ in the coset $[x,g]$.
We must prove that there exists an open neighbourhood $V$ of $[x,g]$ and a slice $U$ such that 
\[
    V \cong U \times_{H_x} G 
    \quad \text{and} \quad
    (P \times_H G)|_V \cong (PU(\H) \times U) \times_{H_x} G, 
\]
where $H_x$ acts on $PU(\H)$ via a stable homomorphism.
Since $P \to X$ is stable, there exists an open neighbourhood $V'$ of $x$ and a slice $U'$ such that
\[
    V' \cong U' \times_{H_x} H
    \quad \text{and} \quad 
    P|_{V'} \cong (PU(\H) \times U') \times_{H_x} H.
\]
Let $V = V' \times_H G$ and let $U$ be the image of $U$ in $X \times_H G$ under the injection $y \mapsto [y,g]$.
Then $U$ is a slice at $[x,g]$ with
\[
    V 
    = V' \times_H G 
    \cong (U' \times_{H_x} H) \times_H G 
    \cong U' \times_{H_x} G 
    \cong U \times_{H_x} G, 
\]
and so
\begin{align*}
    (P \times_H G)|_V 
    &= P|_{V'} \times_H G  \\
    &\cong (PU(\H) \times U') \times_{H_x} H \times_H G  \\
    &\cong (PU(\H) \times U') \times_{H_x} G  \\
    &\cong (PU(\H) \times U ) \times_{H_x} G.
\end{align*}
Therefore, $P \times_H G$ is a $G$-equivariant stable principal $PU(\H)$-bundle.
\end{proof}

For Axiom 4, we use that bundles can be glued together and that stability is a local property, so that the gluing of two stable equivariant bundles is again stable.
The outcome of this discussion is the following:

\begin{proposition}
Stable equivariant principal $PU(\H)$-bundles satisfy the twist axioms introduced in Section \ref{sec:equitwists}.
\end{proposition}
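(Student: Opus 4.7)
The plan is to assemble the verification axiom by axiom, leaning on the propositions that have already been established in this subsection. Axioms~1 and~2, classification by $H^3_G(X;\Z)$ and naturality with respect to the Borel construction, are precisely the content of the first proposition in \ref{princpuh}, so they are immediate: the classifying map is the one sending a stable $G$-equivariant $PU(\H)$-bundle to the class of its Borel construction in $H^3(X\times_G EG;\Z)$, and the monoidal structure is given by the tensor product of bundles induced from the multiplication $PU(\H)\times PU(\H)\to PU(\H\otimes \H)$ followed by a stabilisation identification $\H\otimes\H\cong\H$.

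For Axiom~3, a $G$-equivariant stable principal $PU(\H)$-bundle $P\to X$ automatically carries an action of any closed subgroup $H\subseteq G$ by restriction. The only point to check is that the stability condition is inherited: given $x\in X$, the $G$-slice $V\cong U\times_{G_x}G$ restricts to an $H$-neighbourhood $V\cap (U\cdot H)\cong U\times_{H_x}H$ of the $H$-orbit (using $H_x=G_x\cap H$), and the local trivialisation $P|_V\cong (PU(\H)\times U)\times_{G_x}G$ restricts to the analogous local trivialisation on which $H_x\subseteq G_x$ acts via the same homomorphism to $PU(\H)$, whose restriction to $H_x$ is still stable (every stable homomorphism stays stable when restricted to a subgroup, because a $\widetilde G$-representation containing countably many copies of each $\widetilde G$-irreducible still contains countably many copies of every $\widetilde H$-irreducible, by Frobenius reciprocity for compact groups). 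Axiom~4 is exactly Proposition~\ref{prop:inductionPUbundles}, together with the observation that the inverse $\Ind_H^G$ is $P\mapsto P\times_H G$, and that this construction is monoidal and compatible with classification by $H^3$.

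For Axiom~5, the gluing axiom, I would argue as follows. Given $G$-invariant $U,V\subseteq X$ with $X=U\cup V$, equivariant stable bundles $P_U,P_V$ and an equivariant isomorphism $u\colon P_U|_{U\cap V}\to P_V|_{U\cap V}$, one forms the quotient $P:=(P_U\sqcup P_V)/\!\sim$ where the equivalence identifies $p\in P_U|_{U\cap V}$ with $u(p)\in P_V|_{U\cap V}$. This is a $G$-space, the projection to $X$ is a principal $PU(\H)$-bundle (check on the open cover), and the isomorphisms $\psi_U,\psi_V$ are the tautological ones. The stability condition is local around each point, so it is inherited from $P_U$ or $P_V$. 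Uniqueness up to isomorphism follows because any other $P'$ with the same data admits a compatible pair of trivialisations $P'|_U\cong P_U$, $P'|_V\cong P_V$ that match on the overlap via~$u$, so the gluing construction recovers it uniquely up to isomorphism.

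The main obstacle I expect is not any single axiom but the careful bookkeeping for Axiom~3: one must confirm that the chosen $G$-slice really does restrict to an $H$-slice and, crucially, that a stable homomorphism $G_x\to PU(\H)$ restricts to a stable homomorphism $H_x\to PU(\H)$. This is the one place where the definition of stability is subtle, and it is worth recording the Frobenius-reciprocity argument explicitly. All the other axioms are essentially a matter of naming the construction and citing the preceding propositions.
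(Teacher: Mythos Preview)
Your proposal is correct and follows the paper's approach: cite the classification proposition for Axioms~1--2, cite Proposition~\ref{prop:inductionPUbundles} for the induction axiom, and observe that stability is local for the gluing axiom. You actually go further than the paper, which glosses over the restriction axiom entirely (note the paper's discussion labels induction as ``Axiom~3'' and gluing as ``Axiom~4'', apparently skipping restriction); your Frobenius-reciprocity argument that stable homomorphisms remain stable upon restriction to a closed subgroup is the right way to fill this gap.
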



\section{Declarations}

The authors declare that all the data supporting the findings of this study are
available within the paper.

The authors have no competing interests to declare that are relevant to the content of this article.
All authors certify that they have no affiliations with or involvement in any organization or entity with any financial interest or non-financial interest in the subject matter or materials discussed in this manuscript.
The authors have no financial or proprietary interests in any material
discussed in this article.

Indirectly, the authors hope to gain reputation in the scientific community
via the publication of this article. This might influence decisions of
current or future employers which have financial implications for the authors.


\begin{bibdiv}
\begin{biblist}*{labels={alphabetic}}

\bib{AtiyahSegal:TKT}{article}{
   author={Atiyah, Michael},
   author={Segal, Graeme},
   title={Twisted $K$-theory},
   journal={Ukr. Mat. Visn.},
   volume={1},
   date={2004},
   number={3},
   pages={287--330},
   issn={1810-3200},
   translation={
      journal={Ukr. Math. Bull.},
      volume={1},
      date={2004},
      number={3},
      pages={291--334},
      issn={1812-3309},
   },
   review={\MR{2172633}},
}

\bib{Baraglia:circle}{article}{
    Author = {Baraglia, David},
    Title = {Topological {T}-duality for general circle bundles},
    Journal = {Pure Appl. Math. Q.},
    ISSN = {1558-8599},
    Volume = {10},
    Number = {3},
    Pages = {367--438},
    Year = {2014},
    Language = {English},
    DOI = {10.4310/PAMQ.2014.v10.n3.a1},
}

\bib{Baraglia:torus}{article}{
    Author = {Baraglia, David},
    Title = {Topological {{\(T\)}}-duality for torus bundles with monodromy},
    Journal = {Rev. Math. Phys.},
    ISSN = {0129-055X},
    Volume = {27},
    Number = {3},
    Pages = {55},
    Note = {Id/No 1550008},
    Year = {2015},
    Language = {English},
    DOI = {10.1142/S0129055X15500087},
}

\bib{BEJU:universaltwist}{article}{
   author={B\'{a}rcenas, No\'{e}},
   author={Espinoza, Jes\'{u}s},
   author={Joachim, Michael},
   author={Uribe, Bernardo},
   title={Universal twist in equivariant $K$-theory for proper and discrete
   actions},
   journal={Proc. Lond. Math. Soc. (3)},
   volume={108},
   date={2014},
   number={5},
   pages={1313--1350},
   issn={0024-6115},
   doi={10.1112/plms/pdt061},
}

\bib{Blackadar}{book}{
 Author = {Blackadar, Bruce},
 Title = {{{\(K\)}}-theory for operator algebras.},
 Edition = {2nd ed.},
 ISBN = {0-521-63532-2},
 Year = {1998},
 Publisher = {Cambridge: Cambridge University Press},
 Language = {English},
}

\bib{BEM}{article}{
   author={Bouwknegt, Peter},
   author = {Evslin, Jarah},
   author = {Mathai, Varghese},
   title={$T$-duality: topology change from $H$-flux},
   journal={Comm. Math. Phys.},
   volume={249},
   date={2004},
   number={2},
   pages={383--415},
   issn={0010-3616},
   doi={10.1007/s00220-004-1115-6},
}
\bib{BM}{article}{
   author={Bouwknegt, Peter},
   author={Mathai, Varghese},
   title={D-branes, $B$-fields and twisted $K$-theory},
   journal={J. High Energy Phys.},
   date={2000},
   number={3},
   pages={Paper 7, 11},
   issn={1126-6708},
   review={\MR{1756434}},
   doi={10.1088/1126-6708/2000/03/007},
}
\bib{BunkeSchick05}{article}{
   author={Bunke, Ulrich},
   author={Schick, Thomas},
   title={On the topology of $T$-duality},
   journal={Rev. Math. Phys.},
   volume={17},
   date={2005},
   number={1},
   pages={77--112},
   issn={0129-055X},
   doi={10.1142/S0129055X05002315},
}

\bib{BunkeSchicknonfree}{article}{
    author={Bunke, Ulrich},
    author={Schick, Thomas},
    title={$T$-duality for non-free circle actions},
    conference={
        title={Analysis, geometry and topology of elliptic operators},
    },
    book={
        publisher={World Sci. Publ., Hackensack, NJ},
    },
    date={2006},
    pages={429--466},
}

\bib{BunkeRumpfSchick}{article}{
    AUTHOR = {Bunke, Ulrich},
    AUTHOR = {Rumpf, Philipp},
    AUTHOR = {Schick, Thomas},
    TITLE = {The topology of {$T$}-duality for {$T^n$}-bundles},
    JOURNAL = {Rev. Math. Phys.},
    VOLUME = {18},
    YEAR = {2006},
    NUMBER = {10},
    PAGES = {1103--1154},
    ISSN = {0129-055X},
    DOI = {10.1142/S0129055X06002875},
    URL = {https://doi.org/10.1142/S0129055X06002875},
}

\bib{BunkeSchickSpitzweck}{article}{
   author={Bunke, Ulrich},
   author={Schick, Thomas},
   author={Spitzweck, Markus},
   title={Periodic twisted cohomology and $T$-duality},
   language={English, with English and French summaries},
   journal={Ast\'erisque},
   number={337},
   date={2011},
   pages={vi+134},
   issn={0303-1179},
   isbn={978-2-85629-307-2},
}

\bib{Buscher87}{article}{
  title={A symmetry of the string background field equations},
  author={Buscher, Thomas Henry},
  journal={Physics Letters B},
  volume={194},
  number={1},
  pages={59--62},
  year={1987},
  publisher={Elsevier}
}

\bib{CareyWang:ThomisoTKT}{article}{
    Author = {Carey, Alan L.},
    Author = {Wang, Bai-Ling},
    Title = {Thom isomorphism and push-forward map in twisted {K}-theory},
    Journal = {J. \(K\)-Theory},
    ISSN = {1865-2433},
    Volume = {1},
    Number = {2},
    Pages = {357--393},
    Year = {2008},
    Language = {English},
    DOI = {10.1017/is007011015jkt011},
}

\bib{DonovanKaroubi}{article}{
   author={Donovan, P.},
   author={Karoubi, M.},
   title={Graded Brauer groups and $K$-theory with local coefficients},
   journal={Inst. Hautes \'{E}tudes Sci. Publ. Math.},
   number={38},
   date={1970},
   pages={5--25},
   issn={0073-8301},
}

\bib{Dove:phd}{thesis}{
    author={Dove, Tom},
    title = {Twisted Equivariant K-Theory and Equivariant T-Duality},
    year = {2023},
    type = {PhD Thesis},
    organization = {University of G\"ottingen},
    note = {\href{http://dx.doi.org/10.53846/goediss-10100}{http://dx.doi.org/10.53846/goediss-10100}}
}

\bib{DoveSchick:newapproach}{article}{
    author={Dove, Tom},
    author={Schick, Thomas},
    title={A new approach to topological T-duality for principal torus
      bundles},
    note={\href{https://arxiv.org/abs/2104.05984}{arXiv:2104.05984v2}. Accepted for publication in Reviews in
      Mathematical Physics},
    date={2024}
}

\bib{DSV:decomp}{article}{
   author={Dove, Tom},
   author={Schick, Thomas},
   author={Vel\'{a}squez, Mario},
   title={A fixed point decomposition of twisted equivariant K-theory},
   journal={Proc. Amer. Math. Soc.},
   volume={151},
   date={2023},
   number={11},
   pages={4593--4606},
   issn={0002-9939},
   doi={10.1090/proc/16491},
}
\bib{FHTtwistedktheory}{article}{
   author={Freed, Daniel S.},
   author={Hopkins, Michael J.},
   author={Teleman, Constantin},
   title={Loop groups and twisted $K$-theory I},
   journal={J. Topol.},
   volume={4},
   date={2011},
   number={4},
   pages={737--798},
   issn={1753-8416},
   doi={10.1112/jtopol/jtr019},
}

\bib{Garvey:thesis}{thesis}{
    author = {Garvey, Zachary J.},
    title = {On the Thom Isomorphism for Groupoid-Equivariant Representable K-Theory},
    year = {2022},
    type = {PhD Thesis},
    organization = {Dartmouth College}
}

\bib{Hitchin:gerbes}{article}{
    Author = {Hitchin, Nigel},
    Title = {Lectures on special {Lagrangian} submanifolds.},
    BookTitle = {Winter school on mirror symmetry, vector bundles and Lagrangian submanifolds. Proceedings of the winter school on mirror symmetry, Cambridge, MA, USA, January 1999},
    ISBN = {0-8218-2159-8},
    Pages = {151--182},
    Year = {2001},
    Publisher = {Providence, RI: American Mathematical Society (AMS); Somerville, MA: International Press},
    Language = {English},
}
\bib{Kapustin}{article}{
   author={Kapustin, Anton},
   title={D-branes in a topologically nontrivial $B$-field},
   journal={Adv. Theor. Math. Phys.},
   volume={4},
   date={2000},
   number={1},
   pages={127--154},
   issn={1095-0761},
   review={\MR{1807598}},
   doi={10.4310/ATMP.2000.v4.n1.a3},
}

\bib{Karoubi:oldandnew}{article}{
   author={Karoubi, Max},
   title={Twisted $K$-theory---old and new},
   conference={
      title={$K$-theory and noncommutative geometry},
   },
   book={
      series={EMS Ser. Congr. Rep.},
      publisher={Eur. Math. Soc., Z\"{u}rich},
   },
   date={2008},
   pages={117--149},
   doi={10.4171/060-1/5},
}
\bib{Lahtinen}{article}{
   author={Lahtinen, Anssi},
   title={The Atiyah-Segal completion theorem in twisted $K$-theory},
   journal={Algebr. Geom. Topol.},
   volume={12},
   date={2012},
   number={4},
   pages={1925--1940},
   issn={1472-2747},
   doi={10.2140/agt.2012.12.1925},
}
\bib{LeGall}{article}{
    Author = {Le Gall, Pierre-Yves},
    Title = {Equivariant {Kasparov} theory and groupoids. {I}},
    Journal = {\(K\)-Theory},
    ISSN = {0920-3036},
    Volume = {16},
    Number = {4},
    Pages = {361--390},
    Year = {1999},
    Language = {French},
    DOI = {10.1023/A:1007707525423}
}

\bib{LinshawMathai}{article}{
    author={Linshaw, Andrew},
    author = {Mathai, Varghese},
    title={T-duality of singular spacetime compactifications in an H-flux},
    journal={J. Geom. Phys.},
    volume={129},
    date={2018},
    pages={269--278},
    issn={0393-0440},
    doi={10.1016/j.geomphys.2018.03.017},
 }

\bib{BeiLiu}{thesis}{
    author = {Liu, Bei},
    title = {Uniqueness of T-Isomorphism and Topological T-Duality for Circle Actions},
    year = {2014},
    type = {PhD Thesis},
    organization = {University of G\"ottingen}
}
\bib{LuckUribe}{article}{
  title={Equivariant principal bundles and their classifying spaces},
  author={L{\"u}ck, Wolfgang},
  author = {Uribe, Bernardo},
  journal={Algebraic \& Geometric Topology},
  volume={14},
  number={4},
  pages={1925--1995},
  year={2014},
  publisher={Mathematical Sciences Publishers}
}

\bib{MathaiRosenberg:Tdualitytorus}{article}{
    Author = {Mathai, Varghese},
    Author = {Rosenberg, Jonathan},
    Title = {{{\(T\)}}-duality for torus bundles with {{\(H\)}}-fluxes via noncommutative topology},
    Journal = {Commun. Math. Phys.},
    ISSN = {0010-3616},
    Volume = {253},
    Number = {3},
    Pages = {705--721},
    Year = {2005},
    Language = {English},
    DOI = {10.1007/s00220-004-1159-7},
}
\bib{MathaiWu}{article}{
   author={Mathai, Varghese},
   author={Wu, Siye},
   title={Topology and flux of T-dual manifolds with circle actions},
   journal={Comm. Math. Phys.},
   volume={316},
   date={2012},
   number={1},
   pages={279--286},
   issn={0010-3616},
   doi={10.1007/s00220-012-1542-8},
}
	
\bib{McClure:restriction}{article}{
    Author = {McClure, James E.},
    Title = {Restriction maps in equivariant K-theory},
    Journal = {Topology},
    ISSN = {0040-9383},
    Volume = {25},
    Pages = {399--409},
    Year = {1986},
    Language = {English},
    DOI = {10.1016/0040-9383(86)90019-4},
}

\bib{Meinrenken}{article}{
   author={Meinrenken, Eckhard},
   title={On the quantization of conjugacy classes},
   journal={Enseign. Math. (2)},
   volume={55},
   date={2009},
   number={1-2},
   pages={33--75},
   issn={0013-8584},
   doi={10.4171/lem/55-1-2},
}
\bib{MinasianMoore}{article}{
   author={Minasian, Ruben},
   author={Moore, Gregory},
   title={$K$-theory and Ramond-Ramond charge},
   journal={J. High Energy Phys.},
   date={1997},
   number={11},
   pages={Paper 2, 7},
   issn={1126-6708},
   review={\MR{1606278}},
   doi={10.1088/1126-6708/1997/11/002},
}
\bib{Moutuou}{article}{
    author = {Moutuou, El-ka\"ioum},
    year = {2013},
    title = {Equivariant KK-theory for generalised actions and Thom isomorphism in groupoid twisted K-theory},
    volume = {13},
    journal = {Journal of K-Theory},
    doi = {10.1017/is013010018jkt244}
}

\bib{Pande18}{article}{
    author={Pande, Ashwin S.},
    title={Topological T-duality for stacks using a Gysin sequence},
    journal={Adv. Theor. Math. Phys.},
    volume={22},
    date={2018},
    number={6},
    pages={1535--1591},
    issn={1095-0761},
    doi={10.4310/ATMP.2018.v22.n6.a5},
}

\bib{Minami}{article}{,
   author = {Minami, Haruo},
   title = {{A K\"unneth formula for equivariant $K$-theory}},
   volume = {6},
   journal = {Osaka Journal of Mathematics},
   number = {1},
   publisher = {Osaka University and Osaka Metropolitan University, Departments of Mathematics},
   pages = {143 -- 146},
   year = {1969},
   doi = {ojm/1200692333},
   URL = {https://doi.org/}
}

\bib{Rosenberg:TKT}{article}{
   author={Rosenberg, Jonathan},
   title={Continuous-trace algebras from the bundle theoretic point of view},
   journal={J. Austral. Math. Soc. Ser. A},
   volume={47},
   date={1989},
   number={3},
   pages={368--381},
   issn={0263-6115},
}

\bib{Segal:EKT}{article}{
 Author = {Segal, Graeme},
 Title = {Equivariant {K}-theory},
 Journal = {Publ. Math., Inst. Hautes {\'E}tud. Sci.},
 ISSN = {0073-8301},
 Volume = {34},
 Pages = {129--151},
 Year = {1968},
 Language = {English},
 DOI = {10.1007/BF02684593},
}
\bib{SYZ}{article}{
   author={Strominger, Andrew},
   author={Yau, Shing-Tung},
   author={Zaslow, Eric},
   title={Mirror symmetry is $T$-duality},
   journal={Nuclear Phys. B},
   volume={479},
   date={1996},
   number={1-2},
   pages={243--259},
   issn={0550-3213},
   review={\MR{1429831}},
   doi={10.1016/0550-3213(96)00434-8},
}
	
\bib{TuXuLG}{article}{
    Author = {Tu, Jean-Louis},
    Author = {Xu, Ping},
    Author = {Laurent-Gengoux, Camille},
    Title = {Twisted {{\(K\)}}-theory of differentiable stacks},
    Journal = {Ann. Sci. {\'E}c. Norm. Sup{\'e}r. (4)},
    ISSN = {0012-9593},
    Volume = {37},
    Number = {6},
    Pages = {841--910},
    Year = {2004},
    Language = {English},
    DOI = {10.1016/j.ansens.2004.10.002},
}

\bib{Uuye:restriction}{article}{
    Author = {Uuye, Otgonbayar},
    Title = {Restriction maps in equivariant {{\(KK\)}}-theory},
    Journal = {J. \(K\)-Theory},
    ISSN = {1865-2433},
    Volume = {9},
    Number = {1},
    Pages = {45--55},
    Year = {2012},
    Language = {English},
    DOI = {10.1017/is011010005jkt168},
}

\bib{Witten:DbranesKtheory}{article}{
   author={Witten, Edward},
   title={D-branes and $K$-theory},
   journal={J. High Energy Phys.},
   date={1998},
   number={12},
   pages={Paper 19, 41},
   issn={1126-6708},
   doi={10.1088/1126-6708/1998/12/019},
}

\end{biblist}
\end{bibdiv}

\end{document}